\numberwithin{equation}{section}
\newtheorem{Theorem}{Theorem}[section]
\newtheorem{Corollary}[Theorem]{Corollary}
\newtheorem{Lemma}[Theorem]{Lemma}
\newtheorem{Proposition}[Theorem]{Proposition}
\newtheorem{Conjecture}[Theorem]{Conjecture}
 { \theoremstyle{definition}
\newtheorem{Definition}[Theorem]{Definition}
\newtheorem{Example}[Theorem]{Example}
\newtheorem{Remark}[Theorem]{Remark} }
\newcommand{\Z}{\mathbb{Z}}
\newcommand{\g}{\mathfrak{g}}
\newcommand{\h}{\mathfrak{h}}
\begin{document}

\allowdisplaybreaks

\newcommand{\arXivNumber}{1909.13588}

\renewcommand{\thefootnote}{}

\renewcommand{\PaperNumber}{014}

\FirstPageHeading

\ShortArticleName{Short Star-Products for Filtered Quantizations, I}

\ArticleName{Short Star-Products for Filtered Quantizations, I\footnote{This paper is a~contribution to the Special Issue on Algebra, Topology, and Dynamics in Interaction in honor of Dmitry Fuchs. The full collection is available at \href{https://www.emis.de/journals/SIGMA/Fuchs.html}{https://www.emis.de/journals/SIGMA/Fuchs.html}}}

\Author{Pavel ETINGOF and Douglas STRYKER}
\AuthorNameForHeading{P.~Etingof and D.~Stryker}
\Address{Department of Mathematics, MIT, Cambridge, MA 02139, USA}
\Email{\href{mailto:etingof@math.mit.edu}{etingof@math.mit.edu}, \href{mailto:stryker@mit.edu}{stryker@mit.edu}}

\ArticleDates{Received October 01, 2019, in final form March 01, 2020; Published online March 11, 2020}

\Abstract{We develop a theory of short star-products for filtered quantizations of graded Poisson algebras, introduced in 2016 by Beem, Peelaers and Rastelli for algebras of regular functions on hyperK\"ahler cones in the context of 3-dimensional $N=4$ superconformal field theories [Beem C., Peelaers W., Rastelli L., \textit{Comm. Math. Phys.} \textbf{354} (2017), 345--392]. This appears to be a new structure in representation theory, which is an algebraic incarnation of the non-holomorphic ${\rm SU}(2)$-symmetry of such cones. Using the technique of twisted traces on quantizations (an idea due to Kontsevich), we prove the conjecture by Beem, Peelaers and Rastelli that short star-products depend on finitely many parameters (under a natural nondegeneracy condition), and also construct these star products in a number of examples, confirming another conjecture by Beem, Peelaers and Rastelli.}

\Keywords{star-product; quantization; hyperK\"ahler cone; symplectic singularity}

\Classification{06B15; 53D55}

\rightline{\it To Dmitry Borisovich Fuchs on his 80th birthday with admiration}

\renewcommand{\thefootnote}{\arabic{footnote}}
\setcounter{footnote}{0}

\section{Introduction}

The goal of this paper is to develop a theory of short star-products for filtered quantizations of graded Poisson algebras, introduced in 2016 by Beem, Peelaers and Rastelli for algebras of regular functions on hyperK\"ahler cones in the context of 3-dimensional $N=4$ superconformal field theories~\cite{BPR}.

Namely, let $A$ be a commutative ${\mathbb Z}_{\ge 0}$-graded ${\mathbb C}$-algebra with a Poisson bracket $\lbrace\,,\,\rbrace$ of deg\-ree~$-2$, and let~$*$ be a star-product on $A$ quantizing $\lbrace\,,\,\rbrace$. This means that~$*$ is an associative product on~$A$ defined by
\begin{gather*}
a*b=ab+C_1(a,b)+C_2(a,b)+\cdots,\qquad a,b\in A,
\end{gather*}
where $C_k\colon A\otimes A\to A$ are bilinear maps of degree $-2k$ such that $C_1(a,b)-C_1(b,a)=\lbrace a,b\rbrace$.\footnote{Thus, we set the Planck constant $\hbar$ to be~$1$, which can be done without loss of generality because of the grading on $A$.} For degree reasons, for any homogeneous $a,b\in A$, one has $C_k(a,b)=0$ for $k>\frac{\deg(a)+\deg(b)}{2}$, so the sum makes sense.

Star-products arise naturally from filtered quantizations of $A$. Namely, let $d\colon A\to A$ be the degree operator, i.e., $d(a)=ma$ for $a\in A_m$, and let $s=(-1)^d\colon A\to A$ be the operator such that $s(a)=(-1)^ma$ for $a\in A_m$.\footnote{As customary in physical literature, we will abuse notation by writing $d$ both for the degree of an element of~$A$ and the degree operator $A\to A$ whose eigenvalues are such degrees.} Then $s$ is a Poisson automorphism of~$A$ defining an action of ${\mathbb Z}/2$ on $A$. Let ${{\mathbf A}}$ be a ${\mathbb Z}/2$-equivariant filtered quantization of $A$, and let $\phi\colon A\to {{\mathbf A}}$ be a ${\mathbb Z}/2$-equivariant quantization map, i.e., a ${\mathbb Z}/2$-equivariant filtration-preserving linear map such that $\operatorname{gr}\phi={\rm id}$ (it attaches to a ``classical observable'' $a\in A$ the corresponding ``quantum observable'' $\widehat{a}:=\phi(a)$). Then the formula
\begin{gather*}
a*b:=\phi^{-1}(\phi(a)\phi(b))
\end{gather*}
defines a star-product on $A$.

Conversely, any star-product on $A$ defines a filtered quantization of $A$ realized as a new product on the same vector space, and this is a traditional method of quantizing Poisson algebras. For example, the well known Moyal--Weyl star-product (Example~\ref{MV} below) gives a quantization of the standard $2n$-dimensional classical phase space which goes back to the early years of quantum mechanics.

This classical story has recently had an exciting new development. Namely, motivated by 3-dimensional N=4 superconformal field theories, in 2016 Beem, Peelaers and Rastelli~\cite{BPR} considered a special class of star-products which we call {\it short}, i.e., such that for any homogeneous $a,b\in A$ one has $C_k(a,b)=0$ for any $k>\min(\deg(a),\deg(b))$. In other words, the degree of any term in $a*b$ is at least $|\deg(a)-\deg(b)|$ (in~\cite{BPR} this is called {\it the truncation condition}). Beem, Peelaers and Rastelli showed how short star-products appear naturally in 3-dimensional N=4 superconformal field theories, defining filtered quantizations of the corresponding Higgs and Coulomb branches $X$, and conjectured that any {\it even} filtered quantization of $A={\mathcal O}(X)$ (i.e., a ${\mathbb Z}/2$-equivariant quantization ${{\mathbf A}}$ equipped with a filtered antiautomorphism $\sigma\colon {{\mathbf A}}\to {{\mathbf A}}$ such that $\operatorname{gr}(\sigma)={\rm i}^d$ and $\sigma^2=s$) admits an {\it even} short star-product (i.e., such that $C_k(a,b)=(-1)^kC_k(b,a)$). Moreover, they predicted that such star-products depend on finitely many parameters. Finally, they conjectured the existence of an even short star-product satisfying a positivity condition; namely, this condition should hold for the star-product arising from the physical theory.

There are several examples (given in \cite{BPR}) in which this conjecture is easy to prove, as the even short star-product is uniquely determined by the symmetry of the situation. For instance, if $V$ is a finite dimensional symplectic vector space and $A=\mathcal{O}(V)$ then the symmetric Moyal--Weyl star-product on $A$ is short and even. Since this star-product is ${\rm Sp}(V)$-invariant, for any finite subgroup $G\subset {\rm Sp}(V)$ this gives an even short star-product on ${\mathcal O}(V/G)=\mathcal{O}(V)^G$. Also, if $X\subset \mathfrak g^*$ is the closure of the minimal nilpotent orbit of a simple Lie algebra $\mathfrak g$, then any filtered (even) quantization of $\mathcal{O}(X)$ (with degrees multiplied by $2$) may be defined by a $\g$-invariant (even) star-product (see \cite{L1}), which is short for representation-theoretic reasons.\footnote{The quantization of the minimal orbit is the quotient of ${\rm U}(\g)$ by the so-called Joseph ideal. This quantization (and ideal) is unique if $\g\ne \mathfrak{sl}_n$ and depends on one parameter $\lambda$ for $\g= \mathfrak{sl}_n$. The even quantization is unique except $\g=\mathfrak{sl}_2$, where all quantizations are even. E.g., for $\g=\mathfrak{sl}_n$ with $n\ge 3$, the quantization is even if and only if $\lambda=0$.}

In particular, this applies to the $A_1$-singularity, which corresponds to the case of $\mathfrak g=\mathfrak{sl}_2$. Namely, in this case $X$ is the usual quadratic cone in~${\mathbb C}^3$, so $\mathcal O(X)=\oplus_{m\in 2{\mathbb Z}_{\ge 0}}V_m$, where $V_m$ is the irreducible representation of ${\rm SU}(2)$ with highest weight $m$, and the shortness property comes from the fact that if~$V_k$ is contained
in $V_m\otimes V_n$ then $k\ge |m-n|$ (the Clebsch--Gordan rule). In fact, as explained in~\cite{BPR}, the same mechanism, but for the non-holomorphic ${\rm SU}(2)$-symmetry of the hyperK\"ahler cone acting by rotations on its 2-sphere of complex structures, is responsible for appearance of short star-products in 3-dimensional $N=4$ superconformal field theories.

However, the shortness condition for a star-product is a very strong restriction, and from the purely mathematical viewpoint the existence of short star-products for a given quantization is far from obvious. In general it is not even clear if there exists any short star-product at all. In fact, the existence of a~short star-product is already non-obvious for quantizations of Kleinian singularities beyond the~$A_1$ case, although some low-degree calculations of~\cite{BPR} for~$A_2$ and~$A_3$ showed that the existence of positive even short star-products (for even quantizations) is highly plausible, and this was confirmed later in \cite{DFPY1, DPY} for~$A_n$ for all~$n$. Also, examples of short star-products for $D_n$ singularities are constructed in \cite[Section~5.2.1]{DFPY2}.

The goal of this paper is to partially prove the conjectures from~\cite{BPR}.
Namely, let us say that a short star-product $*$ is {\it nondegenerate}
if the bilinear form $(a,b)={\rm CT}(a*b)$ on $A$ (where ${\rm CT}$ stands for the constant term)
 is nondegenerate. This is a natural condition from the physics point of view; e.g.,
it follows from the positivity condition, saying that the Hermitian version of this form is positive definite. Our first main result is that nondegenerate short star-products for hyperK\"ahler cones depend on finitely many parameters, confirming a conjecture from~\cite{BPR}.
The proof is based on the idea of Kontsevich to express nondegenerate short star-products in terms of {\it nondegenerate twisted traces}, which can then be understood as elements of a certain zeroth Hochschild homology group, as well as a number of results on symplectic singularities~\cite{Ka,L2,Na3}, hyperK\"ahler manifolds~\cite{HKLR} and Poisson homology~\cite{ES}.

Our second main result is the existence and classification of nondegenerate short star-products in a number of examples of hyperK\"ahler cones, such as quotient singularities (where the quantizations are spherical symplectic reflection algebras) and nilpotent cones of simple Lie algebras (where the quantizations are quotients of the enveloping algebra by a central character). We also explain how short star-products on ${\mathcal O}(X)$ arise from representation theory of its quantizations.

To summarize, short star-products appear to be a new structure in representation theory, which is, in a sense, an algebraic incarnation of the non-holomorphic ${\rm SU}(2)$-symmetry of a~hyperK\"ahler cone.
While much of representation theory is the study of quantizations of hyperK\"ahler cones, this symmetry has not been studied much by representation theorists, perhaps because it is not algebraic (as in the context of representation theory, hyperK\"ahler cones are usually treated just as algebraic symplectic varieties). However, the theory of short star-products suggests that the full hyperK\"ahler structure and the associated non-holomorphic ${\rm SU}(2)$ sym\-met\-ry are also quite relevant and worth taking into account in representation theory, as pointed out in \cite[footnote on p.~3]{BPR}. The goal of this paper is to provide the tools to do so, by connecting short star-products with more traditional and well understood objects in representation theory (such as characters of representations and Hochschild homology).

The organization of the paper is as follows. Section~\ref{section2} contains preliminaries on short star-products, filtered quantizations, quantization maps, the evenness condition, and also some basics about symplectic singularities and hyperK\"ahler cones. In Section~\ref{section3} we introduce nondegenerate twisted traces, connect them to nondegenerate short star-products, and show that they depend on a finite number of parameters, establishing our first main result. We also discuss Hermitian and quaternionic structures on short star-product quantizations, which are needed to study their positivity properties. Finally, in Section~\ref{section4} we consider a number of examples of hyperK\"ahler cones and construct and classify nondegenerate short star-products for them, establishing our second main result. In particular, we establish rationality of reduced characters of Verma modules (with arbitrary insertions) in category $\mathcal{O}$ defined in \cite{BLPW,L4}.

In the second part of this work, joint with Eric Rains, we plan to extend the existence and classification of nondegenerate short star-products to a larger class of hyperK\"ahler cones (including Nakajima quiver varieties) and discuss the relation with representation theory in more depth. We also plan to give some explicit computations of short star-products for type A singularities, connecting this work with the computational results of \cite{BPR,DPY}, and study the orthogonal polynomials that arise from them. Finally, we plan to study the positivity properties of short star-products for type A singularities and in general, fulfilling the program of~\cite{BPR}.

{\bf Dedication} ({\sl from Pavel Etingof}). It is my great pleasure to dedicate this paper to my teacher Dmitry Borisovich Fuchs on his 80th birthday. I have fond memories of his topology lessons from over 30 years ago. Dmitry Borisovich taught me to appreciate the beauty of interaction between different fields of mathematics, which I enjoyed to a full extent while working on this paper.

\section{Preliminaries}\label{section2}

\subsection{Star-product quantization of a graded Poisson algebra}
Let $A$ be a commutative ${\mathbb C}$-algebra with a $\Z_{\geq 0}$ grading given by
\[
A = \bigoplus_{d \geq 0} A_d.
\]
Let $\{~,~\}$ be a Poisson bracket of degree $-2$ on $A$; namely,
\[
\{~,~\} \colon \ A_n \otimes A_m \to A_{n+m-2}.
\]
A basic example is the symmetric algebra $A=SV$ of a finite dimensional symplectic vector space $V$
with the Poisson bracket defined by the symplectic form.

Note that this setting includes the case of graded Poisson algebras with Poisson bracket of degree $-1$ (such as the symmetric algebra of a Lie algebra, the algebra of regular functions on the nilpotent cone, etc.), by multiplying the degrees by $2$.\footnote{In fact, this theory can be developed for Poisson brackets of any negative degree, but for most applications it is sufficient to consider degree $-2$, so we restrict our attention to this case.}

\begin{Definition}
A \emph{star-product} on $A$ quantizing $\lbrace\,,\,\rbrace$ is an associative multiplication operation
\[
*\colon \ A \otimes A \to A
\]
such that for $a \in A_n$ and $b \in A_m$,
\[
a*b = \sum_{k = 0}^{\lfloor \frac{n+m}{2}\rfloor} C_k(a,b),
\]
where $C_k \colon A_n \otimes A_m \to A_{n+m-2k}$ are bilinear maps such that $C_0(a,b) = a b$ and
\begin{gather*}
C_1(a,b) - C_1(b,a) = \{a,b\}.
\end{gather*}
\end{Definition}

\begin{Definition} A star-product $*$ on $A$ is \emph{short} if
for any $m,n\in {\mathbb Z}_{\ge 0}$ and any $a \in A_n$, $b \in A_m$ one has $C_k(a,b) = 0$ for all $k > \min(n,m)$. In other words, $a * b$ has no terms in $A_d$ for $d < |n - m|$.
\end{Definition}

From now on assume that $A_0={\mathbb C}$ and $\dim A_i<\infty$ for all $i$. We define the following (in general, non-symmetric) inner product on $A$ induced by the star-product. Let $a \in A_n$ and $b \in A_m$. Then
\[
\langle a, b \rangle := \text{CT}(a * b),
\]
where $\text{CT}\colon A \to A_0$ is the constant term map, taking the term of degree zero. Note that if $*$ is short, then $\langle a, b \rangle = 0$ if $n \neq m$, i.e., the decomposition $A=\oplus_{d\ge 0}A_d$ is orthogonal.

\begin{Definition} A short star-product $*$ on $A$ is said to be {\it nondegenerate} if the corresponding inner product $\langle \,,\,\rangle$ is nondegenerate in each degree~$i$.
\end{Definition}

\subsection{Construction of star-products from a quantum algebra}\label{constru} Let $A$ be a ${\mathbb Z}_{\ge 0}$-graded algebra. Fix a filtered associative algebra
\[
{{\mathbf A}} = \bigcup_{d \geq 0} F_d{{\mathbf A}}
\]
such that its associated graded algebra is identified with the graded algebra $A$; namely,
\[
\operatorname{gr}{{\mathbf A}} = \bigoplus_{d \geq 0} F_{d}{{\mathbf A}}/F_{d-1}{{\mathbf A}}
\]
and $F_{d}{{\mathbf A}}/F_{d-1}{{\mathbf A}} \cong A_d$ for $d \geq 0$ (compatibly with multiplication) with $F_{-1}{{\mathbf A}} := 0$.
Assume that we have a filtration preserving ${\mathbb Z}/2$-action $s\colon {{\mathbf A}}\to {{\mathbf A}}$ such that the corresponding associated graded map $s\colon A\to A$ is given by $s=(-1)^d$, where $d\colon A\to A$ is the degree operator. In this case, it is easy to see that $[F_n{{\mathbf A}},F_m{{\mathbf A}}]\subset F_{n+m-2}{{\mathbf A}}$, so we have the leading coefficient map $\lbrace\,,\,\rbrace\colon A_n\otimes A_m\to A_{n+m-2}$. It is easy to check that $\lbrace\,,\,\rbrace$ is a Poisson bracket on $A$, and ${{\mathbf A}}$ is a~${\mathbb Z}/2$-equivariant filtered quantization of $\lbrace\,,\,\rbrace$.\footnote{Since all quantizations we will consider will be ${\mathbb Z}/2$-equivariant, we will not mention it explicitly from now on.}

\begin{Definition}A \emph{quantization map} is a ${\mathbb Z}/2$-equivariant isomorphism of vector spaces
\[
\phi \colon \ A \to {{\mathbf A}}
\]
such that $\phi(A_d) \subseteq F_d{{\mathbf A}}$ for $d\ge 0$ and $\operatorname{gr} \phi \colon A \to A$ is the identity map, where
\[
\operatorname{gr} \phi = \bigoplus_{d\geq 0} \operatorname{gr} \phi_d, \qquad \operatorname{gr} \phi_d \colon \ A_d \to F_d{{\mathbf A}}/F_{d-1}{{\mathbf A}}=A_d.
\]
\end{Definition}
Note that $\phi$ need not (and usually cannot) be a homomorphism of algebras, since the algebra~$A$ is commutative but ${{\mathbf A}}$ is not in general.

Given a quantization map $\phi\colon A \to {{\mathbf A}}$, we can define the star-product on $A$ by
\[
a * b := \phi^{-1}(\phi(a)\phi(b)).
\]
Conversely, given a star-product on $A$, we can set ${{\mathbf A}}:=(A,*)$ with the filtration induced by the grading, and $\phi:={\rm Id}$.

\begin{Proposition} This defines a pair of mutually inverse bijections between star-products on~$A$ and ${\mathbb Z}/2$-equivariant quantizations of $A$ equipped with a quantization map.
\end{Proposition}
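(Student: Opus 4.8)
The plan is to unwind both constructions, verify that each produces an object of the stated kind, and then check the two round-trips. Here ``a quantization equipped with a quantization map'' is to be taken up to the evident isomorphism (a ${\mathbb Z}/2$-equivariant filtered algebra isomorphism $\psi\colon{{\mathbf A}}\to{{\mathbf A}}'$ with $\operatorname{gr}\psi={\rm id}$ intertwining the quantization maps); with that convention the two assignments to be shown inverse are $*\mapsto\big((A,*),{\rm Id}\big)$, where $(A,*)$ carries the filtration $F_d=\bigoplus_{e\le d}A_e$, and $({{\mathbf A}},\phi)\mapsto\big(a*b:=\phi^{-1}(\phi(a)\phi(b))\big)$.

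First I would show that $a*b:=\phi^{-1}(\phi(a)\phi(b))$ is a star-product. For $a\in A_n$, $b\in A_m$ we have $\phi(a)\phi(b)\in F_{n+m}{{\mathbf A}}$, so $a*b\in\bigoplus_{e\le n+m}A_e$ and the expansion is finite. The key point is that ${\mathbb Z}/2$-equivariance of $\phi$ together with the fact that $s$ is an algebra automorphism of ${{\mathbf A}}$ forces $s(a*b)=s(a)*s(b)=(-1)^{n+m}(a*b)$, so $a*b$ lies in the $(-1)^{n+m}$-eigenspace of $s=(-1)^d$; combined with the filtration bound this yields precisely $a*b=\sum_{k\ge 0}C_k(a,b)$ with $C_k(a,b)\in A_{n+m-2k}$. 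I would then identify $C_0$ and $C_1$ by passing to associated graded. Since $\operatorname{gr}\phi={\rm id}$, the symbol of $\phi(a)\phi(b)$ in $F_{n+m}/F_{n+m-1}=A_{n+m}$ is the product $ab$, so $C_0(a,b)=ab$; and the symbol of $[\phi(a),\phi(b)]\in F_{n+m-2}{{\mathbf A}}$ in $A_{n+m-2}$ is $\{a,b\}$, by the very definition of the induced Poisson bracket in Section~\ref{constru}, while it also equals $C_1(a,b)-C_1(b,a)$; hence $C_1(a,b)-C_1(b,a)=\{a,b\}$. Associativity of $*$ is immediate from associativity in ${{\mathbf A}}$ and bijectivity of $\phi$.

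Next I would check the reverse construction. The bounds $C_k\colon A_n\otimes A_m\to A_{n+m-2k}$ give $F_n\cdot F_m\subseteq F_{n+m}$, so $(A,*)$ is a filtered algebra, and $C_0(a,b)=ab$ identifies $\operatorname{gr}(A,*)$ with $A$ as graded algebras; the operator $s=(-1)^d$ preserves the filtration and is an algebra automorphism, since $s(C_k(a,b))=(-1)^{n+m-2k}C_k(a,b)=(-1)^{\deg a+\deg b}C_k(a,b)=s(a)*s(b)$, its associated graded being $(-1)^d$; and the Poisson bracket it induces is the leading term $C_1(a,b)-C_1(b,a)=\{a,b\}$ of the commutator. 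Finally ${\rm Id}\colon A\to(A,*)$ is visibly ${\mathbb Z}/2$-equivariant, sends $A_d$ into $F_d$, and satisfies $\operatorname{gr}({\rm Id})={\rm id}$, so it is a quantization map.

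For the round-trips: starting from a star-product, the induced star-product is ${\rm Id}^{-1}({\rm Id}(a)*{\rm Id}(b))=a*b$, so this composite is literally the identity; and starting from $({{\mathbf A}},\phi)$ with induced star-product $*$, the defining identity $\phi(a*b)=\phi(a)\phi(b)$ shows $\phi\colon(A,*)\to{{\mathbf A}}$ is an algebra homomorphism, which is bijective, filtration-preserving, a filtered isomorphism because $\operatorname{gr}\phi={\rm id}$ is an isomorphism (equivalently, by a dimension count in each filtered piece using $\dim A_i<\infty$), ${\mathbb Z}/2$-equivariant, and satisfies $\phi\circ{\rm Id}=\phi$; hence $\phi$ is an isomorphism of pairs $\big((A,*),{\rm Id}\big)\xrightarrow{\ \sim\ }({{\mathbf A}},\phi)$. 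I expect the only point needing genuine care to be the use of ${\mathbb Z}/2$-equivariance to upgrade the filtration jumps from $1$ to $2$, so as to match the definition of a star-product; the rest is a routine manipulation of filtrations and symbols, together with the bookkeeping convention that the quantizations-with-quantization-map are classified up to isomorphism.
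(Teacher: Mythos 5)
Your proof is correct and fills in, in the natural way, exactly the routine verification the paper dismisses as ``Straightforward.'' The one place you flag as needing care---using ${\mathbb Z}/2$-equivariance of $\phi$ together with $s$ being an algebra automorphism to force the filtration jumps to be even, so that the expansion lands in the required form $\sum_k C_k(a,b)$ with $C_k(a,b)\in A_{n+m-2k}$---is indeed the only non-trivial point, and you handle it correctly; likewise your identification of $C_0$ and $C_1$ via $\operatorname{gr}\phi=\operatorname{id}$ and the leading-term definition of the Poisson bracket is exactly right. Your reading of the statement as a bijection up to the evident isomorphism of pairs is also the correct (and necessary) interpretation, since the second round-trip produces $((A,*),\operatorname{Id})$ rather than literally $({\mathbf A},\phi)$, and you exhibit $\phi$ itself as the identifying isomorphism.
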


\begin{proof} Straightforward.
\end{proof}

\subsection{Even star-products}

\begin{Definition} A star-product $*$ on $A$ is said to be {\it even} if $C_k(a,b)=(-1)^k C_k(b,a)$ for all $k\ge 0$.
\end{Definition}

In particular, for even star-products $C_1$ is skew-symmetric, so since $C_1(a,b)-C_1(b,a)=\lbrace a,b\rbrace$, we have $C_1(a,b)=\frac{1}{2}\lbrace a,b\rbrace$.

Let $*$ be an even star-product on $A$ and ${{\mathbf A}}=(A,*)$ be the corresponding quantum algebra.
Then the map $\sigma={\rm i}^d$ defines an antiautomorphism ${{\mathbf A}}\to {{\mathbf A}}$ such that $\sigma^2=s=(-1)^d$. A ${\mathbb Z}/2$-invariant quantization ${\mathbf A}$ of $A$ equipped with a~filtration-preserving antiautomorphism $\sigma$ such that $\sigma^2=s$ will be called {\it even}. Thus an even star-product gives rise to an even quantization. Conversely, an even quantization ${{\mathbf A}}$ of $A$ gives rise to an even star-product on~$A$.

\begin{Proposition} This defines a pair of mutually inverse bijections between even star-products on $A$ and ${\mathbb Z}/2$-equivariant quantizations ${{\mathbf A}}$ of $A$ equipped with a filtered antiautomorphism $\sigma\colon {{\mathbf A}}\to {{\mathbf A}}$ such that $\operatorname{gr}\sigma={\rm i}^d$ and $\sigma^2=s$ with a $\sigma$-invariant $($i.e., ${\mathbb Z}/4$-invariant$)$ quantization map.
\end{Proposition}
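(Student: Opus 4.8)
The plan is to reduce everything to the preceding Proposition, which already gives a bijection between star-products on $A$ and $\mathbb{Z}/2$-equivariant quantizations of $A$ equipped with a quantization map; it then only remains to match the \emph{evenness} data on the two sides and to observe that this matching is transported along the bijection.

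\textbf{From an even star-product.} Given an even star-product $*$, I would take $\mathbf{A}=(A,*)$, $\phi={\rm Id}$, and $\sigma={\rm i}^d$, and verify the required properties directly. Since ${\rm i}^d$ preserves each $A_m$, it preserves each $F_d\mathbf{A}=\bigoplus_{m\le d}A_m$, so it is filtration-preserving with $\operatorname{gr}\sigma={\rm i}^d$; and $\sigma^2=({\rm i}^2)^d=(-1)^d=s$. The antiautomorphism identity is the only computation: for $a\in A_n$, $b\in A_m$ we have $a*b=\sum_k C_k(a,b)$ with $C_k(a,b)\in A_{n+m-2k}$, hence $\sigma(a*b)=\sum_k{\rm i}^{n+m-2k}C_k(a,b)$, while $\sigma(b)*\sigma(a)={\rm i}^{n+m}(b*a)={\rm i}^{n+m}\sum_k(-1)^kC_k(a,b)$ by evenness; these agree since ${\rm i}^{n+m}(-1)^k={\rm i}^{n+m-2k}$. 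Finally $\phi={\rm Id}$ is visibly $\sigma$-invariant, as the $\mathbb{Z}/4$-action ${\rm i}^d$ on the source $A$ is literally the action $\sigma$ on the target $\mathbf{A}=(A,*)$.

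\textbf{From an even quantization.} Conversely, given $(\mathbf{A},\sigma,\phi)$ as in the statement, set $a*b:=\phi^{-1}(\phi(a)\phi(b))$. Note $\phi$ is $\sigma^2$-invariant, i.e.\ $s$-invariant, hence $\mathbb{Z}/2$-equivariant, so by the previous Proposition this is a star-product and only evenness remains. The hypothesis that $\phi$ is $\sigma$-invariant (i.e.\ $\mathbb{Z}/4$-equivariant, with $\mathbb{Z}/4$ acting on $A$ by $\operatorname{gr}\sigma={\rm i}^d$) says $\sigma(\phi(a))=\phi({\rm i}^da)={\rm i}^n\phi(a)$ for $a\in A_n$. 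Computing $\sigma(\phi(a)\phi(b))$ in two ways — as $\sigma(\phi(b))\sigma(\phi(a))={\rm i}^{n+m}\phi(b*a)$ using that $\sigma$ is an antiautomorphism, and as $\sum_k{\rm i}^{n+m-2k}\phi(C_k(a,b))$ using $\phi(a)\phi(b)=\phi(a*b)=\sum_k\phi(C_k(a,b))$ with $C_k(a,b)\in A_{n+m-2k}$ — then applying $\phi^{-1}$ and dividing by ${\rm i}^{n+m}$ gives $\sum_k(-1)^kC_k(a,b)=\sum_kC_k(b,a)$. Since for each fixed $k$ both sides have their unique component of degree $n+m-2k$ there, this forces $C_k(a,b)=(-1)^kC_k(b,a)$, so $*$ is even.

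\textbf{Mutual inverseness and the main point.} That the two assignments are mutually inverse is inherited from the previous Proposition: the underlying correspondence between star-products and pairs $(\mathbf{A},\phi)$ is already a bijection, and the computations above show that under it the antiautomorphism $\sigma$ on $\mathbf{A}$ corresponds exactly to ${\rm i}^d$ on $(A,*)$ — which is precisely the $\sigma$-invariance imposed on the quantization map — so the extra datum is carried along automatically. There is no genuine obstacle here; the statement is essentially a bookkeeping lemma, and the only points requiring care are the correct reading of ``$\sigma$-invariant quantization map'' as the intertwining relation $\phi\circ{\rm i}^d=\sigma\circ\phi$, and the observation that it is the grading, through $C_k(a,b)\in A_{n+m-2k}$, that lets one pass from the single identity $\sum_k(-1)^kC_k(a,b)=\sum_kC_k(b,a)$ to the individual evenness relations.
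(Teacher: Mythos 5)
Your proof is correct and is exactly the ``straightforward'' verification the paper has in mind: you pass through the preceding Proposition (star-products $\leftrightarrow$ $\mathbb{Z}/2$-equivariant quantizations with quantization map) and then check, using the degree decomposition $C_k(a,b)\in A_{n+m-2k}$, that the evenness condition on $*$ corresponds exactly to $\sigma$ being an antiautomorphism squaring to $s$ together with the $\sigma$-equivariance (i.e.\ $\mathbb{Z}/4$-equivariance) of $\phi$. The only points worth being careful about — the identity $i^{n+m}(-1)^k=i^{n+m-2k}$, the fact that $\sigma$-invariance of $\phi$ implies $s$-invariance so the previous Proposition applies, and the reduction from $\sum_k(-1)^kC_k(a,b)=\sum_kC_k(b,a)$ to the termwise identity via the grading — are all handled correctly.
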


\begin{proof} Straightforward.
\end{proof}

\begin{Example}\label{MV} Let $A={\mathbb C}[x,y]$ with the Poisson bracket defined by $\lbrace y,x\rbrace=1$. Let ${{\mathbf A}}$ be the Weyl algebra generated by~$X$,~$Y$ with the defining relation $YX-XY=1$. This is a ${\mathbb Z}/2$-equivariant filtered quantization of $A$
with the filtration defined by $\deg(X)=\deg(Y)=1$. Define the map $\phi\colon A\to {{\mathbf A}}$ which sends $x^iy^j$ to the average of all the orderings of the monomial~$X^iY^j$; in other words, it is determined by the formula $\phi((px+qy)^n)=(pX+qY)^n$ for any $p,q\in {\mathbb C}$.
For example, $\phi(1)=1$, $\phi(x)=X$, $\phi(y)=Y$, $\phi(xy)=\frac{XY+YX}{2}=XY+\frac{1}{2}$. It is easy to show that this map gives rise to the symmetric Moyal--Weyl star-product
\begin{gather*}
a*b=\mu\left(\exp\left(\frac{\partial_y\otimes \partial_x-\partial_x\otimes \partial_y}{2}\right)(a\otimes b)\right),
\end{gather*}
where $\mu\colon A\otimes A\to A$ is the commutative multiplication. For example,
\begin{gather*}
x*y=\phi^{-1}(XY)=xy-\frac{1}{2}.
\end{gather*}
It is easy to check that this star-product is even, short, and nondegenerate.

The same statement holds in several variables, on a finite dimensional symplectic vector space~$V$ (i.e., for $A=SV$), where the symmetric Moyal--Weyl star-product has the form
\begin{gather*}
a*b=\mu(\exp(\pi/2)(a\otimes b)),
\end{gather*}
where $\pi\in \wedge^2V$ is the Poisson bivector.

More generally, given an element $\eta\in S^2V$, we may consider the short star-product
\begin{gather*}
a*b=\mu(\exp((\eta+\pi)/2)(a\otimes b)),
\end{gather*}
(the non-symmetric Moyal--Weyl star-product), which is not even if $\eta\ne 0$. Let $B\colon V\to V$ be the linear operator such that $\eta=(B\otimes 1)\pi$, i.e.,
\begin{gather}\label{nsmoyal}
a*b=\mu(\exp(((B+1)\otimes 1)\pi/2)(a\otimes b));
\end{gather}
then $B\in \mathfrak{sp}(V)$.
It is easy to show that $*$ is nondegenerate if and only if the operator $B+1$ (or, equivalently, $B-1$) is invertible.
\end{Example}

\subsection{Conical symplectic singularities}

The main class of examples of algebras $A$ we will be interested in is $A={\mathcal O}(X)$, where~$X$ is a (normal) conical symplectic singularity in the sense of Beauville (see~\cite{Ka, L2} and references therein for their definition and basic properties).

We will need the following lemma.

\begin{Lemma}[{\cite[Proposition~2.5]{L2}}]\label{inn} If $X$ is a conical symplectic singularity then every Poisson derivation of $A:=\mathcal{O}(X)$ is inner.
\end{Lemma}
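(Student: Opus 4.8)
The plan is to reduce the statement to the known fact that $H^2_{\mathrm{DR}}$ of the smooth locus $X^{\mathrm{reg}}$ controls Poisson derivations, and then to use the topology of conical symplectic singularities to kill that group. First I would recall the setup: since $X$ is normal and $X \setminus X^{\mathrm{reg}}$ has codimension $\ge 2$, we have $\mathcal{O}(X) = \mathcal{O}(X^{\mathrm{reg}})$, and the Poisson structure on $A := \mathcal{O}(X)$ is induced by the symplectic form $\omega$ on $X^{\mathrm{reg}}$. The key general mechanism is that for a smooth symplectic variety $Y$ with symplectic form $\omega$, the map sending a vector field $v$ to the $1$-form $\iota_v \omega$ identifies Poisson derivations of $\mathcal{O}(Y)$ (i.e.\ symplectic vector fields, those $v$ with $\mathcal{L}_v\omega = 0$) with closed algebraic $1$-forms, and under this identification the \emph{inner} (Hamiltonian) derivations correspond to exact $1$-forms $df$. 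Hence the space of Poisson derivations modulo inner ones is $H^1(X^{\mathrm{reg}}, \Omega^{\ge 1}_{\mathrm{closed}})$, which by the algebraic de Rham theorem (for $X^{\mathrm{reg}}$ smooth) maps to $H^1_{\mathrm{DR}}(X^{\mathrm{reg}})$; more precisely one gets an exact sequence showing it suffices to show $H^1_{\mathrm{DR}}(X^{\mathrm{reg}}) = 0$ together with a vanishing that controls the passage from closed forms to de Rham classes.

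Next I would invoke the conical structure: $X$ carries a contracting $\mathbb{G}_m$-action with a unique fixed point (the vertex), and this action contracts $X$ onto the vertex, so $X$ is contractible in the analytic topology; moreover the $\mathbb{G}_m$-action restricts to $X^{\mathrm{reg}}$. The decisive input from the theory of symplectic singularities (this is exactly where I would cite Namikawa / Kaledin, as referenced in the excerpt) is that $X$ has \emph{rational singularities} and in particular $X^{\mathrm{reg}}$ has vanishing odd low-degree cohomology in the relevant range; concretely, for a conical symplectic singularity one knows $H^1(X^{\mathrm{reg}}, \mathbb{C}) = 0$ and $H^2(X^{\mathrm{reg}}, \mathcal{O}_{X^{\mathrm{reg}}}) = 0$. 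Combined with the Hodge-theoretic / mixed Hodge structure comparison on the smooth variety $X^{\mathrm{reg}}$, these vanishings force $H^1_{\mathrm{DR}}(X^{\mathrm{reg}}) = 0$. Then the exact sequence from the previous paragraph collapses and every closed algebraic $1$-form on $X^{\mathrm{reg}}$ is exact, i.e.\ every symplectic vector field is Hamiltonian, i.e.\ every Poisson derivation of $A$ is inner.

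The main obstacle is the second paragraph's cohomological input: one must be careful that working with \emph{algebraic} differential forms and algebraic de Rham cohomology on the possibly-non-affine, non-compact variety $X^{\mathrm{reg}}$ really does compute the thing one wants, and that the jump from "closed $1$-form modulo exact" to "$H^1_{\mathrm{DR}}$" does not hide an extra contribution (it is governed by $H^0$ of a sheaf of higher closed forms, which one handles by a degeneration-of-spectral-sequence or weight argument using that $X^{\mathrm{reg}}$ admits the contracting $\mathbb{G}_m$-action, forcing purity). In practice, rather than redo this, I would simply quote it: this is precisely \cite[Proposition~2.5]{L2}, and the argument above is the standard reduction. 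So the "proof" I would actually write is: identify Poisson derivations with closed $1$-forms and inner ones with exact $1$-forms on $X^{\mathrm{reg}}$, observe that the quotient injects into $H^1_{\mathrm{DR}}(X^{\mathrm{reg}})$, and cite the vanishing $H^1_{\mathrm{DR}}(X^{\mathrm{reg}})=0$ for conical symplectic singularities from \cite{Na3} (equivalently, use that $X^{\mathrm{reg}}$ is simply connected in the conical setting, or that $X$ is $\mathbb{Q}$-factorial terminal after a suitable partial resolution — but the cleanest route is the direct cohomological vanishing).
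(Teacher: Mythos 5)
Your approach is correct and lands on the same key topological input as the paper, but the routing is genuinely different, so a comparison is worthwhile. The paper works \emph{pointwise} with the primitive: a Poisson derivation corresponds to a closed $1$-form $\iota_v\omega$ on the smooth locus $X^\circ$, hence to an a priori multivalued Hamiltonian $H$; the monodromy of $H$ gives a homomorphism $\pi_1(X^\circ)\to\mathbb{C}$, which vanishes because Namikawa's theorem makes $\pi_1(X^\circ)_{\mathrm{alg}}$ finite (and any homomorphism from a group with finite abelianization to $\mathbb{C}$ is zero). Then normality of $X$ extends $H$ holomorphically across the singular locus, and GAGA (using homogeneity to view $H$ as a section of a line bundle on $\mathbb{P}X$) makes $H$ algebraic, so $v=\{H,-\}$ is inner. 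You instead package all of this cohomologically: the quotient of closed algebraic $1$-forms by exact ones embeds into $H^1_{\mathrm{DR}}(X^{\mathrm{reg}})$, and by Grothendieck's algebraic de Rham comparison theorem this equals $H^1(X^{\mathrm{reg}},\mathbb{C})$, which vanishes by the same finiteness of $\pi_1^{\mathrm{alg}}$. So your route replaces the paper's normality-plus-GAGA step by Grothendieck's comparison theorem, and replaces the explicit monodromy homomorphism by the observation that $\mathrm{Hom}(\pi_1^{\mathrm{ab}},\mathbb{C})=H^1(X^{\mathrm{reg}},\mathbb{C})$. Both are clean; the paper's version is more elementary and self-contained.

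One thing to tighten in your write-up: the invocation of rational singularities and $H^2(X^{\mathrm{reg}},\mathcal{O})=0$ is a red herring here. Once you have the injection of (closed $1$-forms modulo exact) into $H^1_{\mathrm{DR}}(X^{\mathrm{reg}})$ — which is immediate from the Hodge-to-de Rham spectral sequence, since $d_2$ and higher differentials out of $E_2^{1,0}$ land in zero groups — the only input you need is $H^1(X^{\mathrm{reg}},\mathbb{C})=0$, and this follows directly from Namikawa's theorem on the finiteness of $\pi_1^{\mathrm{alg}}(X^{\mathrm{reg}})$ together with finite generation of the topological $\pi_1$. No spectral-sequence degeneration or weight-purity argument for the contracting $\mathbb{G}_m$-action is required; that part of your third paragraph can be dropped.
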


\begin{proof} Here is another proof. Any such homogeneous derivation is defined by an (a priori multivalued) holomorphic Hamiltonian $H$ on the big symplectic leaf $X^\circ$ of $X$. Moreover, $H$ is, in fact, single-valued, since by~\cite{Na1} the algebraic fundamental group of $X^\circ$ is finite (indeed, we have a homomorphism $\phi\colon \pi_1(X^\circ,x_0)\to {\mathbb C}$ given by $\phi(\gamma)=\gamma(H)-H$, where $\gamma(H)$ is the analytic continuation of~$H$ along~$\gamma$, and $\phi=0$ since $\pi_1(X^\circ,x_0)_{\rm alg}$ is finite). Then by normality of~$X$ the function~$H$ extends to the whole~$X$, to a homogeneous holomorphic function on~$X$. This function is a holomorphic section of a line bundle on ${\mathbb P}X$, which is algebraic by the GAGA theorem, so $H\in \mathcal{O}(X)$, as claimed.
\end{proof}

It is shown in \cite{L2} that for conical symplectic singularities there is a very nice parametrization of filtered quantizations of $A$, which is the same as Namikawa's parametrization of filtered Poisson deformations~$X$~\cite{Na3}. Namely, they are parametrized by the finite dimensional space $\mathfrak{P}=H^2\big(\widetilde X^{\rm reg},{\mathbb C}\big)$, where $\widetilde X^{\rm reg}$ is the smooth part of the ${\mathbb Q}$-terminalization $\widetilde X$ of~$X$, modulo the action of a finite real reflection group~$W$ called the Namikawa Weyl group. Denote the quantization corresponding to $\lambda\in \mathfrak{P}$ by~${\mathbf A}_\lambda$. It is easy to show that ${\mathbf A}_\lambda$ is ${\mathbb Z}/2$-invariant.

Since $W$ is a reflection group, the space $\mathfrak{P}/W$ bijectively parametrizing quantizations is an affine space, whose coordinates are independent homogeneous generating invariants $p_1,\dots,p_r\in {\mathbb C}[\mathfrak{P}]^W$ of degrees $d_1,\dots,d_r$.

It is not hard to see that ${\mathbf A}_\lambda$ is even if and only if $\lambda$ and $-\lambda$ are equivalent with respect to the Namikawa Weyl group, i.e., if and only if there is an element $w\in W$ such that $w\lambda=-\lambda$ (see \cite{BPR}). This is equivalent to saying that $p_i=0$ whenever $d_i$ is odd, which defines a subspace in
$\mathfrak{P}/W$.

\begin{Remark}Lemma \ref{inn} implies that in the case of conical symplectic singularities the antiautomorphism $\sigma\colon {\mathbf A}\to {\mathbf A}$ such that $\sigma^2=s$ and $\operatorname{gr}\sigma={\rm i}^d$ is unique if exists. Indeed, if~$\sigma_1$,~$\sigma_2$ are two such antiautomorphisms then $g:=\sigma_1\circ \sigma_2^{-1}$ is an automorphism commuting with~$s$ such that $\operatorname{gr} g=1$. Hence $\log (g)$ is a linear combination of derivations of~${\mathbf A}$ of degree $\le -2$. If $g\ne 1$, then in the leading order $\log(g)$ yields a nonzero homogeneous derivation $D\colon A\to A$ of degree $\le -2$. By Lemma~\ref{inn}, it is inner, so given by a Hamiltonian of degree $\le 0$. But such a~Hamiltonian is necessarily constant, so $D=0$, a contradiction.

The same argument proves that if $g\colon {\mathbf A}\to {\mathbf A}$ is a filtration-preserving
automorphism commuting with $s$ then $g$ is completely determined by $\operatorname{gr}g$.
\end{Remark}

\subsection{HyperK\"ahler cones}

We will be especially interested in conical symplectic singularities which are {\it hyperK\"ahler cones}. Such singularities play an important role in physics, since many of them arise as Higgs and Coulomb branches in 3-dimensional $N=4$ superconformal field theories.

Namely, by a {\it hyperK\"ahler cone} we mean a normal conical Poisson variety~$X$ with Poisson bracket of negative degree (symplectic outside a set of codimension $\ge 2$) and a hyperK\"ahler structure on the smooth part that is compatible to the Poisson structure (i.e., the complex structure on $X$ is given by the operator $I$ and the Poisson structure by the symplectic form $\omega:=\omega_J+{\rm i}\omega_K$). Like smooth hyperK\"ahler manifolds, hyperK\"ahler cones are studied using their twistor space, which is the universal family of complex structures on $X$ fibered over ${\mathbb C}{\mathbb P}^1$. The literature (especially mathematical) on hyperK\"ahler cones is rather scarce, but more details can be found in \cite{Br, GNT,Sw}.\footnote{Note that the papers \cite{Br, Sw} only deal with the smooth part of~$X$. However, it is expected that the geometry of $X$ is fully controlled by what happens on the smooth part.}

An important property of hyperK\"ahler cones, which lies behind the phenomena described in this paper, is that the complex structures corresponding to all the points of ${\mathbb C}{\mathbb P}^1$ are equivalent, and in fact permuted by a group~${\rm SU}(2)$ acting real analytically on~$X$ (and by rotations on~\mbox{${\mathbb C}{\mathbb P}^1=S^2$}). Thus the twistor space of~$X$ has the form $({\rm SU}(2)\times X)/{\rm U}(1)$, where the action of~${\rm U}(1)$ on~$X$ comes from the grading on~$\mathcal O(X)$.

Important examples of hyperK\"ahler cones include closures of nilpotent orbits of a semisimple Lie algebra, Slodowy slices, quotients of a symplectic vector space by a finite group of symplectic automorphisms, and more generally hyperK\"ahler reductions of a symplectic vector space by a compact Lie group of symmetries (Higgs branches), which in particular include Nakajima quiver varieties. Coulomb branches (when they are conical) are also expected to be hyperK\"ahler cones~\cite{Nak}.

It is expected that under reasonable assumptions hyperK\"ahler cones and in particular Higgs and Coulomb branches should be conical symplectic singularities, and this is known for Nakajima quiver varieties~\cite{BS}. Also, sufficient conditions for a Higgs branch to be a symplectic singularity (which are satisfied in many examples) are given in~\cite{HSS}. So when we speak about hyperK\"ahler cones, we will assume that they are conical symplectic singularities. However, it must be noted that certain nilpotent orbit closures of classical groups fail to be symplectic singularities since they are not normal, even though they are sometimes Higgs branches~\cite{KP}.

\subsection{Finiteness of the number of fixed sets}

\begin{Lemma}\label{finfix} Let $X$ be an affine scheme of finite type over ${\mathbb C}$ and
$G$ be a reductive group acting on $X$. Then up to the $G$-action there are finitely many subschemes
of $X$ of the form $X^g$, where $g\in G$ is a semisimple element, .
\end{Lemma}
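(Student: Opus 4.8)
The plan is to reduce the statement to elementary linear algebra inside one finite-dimensional representation. First I would fix a $G$-equivariant \emph{closed} embedding $X\hookrightarrow V$ into a finite-dimensional $G$-module $V$: this is standard, since $\mathcal O(X)$ is a finitely generated rational $G$-module and therefore contains a finite-dimensional $G$-submodule $M$ generating it as an algebra, and the resulting surjection of $G$-algebras $S(M)\twoheadrightarrow\mathcal O(X)$ realizes $X$ as a $G$-stable closed subscheme of $V:=M^{*}$. The point is that for a semisimple $g\in G$ one has, scheme-theoretically, $X^{g}=X\cap V^{g}$. Indeed $g$ acts diagonalizably on $V$ and hence on $S(V^{*})$; if $V^{*}=\bigoplus_{\zeta}(V^{*})_{\zeta}$ is the eigenspace decomposition (finitely many eigenvalues $\zeta$), then the ideal of $S(V^{*})$ defining the fixed scheme of $g$ on $V$ is exactly the one generated by $\bigoplus_{\zeta\neq 1}(V^{*})_{\zeta}$ — those linear forms must vanish on the fixed scheme since $g-1$ is invertible on them, while $g$ acts trivially modulo the ideal they generate — so that fixed scheme is $V^{g}$; passing to the quotient by the $g$-stable ideal of $X$ then gives $X^{g}=X\cap V^{g}$. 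Hence it suffices to show that, up to the $G$-action, only finitely many subspaces of $V$ arise as $V^{g}$ with $g$ semisimple.

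Next I would move $g$ into a maximal torus. For $G$ connected reductive, every semisimple element is $G$-conjugate to an element of a fixed maximal torus $T$, and $V^{hgh^{-1}}=hV^{g}$, so it is enough to count the subspaces $V^{g}$ with $g\in T$. Decomposing $V=\bigoplus_{\chi\in\Lambda}V_{\chi}$ into $T$-weight spaces, with $\Lambda\subset X^{*}(T)$ finite, one gets $V^{g}=\bigoplus_{\chi\in\Lambda,\ \chi(g)=1}V_{\chi}$, which depends only on the subset $\{\chi\in\Lambda:\chi(g)=1\}$ of the finite set $\Lambda$. So there are at most $2^{|\Lambda|}$ such subspaces, hence at most $2^{|\Lambda|}$ subschemes $X^{g}$ with $g\in T$, and every $X^{g}$ with $g$ semisimple is a $G$-translate of one of these. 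For a possibly disconnected reductive $G$ one uses in addition that $G$ has finitely many connected components and that the semisimple elements of a fixed component are, up to $G$-conjugacy, contained in a single ``maximal torus of that component'' (standard structure theory of non-connected reductive groups); one then runs the same weight-space count in each of these finitely many tori.

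The genuinely substantive idea is the equivariant linearization that reduces everything to the torus computation; after that the argument is a finite count. The two places I would be careful in writing it up are: (a) the \emph{scheme-theoretic} identity $X^{g}=X\cap V^{g}$, which requires pinning down the fixed-point ideal rather than arguing set-theoretically; and (b) the passage from connected to disconnected $G$, which leans on the structure of semisimple conjugacy classes in a non-connected reductive group. Neither is hard, but isolating them keeps the proof clean.
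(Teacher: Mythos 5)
Your proof is correct and follows essentially the same route as the paper's: the paper chooses eigen-generators $f_1,\dots,f_n$ of $\mathcal O(X)$ for a diagonalizable group (which is exactly your equivariant linearization $X\hookrightarrow V$, packaged without naming $V$), observes that $X^g$ is cut out by the $f_i$ with $\chi_i(g)\ne 1$, and then reduces the general case to the diagonalizable one by citing the same structure theorem for semisimple elements of a possibly disconnected reductive group (de~Siebenthal, Lusztig) that you invoke. The only minor difference is that the paper states the reduction in terms of a diagonalizable subgroup $D_Z$ attached to each connected component $Z$, which is a slightly more careful formulation than ``maximal torus of that component,'' since $D_Z$ need not itself be a torus when $Z\ne G^0$.
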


\begin{proof} First consider the case when $G$ is diagonalizable (in which case every $g\in G$ is semisimple). In this case the lemma is standard (see, e.g., \cite[Proposition on p.~44]{Ste}). Namely, pick generators $f_1,\dots,f_n$ of ${\mathbb C}[X]$ such that $f_i(gx)=\chi_i(g)f_i(x)$ for some characters~$\chi_i$ of~$G$. Then $f_i(gx)-f_i(x)=(\chi_i(g)-1)f_i(x)$, so~$X^g$ is cut out by the equations $f_i(x)=0$ for those~$i$ for which $\chi_i(g)\ne 1$, which implies that there are finitely many subschemes~$X^g$.

The general case reduces to the diagonalizable case by using the following fact (see~\cite{Sie} and \cite[Section~1.14]{Lu}): for each connected component~$Z$ of~$G$ there is a diagonalizable subgroup $D_Z\subset G$ such that every semisimple element $g\in Z$ is $G^0$-conjugate to an element of~$D_Z$.\footnote{We are grateful to G.~Lusztig for this explanation.}
\end{proof}

\section{Classification of short star-products}\label{section3}

In this section we present the classification of nondegenerate short star-products, the idea of which was explained to the first author by Maxim Kontsevich.

\subsection{Twisted traces}

Consider the setting of Section~\ref{constru}. Let $g\colon {{\mathbf A}}\to {{\mathbf A}}$ be a filtration-preserving invertible linear map, and let $T\colon {{\mathbf A}}\to {\mathbb C}$ be a $g$-twisted trace, i.e., $T({\mathbf a}{\mathbf b})=T({\mathbf b}g({\mathbf a}))$ for all ${\mathbf a},{\mathbf b}\in {{\mathbf A}}$. Note that taking ${\mathbf b}=1$, we get $T(g({\mathbf a}))=T({\mathbf a})$, i.e., $T$ is $g$-invariant.

Define the bilinear form $(\,,\,)_T$ on ${\mathbf A}$ by the formula $({\mathbf a},{\mathbf b})_T:=T({\mathbf a}{\mathbf b})$.

\begin{Lemma}\label{ide} Let $K$ be the left kernel of the bilinear form $(\,,\,)_T$. Then $K$ is also the right kernel of $(\,,\,)_T$, and a $g$-invariant two-sided ideal in~${{\mathbf A}}$. In particular, if ${\mathbf A}$ is a simple algebra and $T\ne 0$ then $K=0$.
\end{Lemma}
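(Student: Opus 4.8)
The plan is to verify the three assertions in turn, using only the defining identity $T(\mathbf{a}\mathbf{b})=T(\mathbf{b}g(\mathbf{a}))$ together with the fact that $g$ is an algebra automorphism (which is implicit: the twisted-trace identity forces $g$ to be multiplicative, as one checks by comparing $T((\mathbf{a}\mathbf{b})\mathbf{c})$ and $T(\mathbf{a}(\mathbf{b}\mathbf{c}))$). First I would show $K$ is a left ideal: if $\mathbf{a}\in K$ and $\mathbf{c}\in\mathbf{A}$, then for all $\mathbf{b}$ we have $(\mathbf{c}\mathbf{a},\mathbf{b})_T=T(\mathbf{c}\mathbf{a}\mathbf{b})=T((\mathbf{a}\mathbf{b})g(\mathbf{c}))=T(\mathbf{a}(\mathbf{b}g(\mathbf{c})))=(\mathbf{a},\mathbf{b}g(\mathbf{c}))_T=0$ since $\mathbf{a}\in K$. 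Hence $\mathbf{c}\mathbf{a}\in K$.

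Next I would identify the right kernel with $K$ and deduce that $K$ is also a right ideal. Let $K'$ denote the right kernel, i.e.\ the set of $\mathbf{b}$ with $T(\mathbf{a}\mathbf{b})=0$ for all $\mathbf{a}$. Using the twisted-trace identity, $T(\mathbf{a}\mathbf{b})=T(\mathbf{b}g(\mathbf{a}))$, and as $\mathbf{a}$ ranges over $\mathbf{A}$ so does $g(\mathbf{a})$ (since $g$ is invertible); therefore $\mathbf{b}\in K'$ iff $T(\mathbf{b}\mathbf{a}')=0$ for all $\mathbf{a}'$, i.e.\ iff $\mathbf{b}\in K$. So $K=K'$. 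Then the same computation as above with the roles reversed, or simply the observation that $K$ is a left ideal equal to the right kernel, shows $K$ is a two-sided ideal: for $\mathbf{a}\in K$ and $\mathbf{c}\in\mathbf{A}$, $(\mathbf{b},\mathbf{a}\mathbf{c})_T=T(\mathbf{b}\mathbf{a}\mathbf{c})=0$ because $\mathbf{b}\mathbf{a}\in K$ (as $K$ is a left ideal and... actually one uses $\mathbf{a}\mathbf{c}$: from $\mathbf{a}\in K=K'$, $T((\mathbf{b}\mathbf{a})\mathbf{c})$ — better to argue directly) $T(\mathbf{b}\mathbf{a}\mathbf{c})=T(\mathbf{a}\mathbf{c}g(\mathbf{b}))$, and since $\mathbf{a}\in K=K'$ is in the right kernel this is $0$; hence $\mathbf{a}\mathbf{c}\in K'=K$.

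For $g$-invariance of $K$: if $\mathbf{a}\in K$ then $T(g(\mathbf{a})\mathbf{b})=T(\mathbf{b}g(g(\mathbf{a})))=T(g(\mathbf{a})g^{-1}(\mathbf{b}))\cdot$? — cleaner: $T(g(\mathbf{a})\mathbf{b})=T(g(\mathbf{a})g(g^{-1}(\mathbf{b})))=T(g(\mathbf{a}g^{-1}(\mathbf{b})))=T(\mathbf{a}g^{-1}(\mathbf{b}))=0$, using $g$-invariance of $T$ and multiplicativity of $g$; so $g(\mathbf{a})\in K$, and applying the same to $g^{-1}$ gives $g(K)=K$. Finally, if $\mathbf{A}$ is simple and $T\neq 0$, then $K$ is a proper two-sided ideal (proper because $T\neq 0$ means some $(\mathbf{a},\mathbf{b})_T\neq 0$, so $1\notin K$), hence $K=0$.

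The computations are all one-liners; the only genuine point requiring care is that $g$ must be an algebra automorphism for these manipulations to go through, which is not separately hypothesized but follows from associativity and the twisted-trace property, so I would either note this at the outset or, since the lemma will be applied in situations where $g$ is manifestly an automorphism (e.g.\ $g=\sigma^2$ or a grading operator), simply invoke it. I do not anticipate a real obstacle here; this is a bookkeeping lemma.
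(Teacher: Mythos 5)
Your arguments for $K=K'$ and for $K$ being a two-sided ideal are sound and essentially the paper's (the paper shows $\mathbf{ab},\mathbf{ba}\in K$ directly via $(\mathbf{ab},\mathbf{c})_T=(\mathbf{a},\mathbf{bc})_T$ and $(\mathbf{c},\mathbf{ba})_T=(\mathbf{cb},\mathbf{a})_T$, while you push the twist around a bit more, but both are valid and neither actually uses multiplicativity of $g$).

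However, your treatment of $g$-invariance has a genuine gap. The claim you put up front --- that ``the twisted-trace identity forces $g$ to be multiplicative'' --- is false at the level of generality of this lemma. Comparing $T((\mathbf{ab})\mathbf{c})$ and $T(\mathbf{a}(\mathbf{bc}))$ only yields $T(\mathbf{c}\,g(\mathbf{ab}))=T(\mathbf{c}\,g(\mathbf{a})g(\mathbf{b}))$ for all $\mathbf{c}$, i.e.\ $g(\mathbf{ab})-g(\mathbf{a})g(\mathbf{b})\in K'$. One can conclude $g(\mathbf{ab})=g(\mathbf{a})g(\mathbf{b})$ only when $K'=0$; indeed, in the paper this is precisely the \emph{next} lemma (Lemma~3.2, ``If the kernel of $(\,,\,)_T$ vanishes then $g$ is an algebra automorphism''), which carries that extra hypothesis. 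In Lemma~\ref{ide} itself $g$ is only an invertible filtration-preserving linear map, and $K$ need not vanish --- so invoking $g(\mathbf{a}\,g^{-1}(\mathbf{b}))=g(\mathbf{a})\,g(g^{-1}(\mathbf{b}))$ is not justified. (Your computation happens to be salvageable, since $T$ kills $K'$ and hence $T\big(g(\mathbf{a})g(\mathbf{b})\big)=T\big(g(\mathbf{ab})\big)$ still holds; but you did not observe this, and it is an unnecessary detour.) The paper's route is both correct and shorter: $\mathbf{a}\in K$ iff $T(\mathbf{b}\,g(\mathbf{a}))=0$ for all $\mathbf{b}$, i.e.\ iff $g(\mathbf{a})\in K'$, and since $K'=K$ this gives $g(K)=K$ directly with no appeal to multiplicativity whatsoever.
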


\begin{proof} Let $K'$ be the right kernel of $(\,,\,)_T$. We have ${\mathbf b}\in K'$ if and only if $T({\textbf{ab}})=0$ for all ${\mathbf a}\in {{\mathbf A}}$. This is equivalent to saying that $T({\mathbf b}g({\mathbf a}))=0$ for all ${\mathbf a}\in {{\mathbf A}}$, which happens if and only if ${\mathbf b}\in K$. Thus, $K'=K$. Also ${\mathbf a}\in K$ if and only if $T({\mathbf b}g({\mathbf a}))=0$ for all ${\mathbf b}\in {\mathbf A}$, which happens if and only if $g({\mathbf a})\in K$, hence $K$ is $g$-invariant. Finally, if ${\mathbf a}\in K$ and ${\mathbf b}\in {{\mathbf A}}$
then $({\textbf{ab}},{\textbf{c}})_T= ({\textbf{a}},{\textbf{bc}})_T=0$ and $({\textbf{c}},{\textbf{ba}})_T=({\textbf{cb}},{\textbf{a}})_T=0$, so ${\textbf{ab}},{\textbf{ba}}\in K$, i.e., $K$ is a two-sided ideal, as desired.
\end{proof}

By Lemma~\ref{ide}, it makes sense to speak of the kernel of $(\,,\,)_T$ (without specifying if it is left or right).

\begin{Lemma} If the kernel of $(\,,\,)_T$ vanishes then $g$ is an algebra automorphism.
\end{Lemma}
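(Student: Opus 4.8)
The plan is to exploit the fact that a vanishing kernel makes the bilinear form $(\,,\,)_T$ nondegenerate, which in turn pins down $g$ uniquely as the operator implementing the twist. First I would use associativity to record the master identity: for all $\mathbf{a},\mathbf{b},\mathbf{c}\in{\mathbf A}$ we have
\[
(\mathbf{a}\mathbf{b},\mathbf{c})_T=T(\mathbf{a}\mathbf{b}\mathbf{c})=(\mathbf{a},\mathbf{b}\mathbf{c})_T,
\]
and similarly, using the $g$-twisted trace property $T(\mathbf{x}\mathbf{y})=T(\mathbf{y}g(\mathbf{x}))$ applied with $\mathbf{x}=\mathbf{a}\mathbf{b}$, $\mathbf{y}=\mathbf{c}$,
\[
T(\mathbf{a}\mathbf{b}\mathbf{c})=T(\mathbf{c}g(\mathbf{a}\mathbf{b}))=(\mathbf{c},g(\mathbf{a}\mathbf{b}))_T.
\]
On the other hand, applying the twist one factor at a time, $T(\mathbf{a}\mathbf{b}\mathbf{c})=T(\mathbf{b}\mathbf{c}\,g(\mathbf{a}))=T(\mathbf{c}\,g(\mathbf{a})\,g(\mathbf{b}))$, where in the last step I used the twist on $\mathbf{b}$ against $\mathbf{c}\,g(\mathbf{a})$ — but I must be careful, since the twist moves $\mathbf{b}$ past \emph{both} remaining factors. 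The clean way is: $T(\mathbf{a}(\mathbf{b}\mathbf{c}))=T((\mathbf{b}\mathbf{c})g(\mathbf{a}))$ and $T((\mathbf{a}\mathbf{b})\mathbf{c})=T(\mathbf{c}\,g(\mathbf{a}\mathbf{b}))$; equating via associativity $T(\mathbf{a}\mathbf{b}\mathbf{c})=T((\mathbf{a}\mathbf{b})\mathbf{c})=T(\mathbf{a}(\mathbf{b}\mathbf{c}))$, and then also writing $T(\mathbf{b}\mathbf{c}\,g(\mathbf{a}))=T(\mathbf{c}\,g(\mathbf{a})g(\mathbf{b}))$ by twisting $\mathbf{b}$ past $\mathbf{c}\,g(\mathbf{a})$, I obtain
\[
(\mathbf{c},g(\mathbf{a})g(\mathbf{b}))_T=T(\mathbf{c}\,g(\mathbf{a})g(\mathbf{b}))=T(\mathbf{c}\,g(\mathbf{a}\mathbf{b}))=(\mathbf{c},g(\mathbf{a}\mathbf{b}))_T
\]
for all $\mathbf{c}\in{\mathbf A}$.

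Since the kernel of $(\,,\,)_T$ vanishes (and by Lemma~\ref{ide} the left and right kernels coincide, so $(\,,\,)_T$ is genuinely nondegenerate), the displayed equality of pairings against every $\mathbf{c}$ forces
\[
g(\mathbf{a}\mathbf{b})=g(\mathbf{a})g(\mathbf{b})
\]
for all $\mathbf{a},\mathbf{b}\in{\mathbf A}$. That is, $g$ is multiplicative. Combined with the standing hypothesis that $g$ is an invertible linear map, this shows $g$ is an algebra automorphism. I would also remark that $g$ preserves the unit: taking $\mathbf{a}=\mathbf{b}=1$ gives $g(1)=g(1)^2$, and invertibility of $g$ then forces $g(1)=1$ (or simply note $g(1)\mathbf{x}=g(1)g(g^{-1}(\mathbf{x}))=g(g^{-1}(\mathbf{x}))=\mathbf{x}$ using multiplicativity, so $g(1)=1$).

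The only subtle point — and the step I would flag as the main thing to get right — is the bookkeeping in the chain of twists above: one must verify that $T(\mathbf{a}\mathbf{b}\mathbf{c})$ can legitimately be rewritten both as $(\mathbf{c},g(\mathbf{a}\mathbf{b}))_T$ (treating $\mathbf{a}\mathbf{b}$ as a single element and applying the twist once) and as $(\mathbf{c},g(\mathbf{a})g(\mathbf{b}))_T$ (applying the twist to $\mathbf{a}$ and $\mathbf{b}$ separately, moving each all the way to the right). Both manipulations use only the defining property $T(\mathbf{x}\mathbf{y})=T(\mathbf{y}g(\mathbf{x}))$ and associativity in ${\mathbf A}$, so they are valid; the potential trap is conflating them before one knows $g$ is multiplicative, which is exactly what is being proved. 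Once the identity is set up correctly, nondegeneracy does the rest and there is no real obstacle.
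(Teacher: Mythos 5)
Your proof is correct and follows essentially the same computation as the paper: the chain $T(\mathbf{c}\,g(\mathbf{a}\mathbf{b}))=T(\mathbf{a}\mathbf{b}\mathbf{c})=T(\mathbf{b}\mathbf{c}\,g(\mathbf{a}))=T(\mathbf{c}\,g(\mathbf{a})g(\mathbf{b}))$ followed by nondegeneracy in the variable $\mathbf{c}$. The extra remarks (coincidence of left and right kernels via Lemma~\ref{ide}, and the verification that $g(1)=1$) are sound but not needed beyond what the paper's one-line argument already establishes.
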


\begin{proof}We have
\begin{gather*}
T({\mathbf c}g({\mathbf a}{\mathbf b}))=T({\textbf{abc}})=T({\textbf{bc}}g({\mathbf a}))=T({\mathbf c}g({\mathbf a})g({\mathbf b})),
\end{gather*}
hence $g({\mathbf{ab}})=g({\mathbf a})g({\mathbf b})$.
\end{proof}

\subsection{Nondegenerate short star-products and nondegenerate twisted traces}

We will say that $T$ is {\it nondegenerate} if the restriction of the bilinear form $({\mathbf a},{\mathbf b})_T:=T({\mathbf a}{\mathbf b})$ to~$F_i{{\mathbf A}}$ is nondegenerate for each~$i$. Note that this is a strictly stronger condition than vanishing of the kernel of $(\,,\,)_T$ (for example, it implies that $T(1)\ne 0$).

Suppose that $T$ is nondegenerate and $s$-invariant (in this case $g$ commutes with~$s$). Then the bilinear form $(\,,\,)_T$ defines a canonical orthogonal complement ${\mathbf A}^T_i$ to $F_{i-1}{{\mathbf A}}$ in $F_i{{\mathbf A}}$. Note that the right and left orthogonal complement coincide since $g$ is filtration-preserving. We have a natural isomorphism $\theta_i^T\colon {\mathbf A}_i^T\to A_i$. Define the quantization map $\phi^T\colon A\to {{\mathbf A}}$ by $\phi^T|_{A_i}:=\big(\theta_i^T\big)^{-1}$. Clearly, $\phi^T$ is $s$-invariant. Thus, $\phi^T$ defines a star-product $*$ on $A$. Clearly, this star-product does not depend on the normalization of~$T$, so we will always normalize~$T$ so that $T(1)=1$ (which is possible since for a nondegenerate~$T$, one has $T(1)\ne 0$).

\begin{Proposition}[M.~Kontsevich]\label{kon} The star-product $*$ is short and nondegenerate. Moreover, any nondegenerate short star-product on~$A$ is obtained in this way from a unique nondegenerate~$T$ such that $T(1)=1$. In other words, this correspondence is a bijection between short nondegenerate star-products on $A$ and filtered quantizations ${{\mathbf A}}$ of~$A$ equipped with a filtration-preserving automorphism~$g$ commuting with $s$ and a nondegenerate $s$-invariant $g$-twisted trace~$T$ such that $T(1)=1$.
\end{Proposition}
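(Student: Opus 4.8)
The plan is to verify the two directions of the claimed bijection separately. First I would show that the star-product $*$ built from a nondegenerate $s$-invariant $g$-twisted trace $T$ is short and nondegenerate. The key point is that the quantization map $\phi^T$ sends $A_i$ isomorphically onto the orthogonal complement ${\mathbf A}_i^T$ of $F_{i-1}{\mathbf A}$ inside $F_i{\mathbf A}$ with respect to $(\,,\,)_T$. So for $a\in A_n$ and $b\in A_m$ with, say, $n\le m$, I want to compute the components $C_k(a,b)$, i.e., the coefficients of $\phi^T(a)\phi^T(b)$ when re-expanded in the basis $\bigoplus_i {\mathbf A}_i^T$. The component in ${\mathbf A}_j^T$ is extracted by pairing against ${\mathbf A}_{j'}^T$ for suitable $j'$ and using nondegeneracy of $(\,,\,)_T$ on $F_j{\mathbf A}$. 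Concretely, $C_k(a,b)\ne 0$ forces $\bigl(\phi^T(a)\phi^T(b),\phi^T(c)\bigr)_T\ne 0$ for some $c\in A_{n+m-2k}$, i.e., $T\bigl(\phi^T(a)\phi^T(b)\phi^T(c)\bigr)\ne 0$. Using the twisted trace property to cycle $\phi^T(a)$ around, $T\bigl(\phi^T(b)\phi^T(c)g(\phi^T(a))\bigr)\ne 0$; since $g$ is filtration-preserving and commutes with $s$, $g(\phi^T(a))\in F_n{\mathbf A}$, and by $s$-invariance its top component has degree $n$, so $\phi^T(b)\phi^T(c)$ must have a component in ${\mathbf A}_n^T$, forcing $m+(n+m-2k)\ge n$ — wait, one needs the parity/filtration bookkeeping to be done carefully — the upshot is $n+m-2k\ge m-n$, i.e., $k\le n=\min(n,m)$, which is exactly shortness. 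Nondegeneracy of $*$ then follows because $\langle a,b\rangle={\rm CT}(a*b)=\bigl(\phi^T(a),\phi^T(b)\bigr)_T$ by construction (the constant term of $\phi^T(a)\phi^T(b)$ re-expanded picks out the ${\mathbf A}_0^T$-coefficient, which is $T(\phi^T(a)\phi^T(b))$ up to the normalization $T(1)=1$), and this is nondegenerate on each $A_i\cong {\mathbf A}_i^T$ since $(\,,\,)_T$ is.

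For the converse, suppose $*$ is a nondegenerate short star-product on $A$; set ${\mathbf A}=(A,*)$ with the filtration from the grading. I need to produce the automorphism $g$ commuting with $s$ and the nondegenerate $s$-invariant $g$-twisted trace $T$ with $T(1)=1$, and show uniqueness. Define $T:={\rm CT}\colon {\mathbf A}\to{\mathbb C}$, the constant-term map; then $T(1)=1$ and $T$ is $s$-invariant since ${\rm CT}$ commutes with the grading. The bilinear form $(\,,\,)_T$ is then exactly $\langle\,,\,\rangle$, which is nondegenerate on each $F_i{\mathbf A}=\bigoplus_{d\le i}A_d$ by hypothesis (and by shortness the grading pieces are $\langle\,,\,\rangle$-orthogonal, so nondegeneracy in each degree gives nondegeneracy on each $F_i$). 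The nontrivial point is the existence of $g$ with $T(ab)=T(bg(a))$. Here I would argue as follows: for fixed $a$, the functional $b\mapsto {\rm CT}(a*b)$ on ${\mathbf A}$ restricts to a functional on each $F_i{\mathbf A}$, and by nondegeneracy of $(\,,\,)_T$ on $F_i{\mathbf A}$ there is a unique $g(a)\in F_i{\mathbf A}$ (for $i$ large enough, and these are compatible) with ${\rm CT}(a*b)={\rm CT}(b*g(a))$ for all $b\in F_i{\mathbf A}$; since the degrees are bounded, letting $i\to\infty$ this defines $g(a)\in{\mathbf A}$. Linearity of $g$ is clear; that $g$ is filtration-preserving follows from shortness (if $a\in A_n$ then ${\rm CT}(a*b)=0$ unless $b$ has a component of degree $n$, so $g(a)$ lives in degree $n$, hence $g(A_n)\subseteq A_n$ actually — $g$ is even graded), and invertibility follows because the same construction applied with left and right swapped produces a two-sided inverse. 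By Lemma~3.x (the second lemma of Section~3.2) $g$ is an algebra automorphism since the kernel of $(\,,\,)_T$ vanishes. Finally $g$ commutes with $s$ because both $T$ and the product $*$ respect the ${\mathbb Z}/2$-grading (or: $s$ is a filtration-preserving automorphism commuting with $g$ by the grading-compatibility of the defining identity).

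It remains to check that the two constructions are mutually inverse. Starting from $(T,g)$, building $*$ via $\phi^T$, and then recovering $({\rm CT},g')$: one checks $\phi^T$ is by construction the identity after identifying ${\mathbf A}_i^T$ with $A_i$, so the recovered trace is the original $T$ expressed in the $*$-algebra, i.e., ${\rm CT}=T\circ(\phi^T)$ tautologically, and the recovered automorphism equals $g$ by the uniqueness in its defining property. Conversely, starting from $*$, producing $({\rm CT},g)$, and rebuilding $*$: the orthogonal complement of $F_{i-1}{\mathbf A}$ in $F_i{\mathbf A}=\bigoplus_{d\le i}A_d$ with respect to $\langle\,,\,\rangle$ is $A_i$ itself (by the orthogonality of graded pieces under a short star-product), so $\phi^{\rm CT}={\rm id}$ and the rebuilt star-product is the original $*$. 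The uniqueness of $T$ with $T(1)=1$ giving a prescribed short nondegenerate $*$ is forced: $T$ must equal ${\rm CT}$ on ${\mathbf A}=(A,*)$, and transporting back along any quantization map shows any two choices agree.

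The main obstacle I expect is the filtration bookkeeping in the shortness argument of the first direction — tracking how the filtration degree of $g(\phi^T(a))$ interacts with the decomposition ${\mathbf A}=\bigoplus_i{\mathbf A}_i^T$, and making sure the pairing-and-cycling argument genuinely yields the bound $k\le\min(n,m)$ rather than something weaker. Everything else (existence of $g$ via nondegeneracy, $g$ being an automorphism via the earlier lemma, the two constructions being inverse) is essentially formal once the pairing identity $\langle a,b\rangle_{*}=(\phi^T(a),\phi^T(b))_T$ is set up cleanly.
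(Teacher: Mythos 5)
Your overall architecture matches the paper's proof quite closely: define $\phi^T$ via the orthogonal complements ${\mathbf A}_i^T$ of $F_{i-1}{\mathbf A}$ in $F_i{\mathbf A}$ (relying on the twisted-trace identity to show the left and right complements coincide, hence that the ${\mathbf A}_i^T$ are pairwise $T$-orthogonal), prove shortness via a pairing argument, prove nondegeneracy by observing $\langle a,b\rangle=(\phi^T(a),\phi^T(b))_T$, and in the converse direction define $T={\rm CT}$ and construct $g$ from nondegeneracy of $(\,,\,)_T$ on each $F_i{\mathbf A}$, with the already-proved lemma supplying that $g$ is an algebra automorphism. The converse direction and the mutual-inverseness are fine and essentially the paper's argument.

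The genuine gap is in the shortness step, and you flag it yourself (``wait, one needs the parity/filtration bookkeeping to be done carefully'') before simply asserting the final inequality. Concretely: with $a\in A_n$, $b\in A_m$, $n\le m$, you cycle $\phi^T(a)$ to the right, obtaining $T\big(\phi^T(b)\phi^T(c)\,g(\phi^T(a))\big)\ne 0$ with $g(\phi^T(a))\in {\mathbf A}_n^T$. For this to be nonzero, $\phi^T(b)\phi^T(c)$ must have a nonzero component in ${\mathbf A}_n^T$; but $\phi^T(b)\phi^T(c)\in F_{m+(n+m-2k)}{\mathbf A}$, so the only constraint you extract is $n\le 2m+n-2k$, i.e.\ $k\le m$, which is the wrong (larger) bound when $n<m$. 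The inequality you then write down, $n+m-2k\ge m-n$, does not follow from the cycle you performed. The fix is to pair on the other side. The cleanest version (the paper's) is to introduce $\beta(x,y):=(\phi^T(x),\phi^T(y))_T$ on $A$, note that the graded pieces $A_i$ are $\beta$-orthogonal, and observe that associativity alone gives $\beta(x*y,z)=T(\phi(x)\phi(y)\phi(z))=\beta(x,y*z)$ and likewise $\beta(z,x*y)=\beta(z*x,y)$ — no twisted-trace cycling needed here. Then for $c$ of degree $d<m-n$, $\beta(c,a*b)=\beta(c*a,b)$, and $c*a$ has degree at most $d+n<m$, so it is $\beta$-orthogonal to $b\in A_m$; hence $\beta(c,a*b)=0$, and nondegeneracy of $\beta|_{A_d}$ kills the degree-$d$ component of $a*b$. (If you insist on cycling the trace, the correct move is to bring $\phi^T(c)$ to the front: $T\big(\phi^T(c)\,g(\phi^T(a))\,g(\phi^T(b))\big)\ne 0$ forces $\phi^T(c)g(\phi^T(a))\in F_{2n+m-2k}{\mathbf A}$ to have a component in ${\mathbf A}_m^T$, giving $m\le 2n+m-2k$, i.e.\ $k\le n$.) Once this is repaired, the rest of your proposal goes through.

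One smaller remark: in the converse direction, you correctly observe that shortness of $*$ forces $g$ to be graded (not merely filtration-preserving), which is indeed needed to see that the maps $g_i$ on $F_i{\mathbf A}$ are compatible; it is worth saying explicitly that for $a\in A_n$ one has ${\rm CT}(a*b)=0$ unless $\deg b=n$, and that this is precisely where the shortness hypothesis enters.
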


\begin{proof} We have a nondegenerate (in general, nonsymmetric) inner product on~$A$ given by $\beta(a,b):=(\phi(a),\phi(b))_T$, and the spaces $A_i$ are orthogonal under this inner product. Moreover, it is clear that $\beta(a*b,c)=\beta(a,b*c)$. Suppose $a\in A_m$, $b\in A_n$. Let us show that every homogeneous component of $a*b$ has degree $\ge |m-n|$. First assume that $m>n$. It suffices to show that for any homogeneous $c\in A$ of degree $d<m-n$, one has $\beta(a*b,c)=0$. But we have $\beta(a*b,c)=\beta(a,b*c)$, and $b*c$ does not have any terms of degree bigger than $n+d<m$.
Hence $\beta(a*b,c)=\beta(a,b*c)=0$, as desired. A similar argument applies if $m<n$: we have $\beta(c,a*b)=\beta(c*a,b)=0$. Thus, the star-product~$*$ is short. Moreover, we have
\begin{gather*}
\langle a,b\rangle={\rm CT}(a*b)=\beta(a*b,1)=\beta(a,b),
\end{gather*}
which implies that $\langle\,,\,\rangle$ (and hence~$*$) is nondegenerate.

Conversely, assume that $*$ is a nondegenerate short star-product on~$A$. Then the quantization~${{\mathbf A}}$ is identified with $(A,*)$ using the quantization map~$\phi$. Set $T({\mathbf a}):={\rm CT}\big(\phi^{-1}({\mathbf a})\big)$. Then the form $({\textbf{a}},{\textbf{b}})_T:=T({\textbf{ab}})$ is nondegenerate on each $F_i{{\mathbf A}}$. Thus there exists a unique linear automorphism~$g_i$ of $F_i{{\mathbf A}}$ such that $T({\textbf{ab}})=T({\mathbf b}g_i({\mathbf a}))$. Moreover, it is clear that~$g_i$ preserves~$F_{i-1}{{\mathbf A}}$, and $g_i|_{F_{i-1}{{\mathbf A}}}=g_{i-1}$. Thus, the maps~$g_i$ define a~filtration-preserving linear automorphism $g\colon {{\mathbf A}}\to {{\mathbf A}}$.

Finally, it is easy to check that these assignments are mutually inverse, as claimed.
\end{proof}

\begin{Remark} If $T$ is any $s$-invariant linear functional on ${\mathbf A}$ such that the form $({\mathbf a},{\mathbf b})_T=T({\mathbf a}{\mathbf b})$ is nondegenerate on each $F_i{\mathbf A}$ (which clearly happens for ``random'' $T$) then for each~$i$ we can define a linear automorphism $g_i$ of $F_i({\mathbf A})$ such that $T({\textbf{ab}})=T({\mathbf b}g_i({\mathbf a}))$. However, in general $g_i|_{F_{i-1}{\mathbf A}}\ne g_{i-1}$ and $g_i$ does not preserve~$F_{i-1}{\mathbf A}$ (so the automorphism~$g$ of ${\mathbf A}$ restricting to~$g_i$ on each $F_i{\mathbf A}$ is not defined). Thus, the left and right orthogonal complements of~$F_{i-1}{\mathbf A}$ in~$F_i{\mathbf A}$ do not coincide, and one cannot define a quantization map giving a short star-product. The condition that~$g$ is well defined is a very strong restriction of~$T$, and we will see that it usually leads to a finite dimensional space of possible~$T$.
\end{Remark}

Let ${\mathbf A}g$ be the ${\mathbf A}$-bimodule which is ${\mathbf A}$ as a left module and ${\mathbf A}$ with action twisted by $g$ as a right module.

\begin{Corollary}\label{bun} Let $S=S({\mathbf A})$ be the set of nondegenerate short star-products corresponding to a filtered quantization ${{\mathbf A}}$ of $A$. Then we have a map $\pi\colon S\to \operatorname{Aut}({{\mathbf A}})$ from $S$ to the group of filtration-preserving $s$-invariant automorphisms of ${{\mathbf A}}$ whose fiber $\pi^{-1}(g)$ is a subset of the space $(HH_0({{\mathbf A}},{{\mathbf A}}g)^s)^*$ dual to the $s$-invariants in the Hochschild homology $HH_0({{\mathbf A}},{{\mathbf A}}g)$.
\end{Corollary}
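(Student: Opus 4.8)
The plan is to unpack the definitions from Proposition~\ref{kon} and observe that a nondegenerate short star-product corresponding to a fixed automorphism $g$ is exactly the data of a nondegenerate $s$-invariant $g$-twisted trace $T$ with $T(1)=1$, and then to identify the space of all $g$-twisted traces (dropping the nondegeneracy and normalization conditions) with a Hochschild homology group. First I would define $\pi$: by Proposition~\ref{kon}, each nondegenerate short star-product $*$ gives a pair $({{\mathbf A}},g,T)$, and since the quantization ${{\mathbf A}}$ is fixed, the only remaining data is $(g,T)$; set $\pi(*):=g$. The fact that $g$ is indeed a filtration-preserving $s$-invariant algebra automorphism is already established (the Lemma after Lemma~\ref{ide} together with the remarks preceding and inside the proof of Proposition~\ref{kon}, using that $T$ is $s$-invariant so $g$ commutes with $s$).

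Next I would identify the fiber $\pi^{-1}(g)$. By Proposition~\ref{kon}, $\pi^{-1}(g)$ is in bijection with the set of nondegenerate $s$-invariant $g$-twisted traces $T$ with $T(1)=1$. The key algebraic observation is that a linear functional $T\colon {{\mathbf A}}\to{\mathbb C}$ is a $g$-twisted trace, i.e.\ $T({\mathbf a}{\mathbf b})=T({\mathbf b}g({\mathbf a}))$ for all ${\mathbf a},{\mathbf b}$, if and only if $T$ annihilates the subspace spanned by all elements ${\mathbf a}{\mathbf b}-{\mathbf b}g({\mathbf a})$; but this subspace is precisely the image of the commutator-type map defining the zeroth Hochschild homology of ${{\mathbf A}}$ with coefficients in the bimodule ${{\mathbf A}}g$. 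Hence the space of $g$-twisted traces is canonically $HH_0({{\mathbf A}},{{\mathbf A}}g)^*$, and imposing $s$-invariance of $T$ (equivalently, that $T$ factors through the $s$-coinvariants, equivalently that it lies in the dual of the $s$-invariants of $HH_0$ since $s$ has order $2$ and we are over ${\mathbb C}$) cuts this down to $(HH_0({{\mathbf A}},{{\mathbf A}}g)^s)^*$. Since $\pi^{-1}(g)$ consists of those $T$ in this space which moreover satisfy the open nondegeneracy condition and the affine-linear normalization $T(1)=1$, it is in particular a subset of $(HH_0({{\mathbf A}},{{\mathbf A}}g)^s)^*$, as claimed.

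The remaining point to check carefully is that $s$-invariance of the functional $T$ corresponds to passing to the $s$-invariant part of $HH_0$ in the dual. Here one uses that $s$ acts on ${{\mathbf A}}$, commutes with $g$ (automatic once $T$ is $s$-invariant, but for the general identification one restricts to $g$ commuting with $s$, which is the relevant case), hence acts on the bimodule ${{\mathbf A}}g$ and on $HH_0({{\mathbf A}},{{\mathbf A}}g)$; since $s^2=\mathrm{id}$ and $\mathrm{char}\,{\mathbb C}=0$, the module $HH_0$ decomposes into $\pm1$ eigenspaces, and the $s$-invariant functionals are exactly the dual of the $+1$ eigenspace $HH_0({{\mathbf A}},{{\mathbf A}}g)^s$. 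I expect the only mild subtlety — the ``main obstacle'', though it is not serious — to be the bookkeeping of filtrations: ${{\mathbf A}}$ is filtered rather than graded, so one should note that $HH_0({{\mathbf A}},{{\mathbf A}}g)$ is understood as the honest (unfiltered) quotient and that the nondegeneracy condition on $T$ (nondegeneracy of $(\,,\,)_T$ on each $F_i{{\mathbf A}}$) is an extra condition carving out the actual fiber inside the linear space; since the corollary only asserts an inclusion of the fiber into $(HH_0({{\mathbf A}},{{\mathbf A}}g)^s)^*$, none of this needs to be resolved here, and the proof reduces to the two identifications above.
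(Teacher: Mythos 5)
Your proof is correct and follows essentially the same route as the paper: the statement is a direct unwinding of Proposition~\ref{kon} together with the observation that $s$-invariant $g$-twisted traces on ${\mathbf A}$ are by definition elements of $(HH_0({\mathbf A},{\mathbf A}g)^s)^*$. The paper states this in one sentence; your version simply spells out the identification of ${\mathbf A}g/[{\mathbf A},{\mathbf A}g]$ with the quotient of ${\mathbf A}$ by the span of ${\mathbf a}{\mathbf b}-{\mathbf b}g({\mathbf a})$ and the passage from $s$-invariant functionals to the dual of $s$-invariants, which is exactly the intended reading.
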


\begin{proof} This follows immediately from Proposition~\ref{kon}, since by definition, an $s$-invariant $g$-twisted trace on ${{\mathbf A}}$ is an element of $(HH_0({{\mathbf A}},{{\mathbf A}}g)^s)^*$.
\end{proof}

\begin{Corollary} \label{bun1} Suppose $A=\mathcal{O}(X)$, where $X$ is a conical symplectic
singularity. Then any nondegenerate short star-product corresponding to $g\in \operatorname{Aut}({{\mathbf A}})$
is invariant with respect to the connected component~$Z_g^0$ of the centralizer~$Z_g$
of~$g$ in $\operatorname{Aut}({{\mathbf A}})$. In particular, any nondegenerate even short star-product
is invariant under $\operatorname{Aut}({\mathbf A})^0$.
\end{Corollary}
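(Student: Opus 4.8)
The plan is to convert invariance of $*$ under an automorphism $h$ of ${\mathbf A}$ — i.e., the statement that $\operatorname{gr}(h)$ is an algebra automorphism of $(A,*)$ — into invariance of the corresponding twisted trace, and then to verify that invariance infinitesimally, using that the relevant derivations of ${\mathbf A}$ are all inner. By Proposition~\ref{kon}, $*$ corresponds to a nondegenerate $s$-invariant $g$-twisted trace $T$ with $T(1)=1$. First I would check that $\operatorname{Aut}({\mathbf A})$ acts on the set of nondegenerate short star-products, with $h$ sending the star-product attached to $T$ to the one attached to $T\circ h^{-1}$, and that $T\circ h^{-1}$ is a nondegenerate $s$-invariant $(hgh^{-1})$-twisted trace taking value $1$ at $1$. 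Consequently, for $h$ in the centralizer $Z_g$ we have $hgh^{-1}=g$, and the uniqueness in Proposition~\ref{kon} shows $*$ is $h$-invariant iff $T\circ h=T$. Since $\operatorname{Aut}({\mathbf A})$ is an algebraic group — via $\operatorname{gr}$ it embeds into the graded Poisson automorphism group of $A$, as in the Remark after Lemma~\ref{inn} — the group $Z_g^0$ is connected, so it is enough to show $T$ is killed by the infinitesimal action: $T\circ D=0$ for every $D\in\operatorname{Lie}(Z_g)$, i.e., every filtration-preserving $s$-commuting derivation $D$ of ${\mathbf A}$ with $gDg^{-1}=D$.

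The heart of the proof uses that $X$ is a conical symplectic singularity twice. First, by Lemma~\ref{inn} and the degree-bookkeeping of the Remark after it, every filtration-preserving $s$-commuting derivation of ${\mathbf A}$ is inner, $D=[\widetilde H,-]$ with $\widetilde H\in F_2{\mathbf A}$: the degree-$0$ part of $\operatorname{gr} D$ is a degree-$0$ Poisson derivation of $A$, hence $\{H_0,-\}$ with $H_0\in A_2$; after subtracting the commutator with a lift of $H_0$, the remaining derivation strictly lowers filtration degree, and its successive leading terms are Poisson derivations of $A$ of negative degree (Hamiltonians with Hamiltonian in $A_{\le 1}$), which vanish in degree $\le -2$ and are excluded in degree $-1$ by $s$-equivariance, so the remainder is $0$. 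Second, $Z({\mathbf A})={\mathbb C}$, because the Poisson centre of $\mathcal O(X)$ is ${\mathbb C}$ (a Poisson-central function is constant on the dense symplectic leaf of $X$); hence $gDg^{-1}=D$, which reads $[g(\widetilde H),-]=[\widetilde H,-]$, forces $g(\widetilde H)-\widetilde H=c\in{\mathbb C}$. Then a short computation with the $g$-twisted trace identity $T({\mathbf a}\widetilde H)=T(\widetilde Hg({\mathbf a}))$ and the $g$-invariance $T=T\circ g$ gives $T([\widetilde H,{\mathbf a}])=cT({\mathbf a})$ for all ${\mathbf a}\in{\mathbf A}$; setting ${\mathbf a}=1$ yields $c=T(D(1))=0$, so $T\circ D=0$ and $*$ is $Z_g^0$-invariant. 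For the last assertion, for an even short star-product shortness forces ${\rm CT}({\mathbf a}*{\mathbf b})=0$ unless $\deg{\mathbf a}=\deg{\mathbf b}=n$, where it equals $C_n({\mathbf a},{\mathbf b})=(-1)^nC_n({\mathbf b},{\mathbf a})={\rm CT}({\mathbf b}*s({\mathbf a}))$, so the twist is $g=s$; as $s$ is central in $\operatorname{Aut}({\mathbf A})$ we get $Z_s^0=\operatorname{Aut}({\mathbf A})^0$.

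The step I expect to be the main obstacle is identifying $\operatorname{Lie}(Z_g)$ with inner derivations coming from Hamiltonians of degree $\le 2$, together with the accompanying fact that $\operatorname{Aut}({\mathbf A})$ is a genuine algebraic group so that the infinitesimal reduction is legitimate; these are refinements of the argument already carried out in the Remark after Lemma~\ref{inn}, but require care with filtrations and $s$-eigenvalues. Once they are in hand, the twisted-trace computation is entirely mechanical.
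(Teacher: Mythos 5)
Your proposal is correct and follows essentially the same route as the paper's own argument: reduce to the infinitesimal statement that $\operatorname{Lie}Z_g$ fixes $T$ (equivalently, acts trivially on $HH_0({\mathbf A},{\mathbf A}g)$), realize an element of $\operatorname{Lie}Z_g$ as an inner derivation $[\widetilde H,-]$ with $\widetilde H\in F_2{\mathbf A}$ via Lemma~\ref{inn}, deduce $g(\widetilde H)-\widetilde H$ is a scalar $c$ from the uniqueness of $\widetilde H$ up to constants, and use the twisted-trace identity together with $T(1)\ne 0$ to force $c=0$, so that $[\widetilde H,{\mathbf a}]=\widetilde H{\mathbf a}-{\mathbf a}g(\widetilde H)$ lies in the twisted-commutator subspace. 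You spell out several steps the paper leaves implicit (the translation between invariance of $*$ and invariance of $T$ via the uniqueness in Proposition~\ref{kon}; the degree-reduction with $s$-equivariance establishing that filtered $s$-commuting derivations of ${\mathbf A}$ are inner with Hamiltonian in $F_2{\mathbf A}$; the pro-algebraicity of $\operatorname{Aut}({\mathbf A})$ needed to pass from Lie-algebra invariance to $Z_g^0$-invariance), but the substance is the same; the only superficial difference is that the paper handles $c\ne 0$ by observing the conclusion is then vacuous, while you directly deduce $c=0$ from $T(1)=1$.
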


\begin{proof} It suffices to show that the Lie algebra $\operatorname{Lie}Z_g$ acts trivially on
$HH_0({\mathbf A},{\mathbf A} g)$. Let $z\in \operatorname{Lie}Z_g$. By Lemma~\ref{inn}, there exists a unique ${\mathbf z}\in F_2{\mathbf A}$ up to adding a constant such that $z({\mathbf a})=[{\mathbf z},{\mathbf a}]$ for all ${\mathbf a}\in {\mathbf A}$. Then $g({\mathbf z})={\mathbf z}+C$ for some constant $C$, so $C=g({\mathbf z})-{\mathbf z}$. Thus if $C\ne 0$ then for any $g$-twisted trace $T$ we have $T(1)=0$, so there are no nondegenerate short star-products at all. On the other hand, if $C=0$ then we have $[{\mathbf z},{\mathbf a}]={\mathbf z}{\mathbf a}-{\mathbf a}g({\mathbf z})$, so ${\mathbf z}$ acts trivially on $HH_0({\mathbf A},{\mathbf A} g)$, as claimed. The last statement follows from the fact that in the even case $g=s$, and~$s$ by definition is central in~$\operatorname{Aut}({{\mathbf A}})$.
\end{proof}

\subsection{The even case}

\begin{Proposition}\label{criteven} A short nondegenerate star-product~$*$ on~$A$ is even if and only if the corresponding automorphism $g$ of the associated even quantization ${\mathbf A}$ of $A$ equals $s$ and the corresponding nondegenerate trace $T$ on ${\mathbf A}$ is $\sigma$-invariant.
\end{Proposition}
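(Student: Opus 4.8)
The plan is to translate evenness of $*$ into statements about the associated quantization $\mathbf A=(A,*)$ and the trace $T(\mathbf a)={\rm CT}(\phi^{-1}(\mathbf a))$ from Proposition~\ref{kon}. First I would unwind the definitions: $*$ is even means $C_k(a,b)=(-1)^kC_k(b,a)$, equivalently (by Proposition in Section~2.3) the antiautomorphism $\sigma={\rm i}^d$ of $(A,*)$ satisfies $\sigma^2=s$; conversely an even quantization with its $\sigma$ produces an even star-product. So the content to be established is the precise relationship between $\sigma$, the twist $g$, and $T$. The key computation is to compare the bilinear form $\langle a,b\rangle={\rm CT}(a*b)$ with the form one gets after applying $\sigma$. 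Using $\operatorname{gr}\sigma={\rm i}^d$ and the fact that $\sigma$ is an antiautomorphism, for homogeneous $a\in A_n$, $b\in A_m$ with $n=m$ (the only case giving a nonzero pairing, by shortness) we have $\sigma(a*b)=\sigma(b)*\sigma(a)$ and on the constant-term level ${\rm CT}\sigma={\rm CT}$ up to the scalar ${\rm i}^{0}=1$. Hence evenness should be equivalent to the symmetry/antisymmetry pattern of $\langle\,,\,\rangle$ restricted to each $A_i$, namely $\langle a,b\rangle=(-1)^i\langle b,a\rangle$ for $a,b\in A_i$, which is exactly what $C_k(a,b)=(-1)^kC_k(b,a)$ gives when one extracts the degree-zero part of $a*b$ with $\deg a=\deg b=i$ (only $k=i$ contributes).

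Next I would connect this to the twist $g$. Recall $(\mathbf a,\mathbf b)_T=T(\mathbf b\, g(\mathbf a))$, so $g$ is the operator implementing the ``transpose'' of the form $(\,,\,)_T$; since under $\phi$ this form is identified with $\beta(a,b)=\langle a,b\rangle$, the automorphism $g$ measures exactly the failure of $\langle\,,\,\rangle$ to be symmetric. Concretely, on $A_i$ one has $\langle a,b\rangle=\langle b,\tilde g(a)\rangle$ where $\tilde g=\phi^{-1}g\phi$; comparing with the evenness identity $\langle a,b\rangle=(-1)^i\langle b,a\rangle$ forces $\tilde g=(-1)^i=s$ on $A_i$, i.e.\ $g=s$ on $\mathbf A$. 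For the converse direction, if $g=s$ then $(\mathbf a,\mathbf b)_T=T(\mathbf b\,s(\mathbf a))$, and running the shortness argument of Proposition~\ref{kon} backwards one recovers $C_k(a,b)=(-1)^kC_k(b,a)$ provided additionally the form is compatible with $\sigma$, which is where the second condition enters: $\sigma$-invariance of $T$, $T(\sigma(\mathbf a))=T(\mathbf a)$, is precisely what upgrades ``$g=s$'' to the full evenness of all the $C_k$, not just the constant-term pairing. I would verify this by the standard manipulation $T(\mathbf a\mathbf b)=T(\sigma(\mathbf a\mathbf b))=T(\sigma(\mathbf b)\sigma(\mathbf a))$ and using that $\sigma$ is filtered with $\operatorname{gr}\sigma={\rm i}^d$ to match degrees.

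The two conditions in the statement ($g=s$ and $T$ is $\sigma$-invariant) are therefore obtained as: (i) $g=s$ from the symmetry type of $\langle\,,\,\rangle$ in each degree, which comes from even-ness of $*$ at the level of the leading (constant) term; and (ii) $\sigma$-invariance of $T$ from even-ness of $*$ in all lower-order terms, packaged as compatibility of $\phi$ with ${\rm i}^d$. The main obstacle I anticipate is bookkeeping the interaction between the filtration, the grading, and the powers of ${\rm i}$: one must be careful that $\sigma$ is only a \emph{filtered} antiautomorphism with $\operatorname{gr}\sigma={\rm i}^d$ (not literally ${\rm i}^d$ on the nose in $\mathbf A$, though it is after identifying $\mathbf A$ with $(A,*)$ via $\phi^T$), and that the quantization map $\phi^T$ produced in Proposition~\ref{kon} is automatically $\sigma$-invariant exactly when $T$ is — this last compatibility (that the canonical orthogonal complements $\mathbf A_i^T$ are $\sigma$-stable) is the crux, and it should follow because $\sigma$ preserves the filtration and, when $T$ is $\sigma$-invariant, preserves the form $(\,,\,)_T$ up to the scalar ${\rm i}^{2i}=(-1)^i$ on $F_i\mathbf A$, hence preserves orthogonal complements. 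Once that is in hand, the equivalence with the Proposition in Section~2.3 (even star-products $\leftrightarrow$ even quantizations with a $\sigma$-invariant quantization map) closes the argument.
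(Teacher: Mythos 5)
Your proposal is correct and takes a genuinely different (and arguably more transparent) route from the paper's in the converse direction. The paper's converse argument is a direct chain of identities
\[
T(\mathbf{a}\mathbf{b}\mathbf{c})
 = T(\sigma(\mathbf{a}\mathbf{b}\mathbf{c}))
 = T(\sigma(\mathbf{c})\sigma(\mathbf{b})\sigma(\mathbf{a}))
 = T\bigl(\sigma(\mathbf{b})\sigma(\mathbf{a})\sigma^{-1}(\mathbf{c})\bigr)
 = (-1)^{k} T(\mathbf{b}\mathbf{a}\mathbf{c}),
\]
but the final equality already uses that $\sigma$ acts by the scalar ${\rm i}^{\deg}$ on the image of each homogeneous piece under $\phi^{T}$ — in other words it silently uses the very $\sigma$-stability of the canonical complements $\mathbf{A}_{i}^{T}$ that you identify as the crux. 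Your plan makes that step explicit: show that $\sigma$-invariance of $T$ forces $\mathbf{A}_{i}^{T}$ to be $\sigma$-stable, conclude that $\phi^{T}$ is $\sigma$-equivariant, and then invoke the bijection of Proposition 2.7 (even star-products $\leftrightarrow$ even quantizations with a $\sigma$-invariant quantization map). What your route buys is a clean conceptual reduction to an already-established equivalence and an explicit justification of the paper's implicit step; what the paper's route buys is brevity. One small correction to your formulation: the statement ``$\sigma$ preserves $(\,,\,)_{T}$ up to the scalar ${\rm i}^{2i}=(-1)^{i}$ on $F_{i}\mathbf{A}$'' is not quite right on all of $F_{i}\mathbf{A}$ (it would be circular to assert it there). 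The correct and sufficient observation is that, since $\sigma$ is an antiautomorphism and $T$ is $\sigma$-invariant, $\sigma$ sends the form to its transpose: $(\sigma\mathbf{a},\sigma\mathbf{b})_{T}=T(\sigma(\mathbf{b}\mathbf{a}))=(\mathbf{b},\mathbf{a})_{T}$. Since the left and right orthogonal complements of $F_{i-1}\mathbf{A}$ in $F_{i}\mathbf{A}$ coincide (a fact already established in the proof of Proposition~\ref{kon}), this transpose-equivariance is exactly what is needed to conclude that $\sigma$ preserves $\mathbf{A}_{i}^{T}$; the scalar $(-1)^{i}$ then appears only \emph{a posteriori}, once you restrict to $\mathbf{A}_{i}^{T}\times\mathbf{A}_{i}^{T}$. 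For the forward direction you should also make explicit that nondegeneracy in every degree $j$, combined with the orthogonality of the $A_{j}$ under $\langle\,,\,\rangle$, kills all components of $\phi^{-1}g\phi(a)$ outside degree $i$, so that $\phi^{-1}g\phi = (-1)^{d}$ on all of $A$ and hence $g=s$ via the $s$-equivariance of $\phi^{T}$; and that $\sigma$-invariance of $T$ follows from $\sigma$-equivariance of $\phi^{T}$ via $T(\sigma\mathbf{a})={\rm CT}({\rm i}^{d}\phi^{-1}(\mathbf{a}))={\rm CT}(\phi^{-1}(\mathbf{a}))=T(\mathbf{a})$, since ${\rm i}^{d}$ is the identity on $A_{0}$.
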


\begin{proof} Suppose $a,b\in A$, $\deg(a)=\deg(b)=d$. Then if $*$ is even then
\begin{gather*}
{\rm CT}(a*b)=C_d(a,b)=(-1)^dC_d(b,a)=C_d\big(b,(-1)^da\big)={\rm CT}\big(b*(-1)^da\big),
\end{gather*}
hence $g=(-1)^d=s$. Also $T$ is $\sigma$-invariant since so is $\phi$.

Conversely, suppose $g=s$ and $T$ is $\sigma$-invariant. Let $a\in A_m$, $b\in A_n$, $c\in A_{m+n-2k}$.
Let $\phi(a)={\mathbf a}$, $\phi(b)={\mathbf b}$, $\phi(c)={\mathbf c}$. We have
\begin{gather*}
T(\phi(a*b){\mathbf c})=T({\mathbf a}{\mathbf b}{\mathbf c})=T(\sigma({\mathbf a}{\mathbf b}{\mathbf c}))=
T(\sigma({\mathbf c})\sigma({\mathbf b})\sigma({\mathbf a}))\\
\hphantom{T(\phi(a*b){\mathbf c})}{}
=T\big(\sigma({\mathbf b})\sigma({\mathbf a})\sigma^{-1}({\mathbf c})\big)=(-1)^kT({\mathbf b}{\mathbf a}{\mathbf c})=T\big(\phi\big((-1)^kb*a\big){\mathbf c}\big).
\end{gather*}
Thus $C_k(a,b)=(-1)^kC_k(b,a)$, i.e., $*$ is even.
\end{proof}

Let ${{\mathbf A}}_\pm$ be the $\pm 1$-eigenspaces of $s$ on ${{\mathbf A}}$.

\begin{Corollary}\label{criteven1} A star-product $*$ is even if and only if $T$ is $\sigma$-stable
and defines an element of $(HH_0({{\mathbf A}},{{\mathbf A}}s)^s)^*$, where
 \begin{gather*}
 HH_0({{\mathbf A}},{{\mathbf A}}s)^s=({{\mathbf A}}s/[{{\mathbf A}},{{\mathbf A}}s])^s=
 {{\mathbf A}}_+/([{{\mathbf A}}_+,{{\mathbf A}}_+]+\lbrace{{{\mathbf A}}_-,{{\mathbf A}}_-\rbrace}),
 \end{gather*}
 and $\lbrace{{\mathbf a},{\mathbf b}\rbrace}:={\mathbf{ab}}+{\mathbf{ba}}$.
\end{Corollary}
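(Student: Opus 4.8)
The plan is to combine Proposition~\ref{criteven} with the computation of $s$-invariants in Hochschild homology. By Proposition~\ref{criteven}, a short nondegenerate star-product is even if and only if $g=s$ and $T$ is $\sigma$-invariant. Given $g=s$, Corollary~\ref{bun} already identifies the nondegenerate $s$-invariant $g$-twisted traces with a subset of $(HH_0({{\mathbf A}},{{\mathbf A}}s)^s)^*$, so the remaining task is purely algebraic: to rewrite the space $HH_0({{\mathbf A}},{{\mathbf A}}s)^s$ in the explicit form claimed, namely $({{\mathbf A}}s/[{{\mathbf A}},{{\mathbf A}}s])^s={{\mathbf A}}_+/([{{\mathbf A}}_+,{{\mathbf A}}_+]+\lbrace{{\mathbf A}}_-,{{\mathbf A}}_-\rbrace)$, and to observe that the $\sigma$-invariance is the extra condition picking out the even ones.

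First I would recall that $HH_0({{\mathbf A}},{{\mathbf A}}s)={{\mathbf A}}s/[{{\mathbf A}},{{\mathbf A}}s]$ by definition of zeroth Hochschild homology with coefficients in the bimodule ${{\mathbf A}}s$ (left action standard, right action twisted by $s$), where $[{{\mathbf A}},{{\mathbf A}}s]$ is the span of ${\mathbf a}{\mathbf b}-{\mathbf b}s({\mathbf a})$. Identifying ${{\mathbf A}}s$ with ${{\mathbf A}}$ as a vector space, this reads ${{\mathbf A}}/\langle {\mathbf a}{\mathbf b}-{\mathbf b}s({\mathbf a})\rangle$. The operator $s$ acts on this quotient, and I take $s$-invariants. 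Next, decompose ${{\mathbf A}}={{\mathbf A}}_+\oplus{{\mathbf A}}_-$ into $\pm1$-eigenspaces of $s$. For homogeneous ${\mathbf a}\in{{\mathbf A}}_\epsilon$ and ${\mathbf b}\in{{\mathbf A}}_\delta$ we have ${\mathbf a}{\mathbf b}-{\mathbf b}s({\mathbf a})={\mathbf a}{\mathbf b}-\epsilon{\mathbf b}{\mathbf a}$, which is the commutator $[{\mathbf a},{\mathbf b}]$ when $\epsilon=1$ and the anticommutator $\lbrace{\mathbf a},{\mathbf b}\rbrace={\mathbf a}{\mathbf b}+{\mathbf b}{\mathbf a}$ when $\epsilon=-1$. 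A short bookkeeping argument, splitting into the four cases according to the parities of ${\mathbf a}$ and ${\mathbf b}$, shows that the span of all these elements is $[{{\mathbf A}}_+,{{\mathbf A}}_+]+\lbrace{{\mathbf A}}_-,{{\mathbf A}}_-\rbrace+[{{\mathbf A}}_+,{{\mathbf A}}_-]$, and I would check that the $s$-invariant part of the quotient is exactly ${{\mathbf A}}_+$ modulo the $s$-even part of this span, i.e.\ ${{\mathbf A}}_+/([{{\mathbf A}}_+,{{\mathbf A}}_+]+\lbrace{{\mathbf A}}_-,{{\mathbf A}}_-\rbrace)$ — noting that $[{{\mathbf A}}_+,{{\mathbf A}}_-]\subset{{\mathbf A}}_-$ contributes only to the odd part and $\lbrace{{\mathbf A}}_+,{{\mathbf A}}_-\rbrace\subset{{\mathbf A}}_-$ similarly does not interfere. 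Finally, the statement that $*$ is even precisely when $T$ is $\sigma$-stable and defines an element of this space is a direct restatement of Proposition~\ref{criteven} together with Corollary~\ref{bun}.

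The main obstacle — though it is more bookkeeping than genuine difficulty — is the careful parity analysis in the previous step: one must verify that passing to $s$-invariants commutes appropriately with taking the quotient, i.e.\ that $({{\mathbf A}}/W)^s={{\mathbf A}}_+/W_+$ where $W$ is the span of the twisted commutators and $W_+=W\cap{{\mathbf A}}_+$ is its even part, and that $W_+=[{{\mathbf A}}_+,{{\mathbf A}}_+]+\lbrace{{\mathbf A}}_-,{{\mathbf A}}_-\rbrace$. This uses that $s$ is semisimple (an involution over ${\mathbb C}$) so the eigenspace decomposition is compatible with all the operations involved, and that each generating element ${\mathbf a}{\mathbf b}-\epsilon{\mathbf b}{\mathbf a}$ lies in a single $s$-eigenspace, namely the $\epsilon\delta$-eigenspace when ${\mathbf a}\in{{\mathbf A}}_\epsilon$, ${\mathbf b}\in{{\mathbf A}}_\delta$. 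Once this is in place, the identification of the right-hand side with $\lbrace{\mathbf a},{\mathbf b}\rbrace={\mathbf{ab}}+{\mathbf{ba}}$ for the anticommutator term is immediate from the definitions.
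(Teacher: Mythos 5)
Your argument is correct and takes the same route the paper does: the paper's proof is the one-line observation that the claim follows from Corollary~\ref{bun} and Proposition~\ref{criteven}, and your detailed parity bookkeeping simply makes explicit the identification $HH_0({\mathbf A},{\mathbf A}s)^s={\mathbf A}_+/([{\mathbf A}_+,{\mathbf A}_+]+\lbrace{\mathbf A}_-,{\mathbf A}_-\rbrace)$ that the paper states but leaves unverified.
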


\begin{proof} This follows immediately from Corollary \ref{bun} and Proposition \ref{criteven}.
\end{proof}

\subsection{Examples}

\begin{Example}\label{repth1} Let $V$ be a finite dimensional symplectic vector space. Let us classify nondegenerate short star-products for the polynomial algebra $A=SV$ with the standard grading and Poisson structure. The quantization ${{\mathbf A}}$ in this case is the Weyl algebra ${\mathbf{W}}(V)$. Its group of filtered automorphisms commuting with $s$ is $G={\rm Sp}(V)$.

Let $g\in G$, and let us compute the space ${\mathcal T}:=HH_0({{\mathbf A}},{{\mathbf A}}g)^*$. First, we claim that if $g$ has eigenvalue $1$ then this space vanishes. Indeed, let $u\in V$ be a nonzero vector such that $g u=u$. Then for $T\in {\mathcal T}$ we have $T([u,{\mathbf b}])=0$ for all ${\mathbf b}\in {\mathbf A}$. It is easy to see that any element ${\mathbf a}\in {{\mathbf A}}$ can be written as $[u,{\mathbf b}]$. Thus, $T=0$, as desired.

Now consider the case when $g$ has no eigenvalue $1$. In this case, $T({\mathbf{ab}}-{\mathbf b}g({\mathbf a}))=0$. Let $E\subset {{\mathbf A}}$ be the span of elements ${\mathbf{ab}}-{\mathbf b}g({\mathbf a})$ for any ${\mathbf a},{\mathbf b}\in {\mathbf A}$. Taking $a,b\in A$ homogeneous and ${\mathbf a}$, ${\mathbf b}$ their lifts to ${\mathbf A}$, we see that $\operatorname{gr}E$ contains elements $(a-g(a))b$, where $a,b\in A$ are homogeneous. Taking $\deg(a)=1$, we find that $\operatorname{gr}E$ contains the augmentation ideal of~$SV$, which shows that $\dim {\mathcal T}\le 1$.

Let us show that in fact $\dim {\mathcal T}=1$. For this purpose it suffices to produce a nonzero $g$-twisted trace. To this end, let $B:=(1+g)(1-g)^{-1}$ be the (symplectic) Cayley transform of $g$; then $B\in {\mathfrak{sp}}(V)$ and $g=(B+1)^{-1}(B-1)$, so $B+1$ and $B-1$ are invertible. Hence we have the
Moyal--Weyl product given by~\eqref{nsmoyal}, which is short and nondegenerate. As we have shown, such a star-product defines a nondegenerate $g$-twisted trace~$T$. Thus we have $\mathcal{T}=\mathcal{T}^s={\mathbb C}T$, a~1-dimensional space, i.e., we have a unique star-product for each $g$.

In summary, we see that short nondegenerate star-products on $SV$ are exactly the Moyal--Weyl products parametrized by elements of $g\in {\rm Sp}(V)$ without eigenvalue $1$, or, equivalently, elements of $B\in {\mathfrak{sp}}(V)$ without eigenvalues $1$ and $-1$. The two parametrizations are related by the Cayley transform. The even case corresponds to $g=-1$, i.e., $B=0$, i.e., the symmetric Moyal--Weyl product is the unique even short nondegenerate star-product.

Here is an explicit construction of the corresponding twisted trace. Assume first that $g$ has no eigenvalues of absolute value~$1$. Let $L\subset V$ be the direct sum of generalized eigenspaces of $g$ with eigenvalues $\lambda$ such that $|\lambda|<1$. Then the Weyl algebra~${\mathbf A}$ is naturally identified with the algebra $D(L^*)$ of differential operators on~$L^*$, which acts naturally on $M:=SL={\mathbb C}[L^*]$. Then the trace~$T$ can be defined by the formula
\begin{gather*}
T({\mathbf a})=\frac{\operatorname{Tr}_M({\mathbf a} g)}{\operatorname{Tr}_M(g)}={\det}_L(1-g)\operatorname{Tr}_M({\mathbf a} g).
\end{gather*}
Note that the trace is convergent since the eigenvalues of~$g$ on~$L$ have absolute value~$<1$. Moreover, it is easy to check that the right hand side is a rational function of~$g$ which is regular on the locus where~$g$ has no eigenvalue~$1$. Thus the above formula understood as a rational function of~$g$ gives the desired twisted trace in the general case.
\end{Example}

\begin{Example}\label{repth}Let ${{\mathbf A}}=D_\lambda:={\rm U}(\mathfrak{sl}_2)/I_\lambda$, where
$I_\lambda$ is the ideal generated by $C-\frac{\lambda(\lambda+2)}{2}$, where $C=ef+fe+h^2/2$ is the Casimir. Then $A$ is the algebra of regular functions on the quadratic cone $X\subset {\mathbb C}^3$,
$A={\mathbb C}[x,y,z]/\big(xy-z^2\big)$, with the variables having degree~$2$. Let us classify short nondegenerate star-products giving this quantization.

Let us start with the even case. We have $s=1$ and ${\mathbf A}={\mathbf A}_+$, so by Corollary~\ref{criteven1}, the star-product gives rise to a trace in $HH_0({\mathbf A},{\mathbf A})^*$, which is well known to be 1-dimensional. This trace is therefore $\sigma$-stable and also nondegenerate for Weil generic $\lambda$ (i.e., outside of a countable set), since it is nondegenerate for $\lambda=-1/2$,
when $D_\lambda$ is isomorphic to the even part of the Weyl algebra, ${\mathbf A}_1^{{\mathbb Z}/2}$, so we can use the symmetric Moyal--Weyl product. This nondegenerate trace gives rise to the unique ${\rm PGL}(2)$-invariant short star-product, which is thus even and nondegenerate.

However, for special values of $\lambda$ (namely, $\lambda\in {\mathbb Z}$, $\lambda\ne -1$) this trace is degenerate. Indeed, we may assume $\lambda\in {\mathbb Z}_{\ge 0}$, then the trace is given by the character of the finite dimensional irreducible representation $L_\lambda$ of ${\mathfrak{sl}}_2$ with highest weight $\lambda$, and the annihilator of this representation is the kernel of $(\,,\,)_T$. The short even star-product still exists at these points (by continuity), namely it comes from the unique
${\rm PGL}(2)$-invariant quantization map, but it is degenerate.

In fact, it is not hard to see that outside of these special points, the star-product is nondegenerate.
Indeed, it is clear that the bilinear form $C_{2m}$ on $A_{2m}$ is the standard invariant pairing
$V_{2m}\otimes V_{2m}\to {\mathbb C}$ times a polynomial $P_m(\lambda)$ of degree~$2m$.
Moreover, as explained above, this polynomial vanishes at $0,\dots,m-1$ and $-2,\dots,-m-1$,
so it is a nonzero multiple of $\prod\limits_{j=0}^{m-1}(\lambda-j)(\lambda+j+2)$, i.e.,
it does not vanish at any other points, as claimed.

Now let $g\ne 1$. We have $\operatorname{Aut}({\mathbf A})={\rm PGL}_2({\mathbb C})$, so $g\in {\rm PGL}_2({\mathbb C})$. First consider the case when~$g$ is semisimple (so we may assume that~$g$ is in the standard torus).
Assume that $|\alpha(g)|>1$ for the positive root~$\alpha$. Let $M_\lambda$ be the Verma module
over $\mathfrak{sl}_2$ with highest weight~$\lambda$, which is also a $D_\lambda$-module. Consider the linear functional $T_\lambda^w$ on $D_\lambda$ given by
\begin{gather*}
T_\lambda^w({\mathbf a}):=\frac{\operatorname{Tr}|_{M_\lambda}({\mathbf a} g)}{\operatorname{Tr}|_{M_\lambda}(g)}=
(1-w)\sum_{n\ge 0}(v_n^*,{\mathbf a} v_n)w^n,
\end{gather*}
where $w=\alpha(g)^{-1}$, $v_n=f^nv$ is a homogeneous basis of $M_\lambda$ (where $v$ is a highest weight vector) and $v_n^*$ is the dual basis of the restricted dual space $M_\lambda^\vee$. It is easy to show that this series is convergent to a rational function of $w$ regular for $w\ne 1$, and thus defines a $g$-twisted trace on~$D_\lambda$. We also have the $g$-twisted trace $T_{-\lambda-2}^w$, since by definition
$D_\lambda=D_{-\lambda-2}$. Moreover, we have $\mathcal{O}(X^g)={\mathbb C}[z]/\big(z^2\big)$, so by Proposition~\ref{finde1} below, the space of $g$-twisted traces $HH_0({\mathbf A},{\mathbf A}g)^*$ is at most 2-dimensional. On the other hand, for $\lambda\ne -1$, the traces $T_\lambda^w$ and $T_{-\lambda-2}^w$ are linearly independent, since $T_\lambda^w(1)=1$ and $T_\lambda^w(h)=\lambda-\frac{2w}{1-w}$, thus they form a basis in the space of $g$-twisted traces. Moreover, at $\lambda=-1$ we have the trace
$\big(T_{-1}^w\big)':=\lim\limits_{\varepsilon\to 0}\frac{T_{-1+\varepsilon}^w-T_{-1-\varepsilon}^w}{\varepsilon}$, and
it is easy to see that $T_{-1}^w$, $\big(T_{-1}^w\big)'$ form a basis in the space of traces
when $\lambda=-1$.

Moreover, for $\lambda\ne -1$ the trace $T_\lambda$ corresponding $g=1$ can be obtained by the formula
\begin{gather*}
T_\lambda({\mathbf a})=\frac{1}{\lambda+1}\lim_{w\to 1}\frac{T_\lambda^w({\mathbf a})-w^{\lambda+1}T_{-\lambda-2}^w({\mathbf a})}{1-w}.
\end{gather*}
Indeed, if $\lambda$ is a nonnegative integer, this equals $\frac{1}{\lambda+1}\operatorname{Tr}|_{L_\lambda}({\mathbf a})$. The case $\lambda=-1$ is then obtained as a limit, using that this is in fact a polynomial in $\lambda$. For example, for ${\mathbf a}=h^2$ we get
\begin{gather*}
\operatorname{Tr}|_{L_\lambda}\big(h^2\big)=2\sum_{0\le j\le \lambda/2}(\lambda-2j)^2=\frac{\lambda(\lambda+1)(\lambda+2)}{3}.
\end{gather*}
Thus
\begin{gather*}
T_\lambda\big(h^2\big)=\frac{\lambda(\lambda+2)}{3}.
\end{gather*}

It is easy to show that Weil generically\footnote{Here ``Weil generically'' means ``outside of a countable union of algebraic hypersurfaces in the parameter space''.} these traces are nondegenerate. Thus we obtain a~3-parameter family of nondegenerate $h$-invariant short star-products on $A$ inequivalent under ${\rm PGL}_2({\mathbb C})$, and any nondegenerate short star-product with semisimple $g$ is equivalent to one of them. Namely, the parameters are $\lambda$, $w$, and $\alpha:=T(h)$ (where $T$ is normalized, i.e., $T(1)=1$).

Finally, it remains to consider the case when $g$ is not semisimple (i.e., a Jordan block). In this case, by Proposition~\ref{finde2} below, the space of traces is at most 1-dimensional. If~$\lambda$ is a~nonnegative integer, the corresponding trace can be given by the formula
\begin{gather*}
\widetilde{T}_\lambda({\mathbf a})=\frac{1}{\lambda+1}\operatorname{Tr}|_{L_\lambda}({\mathbf a} g)
\end{gather*}
(where, abusing notation, we denote by $g$ the unipotent lift of $g$ to ${\rm SL}_2({\mathbb C})$). It is not hard to check that this is in fact a polynomial of~$\lambda$, so it can be naturally extended to any value of~$\lambda$.
\end{Example}

\subsection{Finite dimensionality of the space of nondegenerate short star-products}

\subsubsection{The case of a semisimple automorphism}

\begin{Proposition}\label{finde1} Let $A$ be a finitely generated Poisson algebra such that the Poisson scheme $X=\operatorname{Spec}A$ has finitely many symplectic leaves. Let~${\mathbf A}$ be a quantization of~$A$, and \mbox{$g\colon {{\mathbf A}}\to {{\mathbf A}}$} be a semisimple automorphism which commutes with~$s$. Then nondegenerate short star-products on~$A$, if they exist, are parametrized by points of the finite dimensional vector space \linebreak $(HH_0({{\mathbf A}},{{\mathbf A}}g)^s)^*$ which don't belong to a countable collection of hypersurfaces, modulo scaling.

Moreover, $\dim HH_0({{\mathbf A}},{{\mathbf A}}g)^s\le \dim HP_0(X^g)^s<\infty$, where $X^g$ is the fixed point scheme of~$g$ on~$X$, and $HP_0(X^g):= \mathcal{O}(X^g)/\lbrace{\mathcal{O}(X^g), \mathcal{O}(X^g)\rbrace}$ is the zeroth Poisson homology of $X^g$.\footnote{Here, abusing notation, we denote the associated graded isomorphism $\operatorname{gr}g\colon A\to A$ simply by $g$.}
\end{Proposition}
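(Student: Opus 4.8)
The plan is to combine Kontsevich's Proposition~\ref{kon} with a filtration argument comparing $HH_0({{\mathbf A}},{{\mathbf A}}g)$ to the Poisson homology of the fixed-point scheme $X^g$. By Proposition~\ref{kon}, the nondegenerate short star-products on $A$ inducing the given quantization ${{\mathbf A}}$ and automorphism $g$ correspond bijectively to nondegenerate $s$-invariant $g$-twisted traces $T$ with $T(1)=1$, and such traces are exactly the relevant elements of the vector space $V:=(HH_0({{\mathbf A}},{{\mathbf A}}g)^s)^*$. Granting for the moment that $V$ is finite dimensional, fix for each $i$ a basis of $F_i{{\mathbf A}}$; then the determinant $P_i$ of the Gram matrix of the form $({\mathbf a},{\mathbf b})_T=T({\mathbf a}{\mathbf b})$ on $F_i{{\mathbf A}}$ is a polynomial function of $T\in V$, homogeneous of degree $\dim F_i{{\mathbf A}}$, and $T$ is nondegenerate precisely when $P_i(T)\ne0$ for all $i$. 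If some $P_i$ vanishes identically there are no nondegenerate short star-products for this pair $({{\mathbf A}},g)$; otherwise each $\{P_i=0\}$ is a hypersurface, the nondegenerate traces form the complement of $\bigcup_i\{P_i=0\}$, and since the star-product depends only on the line $\C^\times T$ (and $T(1)=P_0(T)\ne0$ is automatic), this yields the claimed parametrization. It thus remains to prove the inequality $\dim HH_0({{\mathbf A}},{{\mathbf A}}g)^s\le\dim HP_0(X^g)^s<\infty$, which in particular supplies $\dim V<\infty$.

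Write $\bar g:=\operatorname{gr}g\colon A\to A$ (the map denoted $g$ in the statement). Since $g$ is a filtration-preserving algebra automorphism, $\bar g$ is a graded Poisson automorphism of $A$, and since $g$ is semisimple so is $\bar g$; let $A=\bigoplus_\mu A_\mu$ be its eigenspace decomposition, which satisfies $A_\mu A_\nu\subseteq A_{\mu\nu}$ and $\{A_\mu,A_\nu\}\subseteq A_{\mu\nu}$, with $A_1=A^{\bar g}$. One checks that the ideal $I$ of $X^g$, generated by $\{a-\bar g a:a\in A\}$, equals $\big(\bigoplus_{\mu\ne1}A_\mu\big)\oplus J$ with $J:=\sum_{\mu\ne1}A_\mu A_{\mu^{-1}}\subseteq A_1$, so that $\mathcal O(X^g)=A/I$ is identified, as an algebra, with $A^{\bar g}/J$; a short Leibniz-rule computation shows $J$ is a Poisson ideal of $A^{\bar g}$, so $\mathcal O(X^g)$ inherits the canonical Poisson bracket used in the statement. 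Because $X$ has finitely many symplectic leaves and $\bar g$ is semisimple, the fixed scheme $X^g$ has finitely many irreducible components, symplectic on the smooth locus of $X$, so $X^g$ has finitely many symplectic leaves; hence $\dim HP_0(X^g)<\infty$ by the finiteness theorem of~\cite{ES}.

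For the inequality, choose a $g$- and $s$-equivariant quantization map $\phi\colon A\to{{\mathbf A}}$ (it exists because $g,s$ are commuting semisimple operators and we are in characteristic zero), and write $HH_0({{\mathbf A}},{{\mathbf A}}g)\cong{{\mathbf A}}/E$ with $E=\operatorname{span}\{{\mathbf a}{\mathbf b}-{\mathbf b}g({\mathbf a})\}$, equipped with the quotient filtration, so that $\operatorname{gr}HH_0({{\mathbf A}},{{\mathbf A}}g)=A/\operatorname{gr}E$. If $a\in A_\mu$ with $\mu\ne1$ and $b\in A$ are homogeneous, then ${\mathbf a}{\mathbf b}-{\mathbf b}g({\mathbf a})$, for ${\mathbf a}=\phi(a)$, ${\mathbf b}=\phi(b)$, has leading term $(1-\mu)ab$ whenever that is nonzero, so in all cases $ab\in\operatorname{gr}E$; letting $a,b$ vary gives $\operatorname{gr}E\supseteq I$. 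If instead $f,h\in A^{\bar g}$ are homogeneous, then $\phi(f)$ and $\phi(h)$ are $g$-fixed, hence $\phi(f)\phi(h)-\phi(h)g(\phi(f))=[\phi(f),\phi(h)]\in E$ has leading term $\{f,h\}$ when the latter is nonzero, and trivially $\{f,h\}=0\in\operatorname{gr}E$ otherwise; so $\{f,h\}\in\operatorname{gr}E$ always, i.e. the image of $\operatorname{gr}E$ in $\mathcal O(X^g)=A^{\bar g}/J$ contains $\{\mathcal O(X^g),\mathcal O(X^g)\}$. Combining the two inclusions, $A/\operatorname{gr}E$ is a quotient of $\mathcal O(X^g)/\{\mathcal O(X^g),\mathcal O(X^g)\}=HP_0(X^g)$, whence $\dim HH_0({{\mathbf A}},{{\mathbf A}}g)=\dim A/\operatorname{gr}E\le\dim HP_0(X^g)$; running the same argument $s$-equivariantly (all maps involved commute with $s$, and $s$ is semisimple) gives $\dim HH_0({{\mathbf A}},{{\mathbf A}}g)^s\le\dim HP_0(X^g)^s$.

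The main work, and the point I expect to be the real obstacle, lies in this comparison of associated graded objects. The subtleties are: (i) $I$ is \emph{not} a Poisson ideal of $A$, so the Poisson structure on $X^g$ is only visible through the identification $\mathcal O(X^g)\cong A^{\bar g}/J$; (ii) $\operatorname{gr}E$ must be shown to be large in two directions at once — it must contain the full defining ideal $I$ and, modulo $I$, all Poisson brackets — and the second inclusion is precisely what forces the use of a $g$-equivariant quantization map to lift $\bar g$-invariants to $g$-fixed elements; and (iii) one must verify that $X^g$ again has finitely many symplectic leaves so that the finiteness input from~\cite{ES} applies.
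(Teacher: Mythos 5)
Your proof is correct and follows essentially the same route as the paper's: you pass to $E=\operatorname{span}\{{\mathbf a}{\mathbf b}-{\mathbf b}g({\mathbf a})\}$, show that $\operatorname{gr}E$ contains both the ideal $I$ of $X^g$ and the Poisson brackets of $g$-invariants, conclude that $A/\operatorname{gr}E$ is a quotient of $HP_0(X^g)$, and invoke the finiteness theorem of~\cite{ES} together with the earlier correspondence between short star-products and twisted traces. The only cosmetic difference is that you fix a single globally $g$- and $s$-equivariant quantization map and work with the eigenspace decomposition $A=\bigoplus_\mu A_\mu$, whereas the paper chooses a $g$-invariant lift only where needed (for $a\in A^g$) using that $g$ is the identity on its generalized $1$-eigenspace; the paper's formulation is marginally more general (as its subsequent remark notes, only that weaker property is actually used), but for $g$ semisimple, as in the statement, both versions are equivalent.
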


\begin{proof} Let $E$ be the span of elements ${\mathbf{ab}}-{\mathbf{b}}g({\mathbf a})$ for ${\mathbf{a}},{\mathbf{b}}\in {{\mathbf A}}$.

We claim that for any $a,b\in A$, $\operatorname{gr}(E)$ contains the element $(a-g(a))b$. Indeed, we may assume that $a$, $b$ are homogeneous of degrees $i$, $j$, and let ${{\mathbf a}}$, ${\mathbf b}$ be their lifts to ${{\mathbf A}}$. Then the leading term (of degree $i+j$) of ${{\mathbf a}} {\mathbf b}-{\mathbf b}g({{\mathbf a}})$ is $(a-g(a))b$, as desired.

\looseness=-1 Also, we claim that if $a\in A$ and $g(a)=a$ then for any $b\in A$, $\lbrace{a,b\rbrace}$ belongs to $\operatorname{gr}(E)$. Indeed, we can again assume that $a$, $b$ are homogeneous of degrees $i$, $j$. Since $g=1$ on its generalized 1-eigenspace, we can choose a lift ${\mathbf a}$ of~$a$ to~${\mathbf A}$ which is invariant under~$g$. Then choosing any lift~${\mathbf b}$ of~$b$ to~${\mathbf A}$, we see that the leading term (of degree $i+j-2$) of ${{\mathbf a}} {\mathbf b}-{\mathbf b}g({{\mathbf a}})=[{{\mathbf a}},{\mathbf b}]$ is $\lbrace{ a,b\rbrace}$, as desired.

Now let $I\subset A$ be the span of elements $(a-g(a))b$. This is the ideal of the fixed point subscheme~$X^g$. Note that $I\cap A^g$ is a Poisson ideal in $A^g$. Hence, $A/I=A^g/(I\cap A^g)$ is a Poisson algebra, so~$X^g$ is a Poisson scheme. Moreover, since $g=1$ on its generalized 1-eigenspace, the $g$-invariants in the tangent space to the symplectic leaf~$L$ at any point $P\in X^g\cap L$ form a~nondegenerate subspace, hence $X^g\cap L$ is symplectic. Thus, $X^g$ also has finitely many symplectic leaves, which are connected components of intersections of $X^g\cap L$ for various~$L$.

 Now, the space $A/\operatorname{gr}(E)$ receives a surjective $s$-invariant map from the space $HP_0(X^g)=\mathcal{O}(X^g)/\lbrace{ \mathcal{O}(X^g),\mathcal{O}(X^g) \rbrace}$. This space is finite dimensional by \cite[Theorem~1.4]{ES}. Hence the space $A/\operatorname{gr}(E)$
 is finite dimensional. Therefore the space ${{\mathbf A}}/E=HH_0({{\mathbf A}},{{\mathbf A}}g)$ is finite dimensional, and we have the inequalities
\begin{gather*}
\dim HH_0({{\mathbf A}},{{\mathbf A}}g)\le \dim HP_0(X^g),\qquad \dim HH_0({{\mathbf A}},{{\mathbf A}}g)^s\le \dim HP_0(X^g)^s.
\end{gather*}

\looseness=-2 Now the proposition follows, since by Corollary~\ref{bun}, short nondegenerate star-products corre\-spon\-ding to~$g$ are parametrized by the subset of $(HH_0({{\mathbf A}},{{\mathbf A}}g)^s)^*$ determined by the nondegenera\-cy condition, which is the vanishing condition for countably many determinant polynomials.
\end{proof}

\begin{Remark} \quad
\begin{enumerate}\itemsep=0pt
\item It is clear from the proof of Proposition~\ref{finde1} that it holds more generally, namely when~$g$ acts trivially on its generalized 1-eigenspace (this is the only consequence of semisimplicity of~$g$ used in the proof).
\item In particular, Proposition \ref{finde1} applies to the setting of \cite{L2}, namely $A=\mathcal{O}(X)$, where~$X$ is a conical symplectic singularity in the sense of Beauville. Indeed, in this case it is known that~$X$ has finitely many symplectic leaves \cite[Theorem~2.3]{Ka}. This property also holds for Higgs branches considered in~\cite{BPR} (by \cite[Section~2.3]{L3}), and is expected for Coulomb branches which are cones, so Proposition~\ref{finde1} applies to all such cases.
\end{enumerate}
\end{Remark}

\begin{Corollary}\label{finde} Let $A$ be a finitely generated algebra, and the Poisson scheme $X=\operatorname{Spec}A$ have finitely many symplectic leaves. Then nondegenerate even short star-products
on $A$ defining a~given even quantization ${{\mathbf A}}$, if they exist, are parametrized by points of
the finite dimensional vector space $(HH_0({{\mathbf A}},{{\mathbf A}}s)^\sigma)^*$ which don't belong to a countable collection of hypersurfaces, modu\-lo scaling.

Moreover, $\dim HH_0({{\mathbf A}},{{\mathbf A}}s)^\sigma\le \dim HP_0(X^s)^\sigma<\infty$.
\end{Corollary}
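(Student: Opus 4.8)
The plan is to deduce Corollary~\ref{finde} by specializing Proposition~\ref{finde1} to the automorphism $g=s$ and then cutting down to the $\sigma$-invariant locus using the description of even star-products. Since $s$ has order~$2$ (hence is semisimple) and commutes with itself, Proposition~\ref{finde1} applies with $g=s$: the nondegenerate short star-products defining ${\mathbf A}$ whose associated automorphism (in the sense of Proposition~\ref{kon}) equals $s$ are parametrized, modulo scaling, by the points of the finite dimensional space $(HH_0({\mathbf A},{\mathbf A}s)^s)^*$ lying outside a countable union of hypersurfaces, and $\dim HH_0({\mathbf A},{\mathbf A}s)^s\le \dim HP_0(X^s)^s<\infty$. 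By Proposition~\ref{criteven} (equivalently Corollary~\ref{criteven1}), among these star-products the even ones are exactly those for which the corresponding nondegenerate trace $T$ is $\sigma$-stable.

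To turn $\sigma$-stability into a linear condition I would first check that $\sigma$ descends to an \emph{involution} of $HH_0({\mathbf A},{\mathbf A}s)$: being an antiautomorphism commuting with $s$, $\sigma$ maps the span $[{\mathbf A},{\mathbf A}s]$ of the elements ${\mathbf a}{\mathbf b}-{\mathbf b}s({\mathbf a})$ into itself, so it acts on $HH_0({\mathbf A},{\mathbf A}s)$, and there $\sigma^2=s$ acts trivially. Indeed, from the proof of Proposition~\ref{finde1} the associated graded of $[{\mathbf A},{\mathbf A}s]$ contains the ideal of the fixed scheme $X^s$; hence $\mathcal{O}(X^s)$ is concentrated in even degrees, $\operatorname{gr}(s)={\rm i}^{2d}$ acts trivially on $\operatorname{gr}HH_0({\mathbf A},{\mathbf A}s)$, and since $HH_0({\mathbf A},{\mathbf A}s)$ is finite dimensional the filtration-preserving operator $s$ of order $2$ with trivial associated graded must itself be the identity (in characteristic zero). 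Writing $HH_0({\mathbf A},{\mathbf A}s)=HH_0({\mathbf A},{\mathbf A}s)^\sigma\oplus HH_0({\mathbf A},{\mathbf A}s)^{-\sigma}$ for the $\pm1$-eigenspaces of this involution, a functional $T$ is $\sigma$-stable if and only if it kills $HH_0({\mathbf A},{\mathbf A}s)^{-\sigma}$, i.e. $T\in (HH_0({\mathbf A},{\mathbf A}s)^\sigma)^*$. Intersecting the countable union of hypersurfaces from Proposition~\ref{finde1} with this subspace gives a countable union of hypersurfaces in $(HH_0({\mathbf A},{\mathbf A}s)^\sigma)^*$ --- each a \emph{proper} subvariety, since if one of them filled the whole subspace there would be no nondegenerate even short star-product at all, contrary to the standing ``if they exist'' hypothesis. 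This yields the asserted parametrization.

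It remains to refine the dimension bound. The proof of Proposition~\ref{finde1} (with $g=s$) provides, via the filtration on $HH_0({\mathbf A},{\mathbf A}s)$ induced by that on ${\mathbf A}$, a surjection $HP_0(X^s)\to A/\operatorname{gr}[{\mathbf A},{\mathbf A}s]$ and the inequalities $\dim HH_0({\mathbf A},{\mathbf A}s)=\dim\operatorname{gr}HH_0({\mathbf A},{\mathbf A}s)\le\dim A/\operatorname{gr}[{\mathbf A},{\mathbf A}s]\le\dim HP_0(X^s)$. Every term and map in this chain carries compatible symmetries: $\sigma$ on the ${\mathbf A}$-side and $\operatorname{gr}\sigma={\rm i}^d$ on the $A$-side, which agree under the identification $\operatorname{gr}{\mathbf A}=A$; and since $({\rm i}^d)^2=s$ acts as the identity on $\mathcal{O}(X^s)$ (because $s$ restricts to the identity on the fixed scheme $X^s$), ${\rm i}^d$ is an involution on $HP_0(X^s)$. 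Because averaging over these induced ${\mathbb Z}/2$-actions is exact in characteristic zero and commutes with $\operatorname{gr}$, passing to $+1$-eigenspaces throughout preserves surjectivity and the inequalities, giving $\dim HH_0({\mathbf A},{\mathbf A}s)^\sigma\le\dim HP_0(X^s)^\sigma<\infty$. The only point needing genuine care is precisely this bookkeeping of the two commuting symmetries $s$ and $\sigma$ (with $\sigma^2=s$) through the chain of surjections, together with the elementary observation that the nondegeneracy locus, being nonempty and Zariski-open, meets the linear subspace $(HH_0({\mathbf A},{\mathbf A}s)^\sigma)^*$ in the complement of a countable union of proper hypersurfaces; everything else is a direct combination of Propositions~\ref{kon}, \ref{criteven} and~\ref{finde1}.
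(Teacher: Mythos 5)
Your proposal is correct and takes essentially the same route as the paper, which deduces the corollary in one line by citing Proposition~\ref{finde1} with $g=s$ together with Corollary~\ref{criteven1}; the bookkeeping you supply (that $\sigma$ descends to an involution on $HH_0({\mathbf A},{\mathbf A}s)$, and that the chain of inequalities from Proposition~\ref{finde1} is compatible with passing to $\sigma$-invariants) is exactly what the paper leaves implicit. A small simplification: $s$ acting trivially on $HH_0({\mathbf A},{\mathbf A}s)$ is immediate from $\mathbf{a}-s(\mathbf{a})=1\cdot\mathbf{a}-\mathbf{a}\cdot s(1)\in[{\mathbf A},{\mathbf A}s]$, with no need for the filtration argument.
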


\begin{proof} This immediately follows from Proposition~\ref{finde1}, specializing to the case $g=s$ and restricting to $\sigma$-invariant traces using Corollary~\ref{criteven1}.
\end{proof}

Corollary~\ref{finde} shows that if $\operatorname{Spec}A$ has finitely many symplectic leaves
then nondegenerate short even star-products depend on finitely many parameters, as conjectured in~\cite{BPR}.

\subsubsection{The general case}
Now let us generalize Proposition \ref{finde1} to the nonsemisimple case. For this purpose assume that every Poisson derivation of $A$ of nonpositive degree is inner. Then the Lie algebra $\g$ of the group
$G:=\operatorname{Aut}_0(A)$ of filtration-preserving Poisson automorphisms of $A$ commuting with $s$ has the form $\g=A_2$, where the operation is the Poisson bracket.

Let us also assume that the group~$G$ is reductive. Then it is easy to see that the action of $\g$ on $A$ uniquely lifts to any quantization ${\mathbf A}$, so we have ${\rm Der}({\mathbf A})^s=\g$. Thus we have a Lie algebra inclusion $\g\subset F_2{\mathbf A}\subset {\mathbf A}$.

Let $g\in G$. Write $g=g_0\exp(e)=\exp(e)g_0$ where $g_0\in G$ is semisimple and $e\in \g$ is nilpotent.

\begin{Proposition}\label{finde2} Under the above assumptions, the conclusion of Proposition~{\rm \ref{finde1}} holds for~$g$. Moreover,
\begin{gather*}
\dim HH_0({\mathbf A},{\mathbf A}g)^s\le \dim HH_0({\mathbf A},{\mathbf A}g_0)^s\le \dim HP_0(X^{g_0})^s<\infty.
\end{gather*}
\end{Proposition}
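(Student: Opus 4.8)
The plan is to reduce the nonsemisimple case to the semisimple case already treated in Proposition~\ref{finde1}, via a deformation/filtration argument using the nilpotent part $e$. First I would set up the key comparison map. Since $g=g_0\exp(e)$ with $e\in\g\subset F_2{\mathbf A}$ nilpotent and commuting with $g_0$, I want to compare the twisted Hochschild homology $HH_0({\mathbf A},{\mathbf A}g)$ with $HH_0({\mathbf A},{\mathbf A}g_0)$. The natural tool is a filtration on the space $HH_0({\mathbf A},{\mathbf A}g)={\mathbf A}/E_g$, where $E_g$ is the span of $\mathbf{ab}-\mathbf{b}g(\mathbf a)$: write $g(\mathbf a)=g_0(\mathbf a)+g_0\bigl((e+\tfrac{e^2}{2}+\cdots)(\mathbf a)\bigr)$, where the correction involves $\operatorname{ad}$ of elements of $F_2{\mathbf A}$, hence lowers filtration degree by at least $2$. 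So modulo lower-order terms, $E_g$ agrees with $E_{g_0}$, and I would make this precise by introducing a parameter $t$, replacing $e$ by $te$, so that $g_t:=g_0\exp(te)$ interpolates between $g_0$ (at $t=0$) and $g$ (at $t=1$), and the family $HH_0({\mathbf A},{\mathbf A}g_t)$ is a flat family (or at least upper semicontinuous in dimension) over the affine line in $t$.

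Concretely, the key step is the inequality $\dim HH_0({\mathbf A},{\mathbf A}g)^s\le \dim HH_0({\mathbf A},{\mathbf A}g_0)^s$. I would prove this by a spectral-sequence / associated-graded argument: equip ${\mathbf A}/E_g$ with the filtration induced from $F_\bullet{\mathbf A}$, and observe that on the associated graded level the relation $\mathbf{ab}-\mathbf{b}g(\mathbf a)$ has the same leading term $(a-g_0(a))b$ as in the $g_0$ case (since the $e$-correction is strictly lower order). Hence $\operatorname{gr}\bigl({\mathbf A}/E_g\bigr)$ is a quotient of $\operatorname{gr}\bigl({\mathbf A}/E_{g_0}\bigr)=A/\operatorname{gr}(E_{g_0})$, which by the proof of Proposition~\ref{finde1} surjects from $HP_0(X^{g_0})$ and is finite dimensional. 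This gives finite dimensionality of $HH_0({\mathbf A},{\mathbf A}g)$ together with the desired chain of inequalities, and the $s$-equivariance is preserved throughout since $e$ commutes with $s$ (as $e\in\g=A_2$ and $A_2$ lies in the $s$-invariant part, so $\operatorname{ad}(e)$ preserves the $s$-grading). Once finite dimensionality and the inequalities are in hand, the conclusion of Proposition~\ref{finde1} — that nondegenerate short star-products for $g$ are parametrized by the complement of countably many hypersurfaces in $(HH_0({\mathbf A},{\mathbf A}g)^s)^*$ modulo scaling — follows verbatim from Corollary~\ref{bun} and the nondegeneracy (vanishing of countably many determinants) exactly as before.

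The main obstacle I anticipate is verifying carefully that the $e$-correction to the twisting really is strictly filtration-decreasing in the sense needed, and that one does not lose control when passing to associated graded: $a$ need not lie in the generalized $1$-eigenspace of $g_0$, so the argument that $(a-g_0(a))b$ generates the ideal $I$ of $X^{g_0}$ must be invoked exactly as in Proposition~\ref{finde1}, and one must check the leading term of $\mathbf{ab}-\mathbf{b}g(\mathbf a)$ is genuinely $(a-g_0(a))b$ whenever $a\notin\ker(1-g_0)$, while for $a$ in the generalized $1$-eigenspace of $g_0$ one falls back on the bracket argument $\{a,b\}\in\operatorname{gr}(E)$ — here I must ensure a $g_0$-invariant lift of $a$ can still be chosen so that $\exp(te)$ acts controllably. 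A secondary subtlety is the reductivity hypothesis on $G$, which is what guarantees the lift of $\g$ to ${\mathbf A}$ and hence that $e$ genuinely sits inside $F_2{\mathbf A}$ acting by an inner derivation; without it the decomposition $g=g_0\exp(e)$ would not have an algebraic incarnation inside ${\mathbf A}$. Modulo these points, the argument is a routine semicontinuity/associated-graded reduction to the already-established semisimple case.
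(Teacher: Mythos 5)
The central step of your argument fails: the claim that the correction term $g_0\bigl((e+\tfrac{e^2}{2}+\cdots)(\mathbf a)\bigr)$ ``lowers filtration degree by at least~$2$'' is wrong. The element~$e$ acts on ${\mathbf A}$ by an inner derivation $\operatorname{ad}({\mathbf z})$ with ${\mathbf z}\in F_2{\mathbf A}$, and since $[F_2{\mathbf A},F_i{\mathbf A}]\subset F_i{\mathbf A}$, this derivation \emph{preserves} the filtration rather than lowering it; on the associated graded it acts as the degree-zero Poisson derivation $\{z,\cdot\}$. Consequently the leading symbol of ${\mathbf a}{\mathbf b}-{\mathbf b}g({\mathbf a})$ is $(a-g(a))b$ with the \emph{full} $g=g_0\exp(e)$ acting on $A$, not $(a-g_0(a))b$; the nilpotent part does not disappear in $\operatorname{gr}$. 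There is therefore no containment $\operatorname{gr}(E_g)\supseteq\operatorname{gr}(E_{g_0})$, and you cannot conclude that $\operatorname{gr}({\mathbf A}/E_g)$ is a quotient of $A/\operatorname{gr}(E_{g_0})$. Moreover, even running the Proposition~\ref{finde1} argument directly with $g$ in place of $g_0$ breaks down: for non-semisimple $g$ the fixed scheme $X^g$ need not have finitely many symplectic leaves (e.g.\ $g(x)=x$, $g(y)=y+x$ on $\mathbb{C}[x,y]$ gives the Lagrangian line $x=0$), and the identification $A/I=A^g/(I\cap A^g)$ used there fails, so the bound by $\dim HP_0(X^g)$ is unavailable.

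Your secondary suggestion --- semicontinuity of $\dim HH_0({\mathbf A},{\mathbf A}g_t)$ over the family $g_t=g_0\exp(te)$ --- is much closer to what the paper actually does, but as stated it is missing the mechanism that makes it work: to compare $t=1$ with $t=0$ one must know that $g_t$ is conjugate to $g$ for all $t\neq 0$, and one must also handle the a~priori infinite-dimensional spaces. The paper achieves both at once by completing $e$ to an $\mathfrak{sl}_2$-triple $(e,h,f)$ in $\operatorname{Lie}Z(g_0)$ via Jacobson--Morozov (this is precisely where reductivity enters), decomposing ${\mathbf A}$ by $h$-weight, and observing that any $g$-twisted trace $T$ vanishes on strictly positive $h$-weight (since $T([e,\cdot])=0$ and by $\mathfrak{sl}_2$ representation theory). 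Then $T_t({\mathbf a}):=T(t^{-h}{\mathbf a}t^h)$ is a $g_t$-twisted trace up to the change of relations, the limit $T_0$ as $t\to 0$ exists and is shown directly to be a $g_0$-twisted trace, and $T\mapsto T_0$ is injective by a leading-order-in-$t$ argument. This yields $\dim HH_0({\mathbf A},{\mathbf A}g)^s\le\dim HH_0({\mathbf A},{\mathbf A}g_0)^s$ with no appeal to associated-graded leading terms and no need for $X^g$ itself to be a well-behaved Poisson scheme.
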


\begin{proof} Let $Z$ be the centralizer of $g_0$ in $G$, a reductive group containing $\exp(e)$. By the Jacobson--Morozov lemma, the Lie algebra $\mathfrak{z}$ of $Z$ contains an $\mathfrak{sl}_2$-triple
$e$, $h$, $f$. We have a~decomposition ${\mathbf A}=\oplus_{j\in {\mathbb Z}}{\mathbf A}_j$, where ${\mathbf A}_j$ is the $j$-eigenspace of~$h$. Since $g(e)=e$, we have $e{\mathbf b}-{\mathbf b}g(e)=[e,{\mathbf b}]$, so for any $T\in HH_0({\mathbf A},{\mathbf A}g)^*$ and ${\mathbf b}\in {\mathbf A}$ we have $T([e,{\mathbf b}])=0$. By representation theory of $\mathfrak{sl}_2$, this implies that $T({\mathbf a})=0$ for any ${\mathbf a}\in {\mathbf A}_j$ with $j>0$.

Now consider the 1-parameter subgroup $t^h$ of $G$ ($t\in {\mathbb C}^\times$), and define
$T_t({\mathbf a}):=T\big(t^{-h}{\mathbf a}t^h\big)$. If ${\mathbf a}\in {\mathbf A}_j$ then $T_t({\mathbf a})=t^{-j}T({\mathbf a})$. Thus, since $T$ vanishes on ${\mathbf A}_j$ for $j>0$, there exists a limit $T_0=\lim\limits_{t\to 0}T_t$, which coincides with $T$ on ${\mathbf A}_0$ and vanishes on ${\mathbf A}_j$ for $j\ne 0$.

We have $T({\mathbf a})=T_t({\mathbf a}_t)$, where ${\mathbf a}_t:=t^h{\mathbf a} t^{-h}$.
Recall that
\begin{gather*}
{\mathbf a}{\mathbf b}-{\mathbf b} g({\mathbf a})={\mathbf a}{\mathbf b}-{\mathbf b}g_0({\mathbf a})-{\mathbf b}\sum_{i\ge 1}\frac{g_0e^i}{i!}({\mathbf a})\in \operatorname{Ker}(T).
\end{gather*}
Hence
\begin{gather*}
t^h({\mathbf a}{\mathbf b}-{\mathbf b} g({\mathbf a}))t^{-h}={\mathbf a}_t{\mathbf b}_t-{\mathbf b}_t g_0({\mathbf a}_t)-{\mathbf b}_t\sum_{i\ge 1}\frac{t^{2i}e^i}{i!}({\mathbf a}_t)\in \operatorname{Ker}(T_t).
\end{gather*}
Thus, for any ${\mathbf a},{\mathbf b}\in {\mathbf A}$ the element ${\mathbf a}{\mathbf b}- {\mathbf b} g_0({\mathbf a})-{\mathbf b}\sum\limits_{i\ge 1}\frac{t^{2i}g_0e^i}{i!}({\mathbf a})$ belongs to $\operatorname{Ker}(T_t)$. Sending~$t$ to zero, we see that ${\mathbf a}{\mathbf b}-{\mathbf b} g_0({\mathbf a})\in \operatorname{Ker}(T_0)$. Thus, $T_0\in (HH_0({\mathbf A},{\mathbf A}g_0)^s)^*$, which is finite dimensional (with dimension dominated by $\dim HP_0(X^{g_0})^s$) by Proposition~\ref{finde1}.

It remains to show that the assignment $T\mapsto T_0$ is injective; this implies both statements of the theorem. To this end, assume that $T_0=0$ but $T\ne 0$, and let $m>0$ be the order of vanishing of $T_t$ at $t=0$. Let $T_0':=\lim\limits_{t\to 0}T_t/t^m$. Then $T_0'$ is a nonzero element of $HH_0({\mathbf A},{\mathbf A}g_0)^*$ which vanishes on ${\mathbf A}_0$. But if ${\mathbf a}\in {\mathbf A}_j$, $j\ne 0$, then ${\mathbf a}=\frac{1}{j}[h,{\mathbf a}]$, so $T_0'({\mathbf a})=0$. This is a~contradiction, which completes the proof of the proposition.
\end{proof}

\begin{Theorem}\label{finde3} Let $A=\mathcal{O}(X)$, where $X$ is a conical symplectic singularity
with Poisson bracket of degree $-2$ such that the group $G$ of dilation-invariant Poisson automorphisms of $X$ is reductive. Then for any quantization ${\mathbf A}$ of $A$ and any $g\in G$ the space $HH_0({\mathbf A},{\mathbf A}g)^s$ is finite dimensional, with dimension dominated by $\dim HP_0(X^{g_0})^s$, which is finite. Therefore, nondegenerate short star-products on~$A$ depend on finitely many parameters.
\end{Theorem}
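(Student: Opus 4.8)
The plan is to deduce Theorem~\ref{finde3} by assembling Proposition~\ref{finde2}, Lemma~\ref{finfix}, and Corollary~\ref{bun}, using the standard structure theory of conical symplectic singularities to verify the needed hypotheses. \emph{First} I would check that we are in the situation of Proposition~\ref{finde2}: since $X$ is a conical symplectic singularity, Lemma~\ref{inn} says every Poisson derivation of $A=\mathcal{O}(X)$ is inner (in particular every one of nonpositive degree), and $X$ has finitely many symplectic leaves by \cite[Theorem~2.3]{Ka}; combined with the hypothesis that $G=\operatorname{Aut}_0(A)$ is reductive, this gives $\g=A_2$, a unique lift of the $\g$-action to ${\mathbf A}$, and ${\rm Der}({\mathbf A})^s=\g\subset F_2{\mathbf A}$, exactly as Proposition~\ref{finde2} requires. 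I would also record that by the Remark following Lemma~\ref{inn} a filtration-preserving $s$-invariant automorphism of ${\mathbf A}$ is determined by its associated graded, so $\operatorname{Aut}({\mathbf A})$ embeds into $G$ and is in particular finite-dimensional, and that $s=(-1)^d$ is a Poisson automorphism (the bracket has even degree), hence lies in $G$ and is central there.

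\emph{Next}, for a fixed $g\in\operatorname{Aut}({\mathbf A})\subset G$, I would write the Jordan decomposition $g=g_0\exp(e)$ with $g_0\in G$ semisimple and $e\in\g$ nilpotent, and apply Proposition~\ref{finde2} verbatim to obtain
\begin{gather*}
\dim HH_0({\mathbf A},{\mathbf A}g)^s\le\dim HH_0({\mathbf A},{\mathbf A}g_0)^s\le\dim HP_0(X^{g_0})^s<\infty,
\end{gather*}
where $X^{g_0}$ is the fixed-point scheme of the associated-graded automorphism on $X$, as in Proposition~\ref{finde2}. This already gives the first assertion of the theorem.

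\emph{Then} I would upgrade this pointwise finiteness to a bound uniform in $g$, which is what makes ``finitely many parameters'' meaningful. Here Lemma~\ref{finfix} does the work: $G$ is reductive and acts on the affine scheme $X$ of finite type, so among all $X^{g_0}$ with $g_0\in G$ semisimple there are only finitely many up to the $G$-action; since a $G$-equivariant isomorphism $X^{g_0}\cong X^{g_0'}$ induces an isomorphism on $HP_0$ commuting with the (central) $s$-action, the number $\dim HP_0(X^{g_0})^s$ takes only finitely many values, say all $\le N$. Hence $\dim HH_0({\mathbf A},{\mathbf A}g)^s\le N$ for every $g$. \emph{Finally}, by Corollary~\ref{bun} the set $S$ of nondegenerate short star-products defining ${\mathbf A}$ maps to $\operatorname{Aut}({\mathbf A})$ with each fiber contained in a vector space $(HH_0({\mathbf A},{\mathbf A}g)^s)^*$ of dimension $\le N$; since $\operatorname{Aut}({\mathbf A})\subseteq G$ is finite-dimensional, $S$ is covered by a finite-dimensional family, i.e., nondegenerate short star-products depend on finitely many parameters. (Corollary~\ref{bun1} lets one cut this down further to $G$-conjugacy classes of $g$ when counting essential parameters, but this refinement is not needed for finiteness.)

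The genuinely substantive inputs --- Proposition~\ref{finde2}, which controls $HH_0({\mathbf A},{\mathbf A}g)$ by the Poisson homology of a fixed-point scheme, and Lemma~\ref{finfix}, which controls those fixed-point schemes --- are already established, so the remaining work is organizational and the main obstacle is the uniform-bound step: one must check that ``finitely many fixed-point schemes up to $G$-action'' really yields a single $N$ bounding all the $\dim HH_0({\mathbf A},{\mathbf A}g)^s$, which relies on $HP_0(-)^s$ being an invariant of $G$-equivariant isomorphism and on the relevant automorphism group being precisely the reductive $G$ of the hypothesis. Without this, pointwise finite-dimensionality of the fibers would not force $S$ itself to be finite-dimensional.
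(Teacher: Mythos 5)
Your proof is correct and follows the paper's own route: the paper's proof is the one-line statement that the theorem ``follows immediately'' from Proposition~\ref{finde2} and Lemmas~\ref{inn},~\ref{finfix}, and your write-up is precisely the intended unwinding of that citation, including the key uniformity step where Lemma~\ref{finfix} bounds $\dim HP_0(X^{g_0})^s$ over all semisimple $g_0$, using that $s$ is central in $G$ so that the bound descends to $s$-invariants.
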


\begin{proof}The theorem follows immediately from the above results and Lemmas \ref{inn}, \ref{finfix}.
\end{proof}

The group $G$ is not always reductive, see the example in the beginning of~\cite{Na2}. However, this is true in most applications, in particular if $X$ is a hyperK\"ahler cone (the case considered in~\cite{BPR} and~\cite{DPY}), i.e., the holomorphic symplectic structure on the big symplectic leaf on $X$ comes from a~hyperK\"ahler structure (as complex structure~$I$). In this case, by \cite[Theorem~3.3]{HKLR} (see also the discussion in \cite[p.~22]{GNT}), the hyperK\"ahler metric is determined by the holomorphic symplectic structure and the complex conjugation map through the twistor construction (possibly up to finitely many choices). This means that the real part $G^0_{{\mathbb R}}$ of the identity component $G^0$ of the group $G$ preserves the hyperK\"ahler metric. This implies that $G^0_{{\mathbb R}}$ preserves the unitary structure on the space~$A_d$ for each~$d$ given by
\begin{gather*}
||f||^2:=\int_X |f(z)|^2{\rm e}^{-D(z)}{\rm d}V,
\end{gather*}
where ${\rm d}V$ is the volume element and $D(z)$ the distance to the origin determined by the hyperK\"ahler metric. This implies that $G^0_{{\mathbb R}}$ is compact, hence~$G^0$ and~$G$ are reductive, as desired.

Thus we obtain the following generalization of the conjecture from~\cite{BPR} to the not necessarily even case, which is our first main result.

\begin{Theorem}\label{main1}Theorem~{\rm \ref{finde3}} holds for hyperK\"ahler cones.
\end{Theorem}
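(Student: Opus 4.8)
The plan is to reduce Theorem~\ref{main1} to Theorem~\ref{finde3} by verifying its one standing hypothesis, namely that the group $G$ of dilation-invariant Poisson automorphisms of $X$ is reductive; once this is known, finiteness of $HH_0({\mathbf A},{\mathbf A}g)^s$, the bound by $\dim HP_0(X^{g_0})^s$, and the consequence for nondegenerate short star-products all follow from the already established results together with Lemmas~\ref{inn} and~\ref{finfix}.

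To prove reductivity of $G$ it suffices to treat the identity component $G^0$, and for this I would introduce its real form $G^0_{\mathbb R}$, the group of real-analytic automorphisms of $X$ commuting with the dilation action and compatible with the real structure. The first step is to invoke the twistor description of the hyperK\"ahler structure: by the Hitchin--Karlhede--Lindstr\"om--Ro\v{c}ek construction \cite{HKLR} (applied on the smooth locus $X^\circ$, with the expectation, noted in the text, that the geometry of $X$ is controlled by $X^\circ$), the hyperK\"ahler metric is recovered, up to at most finitely many choices, from the holomorphic symplectic form $\omega=\omega_J+{\rm i}\omega_K$ together with the complex conjugation map. Any element of $G^0_{\mathbb R}$ preserves both of these data, hence permutes this finite set of metrics; being connected, it must therefore preserve the hyperK\"ahler metric itself.

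The second step is to convert metric-invariance into compactness. Using the hyperK\"ahler metric one equips each finite-dimensional space $A_d=\mathcal{O}(X)_d$ with the Hermitian form
\[
\|f\|^2:=\int_X |f(z)|^2{\rm e}^{-D(z)}\,{\rm d}V,
\]
where ${\rm d}V$ is the Riemannian volume element and $D(z)$ is the distance from $z$ to the cone point; the integral converges because $D$ grows along the cone, and may be taken over the smooth locus, which has full measure. Since $G^0_{\mathbb R}$ preserves the metric it preserves each $\|\cdot\|^2$, so it acts on every $A_d$ through a group of unitary operators; as $\mathcal{O}(X)$ is generated in bounded degree, this produces a faithful homomorphism from $G^0_{\mathbb R}$ into a product of unitary groups with closed image, whence $G^0_{\mathbb R}$ is compact. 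A complex algebraic group with a compact real form is reductive, so $G^0$, and therefore $G$, is reductive, and Theorem~\ref{finde3} applies verbatim.

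The step I expect to be the main obstacle is making the twistor argument rigorous on the singular space $X$ rather than on a smooth hyperK\"ahler manifold: one must either extend the metric-uniqueness statement across the singular locus (which the paper flags as an expectation rather than a theorem) or argue entirely on $X^\circ$ and check that $G^0_{\mathbb R}$-invariance there is enough --- e.g., that the weighted $L^2$-type norms above remain well defined and $G^0_{\mathbb R}$-invariant when the integral is restricted to the smooth locus. Controlling the ``finitely many choices'' ambiguity in the twistor reconstruction and the convergence of the weighted integral near the singular set are the technical points requiring care; everything downstream of reductivity is immediate.
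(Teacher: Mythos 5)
Your argument reproduces the paper's own proof almost verbatim: invoke the twistor reconstruction of the hyperK\"ahler metric from \cite[Theorem~3.3]{HKLR} to show $G^0_{\mathbb R}$ preserves the metric, deduce invariance of the weighted $L^2$ inner products $\|f\|^2=\int_X|f|^2{\rm e}^{-D}\,{\rm d}V$ on each $A_d$, conclude compactness of $G^0_{\mathbb R}$ and hence reductivity of $G$, and then apply Theorem~\ref{finde3}. You even spell out the two points the paper leaves implicit --- that connectedness of $G^0_{\mathbb R}$ kills the finite ambiguity in the twistor reconstruction, and that the singular-locus caveat (flagged in the paper's own footnote on \cite{Br, Sw}) is the genuine gap --- so your proposal matches the intended argument and correctly identifies where it is heuristic.
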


\subsection{Conjugations}

As before, let $A$ be a graded Poisson algebra with Poisson bracket of degree $-2$.

\begin{Definition} \quad
\begin{enumerate}\itemsep=0pt
\item[(i)] A {\it conjugation} on $A$ is a ${\mathbb C}$-antilinear degree-preserving Poisson automorphism $\rho\colon A\to A$.
\item[(ii)] If ${\mathbf A}$ is a quantization of $A$ then a conjugation on~${\mathbf A}$ is an antilinear filtration-preserving automorphism~$\rho$ of~${\mathbf A}$ which commutes with~$s$.
\end{enumerate}
\end{Definition}

It is clear that any conjugation on ${\mathbf A}$ gives rise to a conjugation on~$A$ (by taking the associated graded). Also note that if~$\rho$ is a conjugation then~$\rho^2$ is a~${\mathbb C}$-linear automorphism.

Let us say that the conjugations $\rho$ and $\rho'$ are {\it equivalent} if $\rho'=h\rho h^{-1}$
for some automorphism~$h$. Clearly, it makes sense to distinguish conjugations only up to equivalence, since
equivalent conjugations are ``the same for all practical purposes''.

\begin{Definition} If $A$ is equipped with a conjugation $\rho$ then we call a star-product $*$ on $A$ {\it conjugation-invariant} if $\rho(a*b)=\rho(a)*\rho(b)$.
\end{Definition}

It is easy to see that conjugation-invariant star-products on $A$ correspond
to quantizations~${\mathbf A}$ of~$A$ with a conjugation (whose associated graded is the conjugation of~$A$) equipped with a~conjugation-invariant quantization map~$\phi$.

\begin{Definition} We say a conjugation-invariant {\sl short} star-product $*$ on~$A$ is {\it Hermitian} if $CT(a*b)=CT\big(b*\rho^2(a)\big)$ for $a,b\in A$.
\end{Definition}

This terminology is motivated by the following lemma.

\begin{Lemma}\label{herm} Let $*$ be a conjugation-invariant short star-product on~$A$. Then~$*$ is Hermitian if and only if the sesquilinear form $a,b\mapsto \langle a,\rho(b)\rangle=CT(a*\rho(b))$ is Hermitian.
\end{Lemma}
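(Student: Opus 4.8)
The plan is to prove Lemma~\ref{herm} by a direct computation unwinding the two conditions. First I would set up notation: write $\beta(a,b):=CT(a*b)=\langle a,b\rangle$ for the (nonsymmetric) inner product attached to the short star-product, and let $h(a,b):=\langle a,\rho(b)\rangle = CT(a*\rho(b))$ be the sesquilinear form in question. Since $\rho$ is antilinear, $h$ is indeed sesquilinear (linear in $a$, antilinear in $b$), so the statement ``$h$ is Hermitian'' means precisely $h(a,b)=\overline{h(b,a)}$ for all $a,b\in A$.

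Next I would reformulate the right-hand side. Using conjugation-invariance of $*$ together with the fact that $\rho$ is a degree-preserving automorphism of $A$, we have $\rho(a*b)=\rho(a)*\rho(b)$, and applying $CT$ (which commutes with the antilinear $\rho$ in the sense that $CT(\rho(x))=\overline{CT(x)}$, since $\rho$ preserves $A_0={\mathbb C}$ and is antilinear there) gives $CT(\rho(a)*\rho(b))=\overline{CT(a*b)}$, i.e.\ $\beta(\rho(a),\rho(b))=\overline{\beta(a,b)}$. Therefore
\begin{gather*}
\overline{h(b,a)}=\overline{CT(b*\rho(a))}=\overline{\beta(b,\rho(a))}=\beta(\rho(b),\rho^2(a))=CT\big(\rho(b)*\rho^2(a)\big).
\end{gather*}
On the other hand $h(a,b)=CT(a*\rho(b))$. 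Now I would substitute $a\mapsto \rho(b)$ and $b\mapsto \rho(a)$... more carefully: I want to compare $h(a,b)=CT(a*\rho(b))$ with $\overline{h(b,a)}=CT(\rho(b)*\rho^2(a))$. Replacing in the Hermitian condition $CT(x*y)=CT(y*\rho^2(x))$ the elements $x=a$, $y=\rho(b)$ yields $CT(a*\rho(b))=CT(\rho(b)*\rho^2(a))$, which is exactly $h(a,b)=\overline{h(b,a)}$. Conversely, if $h$ is Hermitian, then for arbitrary $a,b\in A$ set $a'=a$, $b'=\rho^{-1}(b)$ (using that $\rho$ is invertible); then $CT(a*b)=h(a',\rho^{-1}(b')\text{-adjusted})$... so $CT(a*b)=CT(a*\rho(\rho^{-1}(b)))=h(a,\rho^{-1}(b))=\overline{h(\rho^{-1}(b),a)}=CT(\rho^{-1}(b)*\rho^2(a))$, and applying the identity $\beta(\rho(x),\rho(y))=\overline{\beta(x,y)}$ once more (or just $\rho$-invariance of $*$ plus $CT\circ\rho=\overline{CT}$) converts $CT(\rho^{-1}(b)*\rho^2(a))$ into $CT(b*\rho^2(a))$ up to a conjugation that cancels, giving the Hermitian-star-product condition $CT(a*b)=CT(b*\rho^2(a))$.

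The only subtlety — and the step I'd be most careful about — is the bookkeeping of complex conjugations introduced by the antilinearity of $\rho$: one must check that $CT(\rho(x))=\overline{CT(x)}$ and that the two applications of $\beta(\rho(\cdot),\rho(\cdot))=\overline{\beta(\cdot,\cdot)}$ in the forward and backward directions cancel the conjugations correctly so that the final equivalence has no stray bar. Since $\rho$ is a degree-preserving \emph{Poisson} automorphism, it genuinely permutes the $A_d$, so $CT$ and $\rho$ interact exactly as claimed, and no extra hypotheses beyond shortness (used only to know $CT(a*b)$ vanishes unless $\deg a=\deg b$, so the forms are well-behaved degree by degree) and conjugation-invariance are needed. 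Everything else is the routine substitution above, so I would present it compactly as a two-line chain of equalities in each direction.
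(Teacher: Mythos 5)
Your argument is the same one the paper uses: unwind the Hermitian-star-product condition $CT(a*b)=CT\big(b*\rho^2(a)\big)$ via the two identities $\rho(x*y)=\rho(x)*\rho(y)$ and $CT(\rho(x))=\overline{CT(x)}$, which combine to give $CT\big(b*\rho^2(a)\big)=\overline{CT\big(\rho^{-1}(b)*\rho(a)\big)}$, and then substitute $b\mapsto\rho(b)$ to match the Hermiticity of $h(a,b)=CT(a*\rho(b))$. The forward direction is stated cleanly; in the converse you wrote $\overline{h(\rho^{-1}(b),a)}=CT\big(\rho^{-1}(b)*\rho^2(a)\big)$, but the correct simplification is $\overline{h(\rho^{-1}(b),a)}=\overline{CT\big(\rho^{-1}(b)*\rho(a)\big)}=CT\big(b*\rho^2(a)\big)$, which already finishes the converse without the extra ``once more'' step you then try to add — a small bookkeeping slip, not a gap in the method.
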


\begin{proof} We have
\begin{gather*}
\overline{CT\big(\rho^{-1}(b)*\rho(a)\big)}=CT\big(b*\rho^2(a)\big).
\end{gather*}
The Hermitian condition is that this equals $CT(a*b)$, as desired.
\end{proof}

\begin{Definition}We will say that a Hermitian short star-product~$*$ on~$A$ is {\it unitary} if the Hermitian form $\langle a,\rho(b)\rangle$ is positive definite (i.e., it is positive definite on~$A_d$ for each~$d$).
\end{Definition}

\begin{Lemma}\label{lh1} A conjugation-invariant short star-product $*$ is nondegenerate if and only if
the form $\langle a,\rho(b)\rangle$ is nondegenerate; in particular, a~unitary star-product is automatically nondegenerate.
\end{Lemma}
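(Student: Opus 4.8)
The plan is to unwind the definitions. Recall that a conjugation-invariant short star-product is nondegenerate precisely when the bilinear form $\langle a,b\rangle = CT(a*b)$ is nondegenerate on each homogeneous piece $A_d$ (by our standing definition of nondegeneracy). Since $\rho$ is a degree-preserving antilinear automorphism, it restricts to an antilinear bijection $A_d\to A_d$. So the key observation is simply that composing a nondegenerate bilinear form with an invertible (antilinear) change of one variable preserves nondegeneracy, and conversely.

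First I would fix $d$ and let $b_1,\dots,b_N$ be a basis of $A_d$. Then $\rho(b_1),\dots,\rho(b_N)$ is again a basis of $A_d$, because $\rho|_{A_d}$ is bijective (its inverse is $(\rho|_{A_d})^{-1}$, which exists since $\rho$ is an automorphism). The sesquilinear form $\langle a,\rho(b)\rangle$ restricted to $A_d$ is degenerate iff there is a nonzero $a\in A_d$ with $\langle a,\rho(b)\rangle = 0$ for all $b\in A_d$, equivalently $\langle a, b'\rangle = 0$ for all $b'\in A_d$ (substituting $b' = \rho(b)$ and using that $\rho|_{A_d}$ is onto), equivalently $a$ is in the left kernel of $\langle\,,\,\rangle$ on $A_d$. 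Similarly, since $*$ is short, the only nonzero pairings $\langle a,b\rangle$ occur with $a,b$ in the same degree, so the global form is nondegenerate iff it is nondegenerate on each $A_d$; the same reduction applies to $\langle a,\rho(b)\rangle$ since $\rho$ preserves degree. Hence nondegeneracy of $\langle\,,\,\rangle$ on $A_d$ is equivalent to nondegeneracy of $\langle a,\rho(b)\rangle$ on $A_d$, and summing over $d$ gives the first claim.

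For the second claim, that a unitary star-product is automatically nondegenerate: if the Hermitian form $\langle a,\rho(b)\rangle$ is positive definite on each $A_d$, it is in particular nondegenerate there, so by the equivalence just established $\langle\,,\,\rangle$ is nondegenerate, i.e., $*$ is nondegenerate.

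There is no real obstacle here; the statement is essentially a bookkeeping exercise in the definitions, and the only point requiring a moment's care is the reduction to homogeneous components, which uses the shortness of $*$ together with the fact that $\rho$ preserves the grading (so that the orthogonality of the decomposition $A = \bigoplus_d A_d$ is respected by both forms). If anything is "hard" it is only to state it cleanly; the proof the authors give is presumably a one-liner.

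Since this is just a proof plan I will not write out the full $\varepsilon$-free argument above in final form, but the above is exactly the route: reduce to a fixed $A_d$, note $\rho|_{A_d}$ is an antilinear bijection, and observe that precomposing a form with an invertible change of variable cannot alter (non)degeneracy.
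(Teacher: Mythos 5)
Your proposal is correct and is exactly the intended unwinding of definitions: the paper's own proof is simply ``Straightforward,'' and your argument (reduce to each finite-dimensional $A_d$ using shortness of $*$ and the degree-preservation of $\rho$, then note that precomposing with the bijection $\rho|_{A_d}$ cannot change nondegeneracy, and that positive definite implies nondegenerate for the unitary case) fills in precisely the bookkeeping the authors left implicit.
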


\begin{proof} Straightforward.
\end{proof}

Also, recall that if $*$ is short and nondegenerate then it corresponds to a nondegenerate $g$-twisted trace $T$ on some quantization ${\mathbf A}$ of~$A$, and vice versa. Moreover, if $A$ is equipped with a conjugation map $\rho$ then we have an antilinear map $\rho\colon {\mathbf A}\to {\mathbf A}$ (as ${\mathbf A}$ is identified with $A$ as a~filtered vector space).

\begin{Lemma}\label{lh2}\quad
\begin{enumerate}\itemsep=0pt
\item[$(i)$] The star-product $*$ is conjugation-invariant if and only if~$\rho$ is a conjugation on ${\mathbf A}$ $($i.e., an antilinear algebra automorphism$)$ which conjugates~$T$ $($hence commutes with~$g)$;
\item[$(ii)$] In this situation $*$ is Hermitian if and only if in addition $\rho^2=g$.
\end{enumerate}
\end{Lemma}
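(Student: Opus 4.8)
The plan is to unwind the definitions in both directions, translating the conditions on the star-product $*$ into conditions on the twisted trace $T$ and the antilinear map $\rho\colon {\mathbf A}\to {\mathbf A}$, using the dictionary of Proposition~\ref{kon} together with the explicit description $T({\mathbf a})={\rm CT}(\phi^{-1}({\mathbf a}))$ of the trace attached to $*$. Throughout I identify ${\mathbf A}$ with $(A,*)$ via the quantization map $\phi=\phi^T$, so that $\rho$ acts on ${\mathbf A}$ as the antilinear filtered map agreeing with the conjugation of $A$ on associated graded.

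For part $(i)$: First I would show that $*$ conjugation-invariant, i.e., $\rho(a*b)=\rho(a)*\rho(b)$, is literally the statement that $\rho\colon {\mathbf A}\to {\mathbf A}$ is an (antilinear) algebra automorphism, since the product on ${\mathbf A}$ is exactly $*$ transported along $\phi$ and $\rho$ is already known to be $s$-commuting. Next, the fact that $\phi=\phi^T$ is the canonical quantization map attached to $T$ (built from orthogonal complements for $(\,,\,)_T$) means $\phi$ is $\rho$-equivariant precisely when the form $(\,,\,)_T$ is compatible with $\rho$; unwinding, $\rho(a*b)=\rho(a)*\rho(b)$ forces $\phi$ to intertwine the conjugations, and applying ${\rm CT}\circ\phi^{-1}$ shows $\overline{T({\mathbf a})}=T(\rho({\mathbf a}))$, i.e., $\rho$ conjugates $T$. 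Conversely, if $\rho$ is an antilinear automorphism of ${\mathbf A}$ conjugating $T$, then $\rho$ preserves the bilinear form $(\,,\,)_T$ up to complex conjugation, hence preserves the canonical orthogonal complements ${\mathbf A}_i^T$, hence commutes with $\phi^T$, which gives conjugation-invariance of $*$. Finally, $\rho$ conjugating $T$ together with $T({\mathbf a}{\mathbf b})=T({\mathbf b}g({\mathbf a}))$ forces, after applying $\rho$ and taking complex conjugates, the identity $T({\mathbf a}{\mathbf b})=T({\mathbf b}\,(\rho g\rho^{-1})({\mathbf a}))$; by uniqueness of the twisting automorphism for a nondegenerate trace (Lemma~\ref{ide} and the construction of $g$ in Proposition~\ref{kon}), this gives $\rho g\rho^{-1}=g$, i.e., $\rho$ commutes with $g$.

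For part $(ii)$: assuming the setup of $(i)$, I would translate the Hermitian condition ${\rm CT}(a*b)={\rm CT}(b*\rho^2(a))$ into $T({\mathbf a}{\mathbf b})=T({\mathbf b}\,\rho^2({\mathbf a}))$ for all ${\mathbf a},{\mathbf b}\in{\mathbf A}$, using $\phi$ and that $\rho^2$ is the linear map on ${\mathbf A}$ lifting $\rho^2$ on $A$. Comparing with the defining property $T({\mathbf a}{\mathbf b})=T({\mathbf b}g({\mathbf a}))$ of the twisted trace and again invoking uniqueness of the twisting automorphism, this is equivalent to $\rho^2=g$, which is exactly the asserted condition. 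In the other direction, $\rho^2=g$ substituted into the twisted trace identity immediately yields the Hermitian condition. I should be slightly careful that $\rho^2$ is genuinely filtration-preserving and $s$-commuting so that the uniqueness argument applies, but this is immediate from $\rho$ being a conjugation.

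The main obstacle is the repeated appeal to uniqueness of the twisting automorphism $g$ for a nondegenerate trace: I need to make sure that whenever I produce a second automorphism $g'$ with $T({\mathbf a}{\mathbf b})=T({\mathbf b}g'({\mathbf a}))$, the nondegeneracy of $(\,,\,)_T$ on each $F_i{\mathbf A}$ forces $g'=g$ (this is exactly the mechanism used to construct $g$ in the proof of Proposition~\ref{kon}, so it is available). The only genuinely fiddly point is tracking complex conjugations correctly through the antilinearity of $\rho$ — e.g., verifying that $\rho$ conjugating $T$ and twisting by $g$ combine to give twisting by $\rho g\rho^{-1}$ rather than by $\rho g^{-1}\rho^{-1}$ — which is a routine but careful computation of the form $\overline{T(\rho({\mathbf a})\rho({\mathbf b}))}$.
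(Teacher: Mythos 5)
Your proof is correct and takes the natural route that the paper itself labels ``straightforward'': identify ${\mathbf A}=(A,*)$ via $\phi=\phi^T$, read off conjugation-invariance as $\rho$ being an antilinear algebra automorphism, and use nondegeneracy of $(\,,\,)_T$ to pin down the twisting map in parts (i) (for $\rho g\rho^{-1}=g$) and (ii) (for $\rho^2=g$). One small remark on presentation: with the paper's convention that $\rho\colon{\mathbf A}\to{\mathbf A}$ is the transport of $\rho_A$ along $\phi^T$, the intertwining $\phi^T\circ\rho_A=\rho\circ\phi^T$ and the identity $T(\rho({\mathbf a}))=\overline{T({\mathbf a})}$ hold automatically (because $\rho_A$ is degree-preserving, hence commutes with ${\rm CT}$), so in the forward direction of (i) neither needs to be deduced from conjugation-invariance; the genuine content of (i) is the equivalence between conjugation-invariance of $*$ and $\rho$ being an algebra automorphism of ${\mathbf A}$, with the conjugating-$T$ and commuting-with-$g$ statements following as consequences.
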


\begin{proof} Straightforward.
\end{proof}

Let us now restrict to the case $A=\mathcal{O}(X)$ where $X$ is a conical symplectic singularity.

\begin{Lemma}\label{lh3} A lift of a conjugation $\rho$ from $A$ to ${\mathbf A}$ is unique if exists.
\end{Lemma}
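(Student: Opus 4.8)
The plan is to mimic the uniqueness argument already used in the Remark following Lemma~\ref{inn} for $\mathbb{C}$-linear automorphisms with $\operatorname{gr}=1$, reducing the difference of two lifts to a Poisson derivation of $A=\mathcal{O}(X)$ of strictly negative degree, which must vanish. Concretely, suppose $\rho_1,\rho_2\colon {\mathbf A}\to {\mathbf A}$ are two antilinear filtration-preserving automorphisms commuting with $s$, both having the same associated graded, namely the fixed conjugation $\rho$ on $A$. Then the composite $g:=\rho_1\circ\rho_2^{-1}$ is a $\mathbb{C}$-\emph{linear} (since it is a composition of two antilinear maps) filtration-preserving algebra automorphism of ${\mathbf A}$ which commutes with $s$, and $\operatorname{gr}(g)=\operatorname{gr}(\rho_1)\circ\operatorname{gr}(\rho_2)^{-1}=\rho\circ\rho^{-1}=\operatorname{id}_A$.

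Now I invoke the last sentence of the Remark after Lemma~\ref{inn}: a filtration-preserving automorphism of ${\mathbf A}$ commuting with $s$ is completely determined by its associated graded. Hence $g=\operatorname{id}_{\mathbf A}$, i.e., $\rho_1=\rho_2$, which is exactly the claim. (If one prefers to spell out the argument rather than cite it: since $\operatorname{gr}(g)=1$, the operator $\log g$ is a sum of derivations of ${\mathbf A}$ of degree $\le -2$; its leading term, if $g\ne 1$, is a nonzero homogeneous Poisson derivation $D\colon A\to A$ of degree $\le -2$; by Lemma~\ref{inn} $D$ is inner, hence given by a Hamiltonian of degree $\le 0$, which is necessarily constant, forcing $D=0$, a contradiction.)

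The only point that needs a moment's care—and the closest thing to an obstacle—is checking that $g$ is genuinely $\mathbb{C}$-linear and commutes with $s$, so that the cited rigidity statement applies verbatim. Linearity is immediate because the composite of two antilinear maps is linear; commutation with $s$ follows since both $\rho_i$ commute with $s$; and $g$ is filtration-preserving and an algebra automorphism because each $\rho_i$ is. Thus all hypotheses of the rigidity statement are met, and uniqueness of the lift follows. The proof in the paper will presumably just say "Straightforward" or "Analogous to the Remark after Lemma~\ref{inn}."
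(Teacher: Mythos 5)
Your proof is correct and follows essentially the same strategy as the paper: the quotient $\rho_1\circ\rho_2^{-1}$ of two lifts is a $\mathbb{C}$-linear, filtration-preserving automorphism of ${\mathbf A}$ commuting with $s$ whose associated graded is the identity, and it must therefore equal $\operatorname{id}_{\mathbf A}$ by the rigidity consequence of Lemma~\ref{inn}. The paper's own one-line proof invokes the same mechanism (innerness of filtration-preserving derivations via Lemma~\ref{inn}), so the two arguments coincide in substance.
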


\begin{proof} Any two lifts differ by an automorphism $\gamma$ of~${\mathbf A}$ that commutes with~$s$ and acts trivially on~$A$, and any such automorphism is the identity since any filtration-preserving derivation of ${\mathbf A}$ is inner (this follows from Lemma~\ref{inn} by taking the associated graded).
\end{proof}

Moreover, if $A$ is equipped with a conjugation then the space $\mathfrak{P}$ parametrizing quantizations of~$A$ carries a real structure $\lambda\mapsto \overline{\lambda}$ induced by~$\rho$.

\begin{Lemma}\label{lh4} A quantization ${\mathbf A}_\lambda$ admits a lift of $\rho$ if and only if $\overline{\lambda}\in W\lambda$, i.e., $\overline{\lambda}=\lambda$ in $\mathfrak{P}/W$, where $W$ is the Namikawa Weyl group.
\end{Lemma}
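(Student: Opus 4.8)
The plan is to reduce the statement to Namikawa's classification of filtered Poisson deformations combined with the uniqueness results already established. Recall that quantizations ${\mathbf A}_\lambda$ of $A=\mathcal{O}(X)$ are parametrized by $\lambda\in\mathfrak{P}$ modulo the Namikawa Weyl group $W$, where $\mathfrak{P}=H^2(\widetilde X^{\mathrm{reg}},\mathbb{C})$. A conjugation $\rho$ on $A$ is in particular an antilinear automorphism of the graded Poisson algebra $A$, so it acts on the space of filtered Poisson deformations of $X$, and this action is antilinear on $\mathfrak{P}$, inducing the real structure $\lambda\mapsto\overline{\lambda}$ mentioned in the text. The key observation is that applying $\rho$ to a quantization ${\mathbf A}_\lambda$ (i.e., twisting the multiplication by $\rho$ on both inputs and the output) produces another quantization of $A$, and I would identify its parameter: since $\rho$ is antilinear and Poisson, the twisted quantization is ${\mathbf A}_{\overline{\lambda}}$.

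First I would make precise how $\rho$ acts on quantizations: given ${\mathbf A}_\lambda$, define a new algebra structure on the same filtered vector space by $a\cdot b:=\rho({\rho^{-1}(a)}\,{\rho^{-1}(b)})$ where the product on the right is that of ${\mathbf A}_\lambda$; call this $\rho_*{\mathbf A}_\lambda$. One checks that $\rho_*{\mathbf A}_\lambda$ is again a filtered quantization of $A$ (with associated graded the original Poisson bracket, since $\rho$ preserves it), hence equals ${\mathbf A}_\mu$ for a unique $\mu\in\mathfrak{P}/W$. Tracking how $\rho$ acts on Namikawa's construction of the deformation — $\rho$ permutes the components of the twistor/deformation family antilinearly in accordance with its antilinear action on $H^2(\widetilde X^{\mathrm{reg}},\mathbb{C})$ — gives $\mu=\overline{\lambda}$. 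Now a lift of $\rho$ to ${\mathbf A}_\lambda$ is precisely an antilinear algebra isomorphism ${\mathbf A}_\lambda\to{\mathbf A}_\lambda$ inducing $\rho$ on the associated graded; equivalently, a filtered algebra isomorphism ${\mathbf A}_\lambda\to\rho_*{\mathbf A}_\lambda={\mathbf A}_{\overline\lambda}$ inducing the identity on $\operatorname{gr}$. Such an isomorphism exists if and only if ${\mathbf A}_\lambda\cong{\mathbf A}_{\overline\lambda}$ as filtered quantizations, which by the bijectivity of the parametrization happens if and only if $\overline{\lambda}=\lambda$ in $\mathfrak{P}/W$, i.e., $\overline{\lambda}\in W\lambda$.

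For the ``only if'' direction this is immediate from the above. For the ``if'' direction, suppose $\overline{\lambda}\in W\lambda$, so ${\mathbf A}_\lambda$ and $\rho_*{\mathbf A}_\lambda$ are isomorphic as filtered quantizations; pick any such filtered isomorphism $\psi$, inducing some automorphism $\operatorname{gr}\psi$ of $A$ commuting with $s$. I would need to adjust $\psi$ so that $\operatorname{gr}\psi=\mathrm{id}$: since $\operatorname{gr}\psi$ is a graded Poisson automorphism of $A$ commuting with $s$, one can hope to realize it (or rather correct by it) through an automorphism of the quantization — but here one must be careful, and this is where I expect the main subtlety to lie, namely in showing the isomorphism can be normalized to induce $\rho$ exactly (not just up to an automorphism) on $\operatorname{gr}$. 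In the conical symplectic singularity setting this is handled using Lemma~\ref{inn} and the Remark following it: any filtration-preserving automorphism of ${\mathbf A}$ commuting with $s$ is determined by its associated graded, and conversely graded Poisson automorphisms near the identity lift; composing $\psi$ with the lift of $(\operatorname{gr}\psi)^{-1}$ yields a filtered isomorphism inducing $\mathrm{id}$ on $\operatorname{gr}$, i.e., the desired lift of $\rho$. Uniqueness of the lift is Lemma~\ref{lh3}. The main obstacle is thus the bookkeeping of how $\rho$ acts on Namikawa's period map and the reductive-group lifting argument needed to pin down the associated-graded normalization; once those are in place the equivalence $\overline{\lambda}\in W\lambda$ drops out of the injectivity of $\lambda\mapsto{\mathbf A}_\lambda$ on $\mathfrak{P}/W$.
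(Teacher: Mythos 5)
Your argument is correct and is what the authors intended — the paper's own proof of Lemma~\ref{lh4} is just the word ``Straightforward,'' and the intended reasoning is exactly the reduction you describe: the twist $\rho_*{\mathbf A}_\lambda$ is by definition ${\mathbf A}_{\overline\lambda}$ (this is what ``the real structure on $\mathfrak{P}$ induced by $\rho$'' means), a lift of $\rho$ is the same as a filtered algebra isomorphism ${\mathbf A}_\lambda\to\rho_*{\mathbf A}_\lambda$ inducing the identity on $\operatorname{gr}$, and the Losev--Namikawa parametrization does the rest. One thing worth flagging: the normalization detour in your final paragraph is both unnecessary and, as written, slightly off. The Losev--Namikawa parametrization classifies filtered quantizations up to isomorphism \emph{inducing the identity on $\operatorname{gr}$}, so the hypothesis $\overline{\lambda}\in W\lambda$ already hands you a filtered isomorphism $\psi\colon {\mathbf A}_\lambda\to\rho_*{\mathbf A}_\lambda$ with $\operatorname{gr}\psi=\operatorname{id}$, and then $\psi^{-1}\circ\rho$ is the desired lift — there is nothing to adjust. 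The adjustment you propose (composing with a lift of $(\operatorname{gr}\psi)^{-1}$) would in any case not go through in general: a graded Poisson automorphism of $A$ lifts to an automorphism of ${\mathbf A}_\lambda$ only when it fixes $\lambda$ in $\mathfrak{P}/W$, so the Remark after Lemma~\ref{inn} gives you uniqueness of a lift but not its existence. It is fortunate that no such adjustment is needed here.
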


\begin{proof} Straightforward.
\end{proof}

\subsection{Quaternionic structures}

As before, let $A$ be a graded Poisson algebra with Poisson bracket of degree $-2$.

\begin{Definition} We say that a~conjugation $\rho\colon A\to A$ is a {\it quaternionic structure}
if $\rho^2=s$.
\end{Definition}

If $\rho$ is a quaternionic structure then the operators $\rho$ and $\sigma:={\rm i}^d$ define an (${\mathbb R}$-linear) action of the quaternion group~$Q_8$ by algebra automorphisms of~$A$, which justifies the terminology. Also it is clear that if~$\rho$ is a quaternionic structure and $\rho'$ is equivalent to $\rho$ then $\rho'$ is also a~quaternionic structure.

\begin{Definition} Let ${\mathbf A}$ be an {\it even} quatization of~$A$. We say that a conjugation $\rho\colon {\mathbf A}\to {\mathbf A}$ is a {\it quaternionic structure} on~${\mathbf A}$ if $\rho^2=s$ and $\rho \circ \sigma=\sigma^{-1}\circ \rho$.
\end{Definition}

It is clear that any quaternionic structure on ${\mathbf A}$ gives rise to a quaternionic structure on $A$ (by taking the associated graded), and a quaternionic structure on $A$ uniquely lifts to a quantization ${\mathbf A}={\mathbf A}_\lambda$ in the case of conical symplectic singularities when $\overline{\lambda}=\lambda=-\lambda\in \mathfrak{P}/W$.

Now suppose that $A$ is a Poisson algebra with a quaternionic structure $\rho$ and a short conjugation-invariant star-product $*$.

\begin{Lemma} The star-product $*$ is Hermitian if and only if it is even.
\end{Lemma}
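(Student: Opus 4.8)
The plan is to translate both conditions into statements about the twisted trace $T$ on the quantization ${\mathbf A}$ and then compare them. Recall from Lemma~\ref{lh2} that a conjugation-invariant short star-product corresponds to a quantization ${\mathbf A}$ with an antilinear automorphism $\rho$ conjugating the nondegenerate $g$-twisted trace $T$; the product is Hermitian precisely when, in addition, $\rho^2=g$. Separately, by Proposition~\ref{criteven}, the star-product is even precisely when $g=s$ and $T$ is $\sigma$-invariant. Since $\rho$ is assumed to be a quaternionic structure on $A$, we have $\rho^2=s$ on $A$; by Lemma~\ref{lh3} the lift of $\rho$ to ${\mathbf A}$ is unique, and one checks that the lifted $\rho$ still satisfies $\rho^2=s$ (the square $\rho^2$ is a filtration-preserving automorphism of ${\mathbf A}$ commuting with $s$ whose associated graded is $s$, and $s$ is one such lift, so by the uniqueness remark after Lemma~\ref{inn}, $\rho^2=s$).

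First I would prove the easy direction: if $*$ is even, then $g=s$ by Proposition~\ref{criteven}, and since $\rho^2=s=g$ as just observed, Lemma~\ref{lh2}(ii) gives that $*$ is Hermitian. Conversely, suppose $*$ is Hermitian. Then by Lemma~\ref{lh2}(ii) we have $\rho^2=g$ on ${\mathbf A}$. But $\rho^2=s$, so $g=s$, which is the first half of the evenness criterion in Proposition~\ref{criteven}. It remains to show that $T$ is $\sigma$-invariant. For this I would use that $\rho$ conjugates $T$ (Lemma~\ref{lh2}(i)) together with the compatibility $\rho\circ\sigma=\sigma^{-1}\circ\rho$ coming from the definition of a quaternionic structure on ${\mathbf A}$ (noting that since $g=s$, the quantization ${\mathbf A}=(A,*)$ is even, so $\sigma={\rm i}^d$ is defined and this relation makes sense). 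Concretely, for ${\mathbf a}\in{\mathbf A}$ one computes $\overline{T(\sigma({\mathbf a}))}=T(\rho\sigma({\mathbf a}))=T(\sigma^{-1}\rho({\mathbf a}))$; since $\sigma$ is an antiautomorphism with $\sigma^2=s$ central and $T$ is $s$-invariant, $T(\sigma^{-1}\rho({\mathbf a}))=T(\sigma\rho({\mathbf a}))$, and using $s$-invariance of $T$ once more together with $T$ being a twisted trace one reduces $T(\sigma\rho({\mathbf a}))$ back to $\overline{T({\mathbf a})}$; hence $T(\sigma({\mathbf a}))=T({\mathbf a})$, i.e., $T$ is $\sigma$-invariant. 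By Proposition~\ref{criteven}, $*$ is even.

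The routine identities (the precise bookkeeping with $\sigma$, $s$, $\rho$ and the twisted-trace property in the last computation) I would leave to a short calculation; the only genuinely delicate point is making sure the lift of the quaternionic structure to ${\mathbf A}$ exists and continues to satisfy $\rho^2=s$ there — but this is exactly what Lemmas~\ref{inn} and~\ref{lh3} and the uniqueness of filtered lifts provide, so there is no real obstacle. I expect the main subtlety, such as it is, to be keeping straight that evenness of $*$ is what makes $\sigma$ available in the first place, so the two directions of the equivalence are not quite symmetric: in the forward direction one derives $g=s$ (hence ${\mathbf A}$ even) first, and only then invokes the $\rho$–$\sigma$ relation.
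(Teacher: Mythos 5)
Your route is quite different from the paper's, which proves this by a one-line constant-term computation: for $a,b\in A_d$,
$CT(\rho(a)*b)=\overline{CT(\rho(\rho(a)*b))}=\overline{CT(s(a)*\rho(b))}=(-1)^d\overline{CT(a*\rho(b))}$,
using only conjugation-invariance and $\rho^2=s$. Instead you pass through the twisted-trace machinery (Lemma~\ref{lh2}, Proposition~\ref{criteven}), which already restricts you to the nondegenerate case, whereas the Lemma carries no nondegeneracy hypothesis. The preliminary lifting discussion is also unnecessary: with quantization map $\phi=\mathrm{id}$, the map $\rho$ on ${\mathbf A}=(A,*)$ is literally the map $\rho$ on the vector space $A$, so $\rho^2=s$ holds on ${\mathbf A}$ by the very definition of a quaternionic structure on $A$; there is nothing to lift and no need for Lemma~\ref{lh3} (which moreover only applies to conical symplectic singularities, not assumed here).

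The serious problem is in the converse direction, and it is circular. To apply Proposition~\ref{criteven} you need $(A,*)$ to be an \emph{even} quantization, i.e., you need $\sigma=\mathrm{i}^d$ to be an \emph{antiautomorphism} of $(A,*)$. But unwinding $\sigma(a*b)=\sigma(b)*\sigma(a)$ for homogeneous $a,b$ gives exactly $C_k(a,b)=(-1)^kC_k(b,a)$ for all $k$ -- that is, $\mathrm{i}^d$ is an antiautomorphism of $(A,*)$ \emph{if and only if $*$ is even}, which is precisely what you are trying to prove. The claim ``since $g=s$, the quantization ${\mathbf A}=(A,*)$ is even, so $\sigma=\mathrm{i}^d$ is defined'' is unjustified: $g=s$ alone does not supply an antiautomorphism $\sigma$, only the automorphism $g$. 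So the appeal to $\sigma$-invariance of $T$ and the relation $\rho\sigma=\sigma^{-1}\rho$ inside Proposition~\ref{criteven}'s framework presupposes the conclusion. To repair this you would have to first establish, without assuming evenness, that $\mathrm{i}^d$ is an antiautomorphism of $(A,*)$ -- but that is the whole content of the lemma, and the paper's direct CT computation is designed to avoid exactly this circularity.
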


 \begin{proof} If $a,b\in A_d$ then
\begin{gather*}
CT(\rho(a)*b)=\overline{CT(\rho(\rho(a)*b))}=\overline{CT(s(a)*\rho(b))}=
(-1)^d\overline{CT(a*\rho(b))}.
\end{gather*}
This implies the statement.
\end{proof}

Thus, conjugation-invariant Hermitian star-products on a quaternionic Poisson algebra $A$ correspond to quaternionic quantizations ${\mathbf A}$ of $A$ together with a quantization map $\phi$ which commutes with $\sigma$ and $\rho$.

\subsection{The quarternionic structure on a HyperK\"ahler cone} \label{hypcoh}

If $A=\mathcal{O}(X)$ where $X$ is a hyperK\"ahler cone then $A$ has a canonical quaternionic structure. Indeed, $X$ has a natural real analytic ${\rm SU}(2)$-action (see \cite[Appendix~A and references therein]{BPR}), so we have degree-preserving operators $I$, $J$, $K$ on the space ${\mathcal O}(X_{{\mathbb R}})$ of real analytic polynomial functions on $X$ such that $IJ=JIs=K$ and $I^2=J^2=s$. The operator $I$ actually preserves the subspace of holomorphic polynomials, $A={\mathcal O}(X)$, and acts on $A_d$ by ${\rm i}^d$, i.e., $I=\sigma$. On the other hand, $J$ and $K$ map holomorphic polynomials to antiholomorphic ones. Thus we have an antilinear Poisson automorphism of $A$ given by $\rho(f)=\overline{Jf}$. Thus $\rho$ defines a quaternionic structure on~$A$, which lifts uniquely to every quantization ${\mathbf A}_\lambda$ of $A$ with $\overline{\lambda}=\lambda=-\lambda\in \mathfrak{P}/W$.

\begin{Example} The simplest example of a hyperK\"ahler cone is a finite dimensional right quaternionic vector space $V$ with a positive definite quaternionic Hermitian inner product. Then $V$ carries a Euclidean metric $(\,,\,)$ given by the real part of this inner product, and three real-linear operators $I$, $J$, $K$ corresponding to three complex structures on~$V$, which gives a hyperK\"ahler structure on~$V$. Let~$A$ be the algebra of polynomial functions on the complex vector space $(V,I)$. We have three real symplectic forms on~$V$,
\begin{gather*}
\omega_I(v,w)=(Iv,w),\qquad \omega_J(v,w)=(Jv,w),\qquad \omega_K(v,w)=(Kv,w).
\end{gather*}
The form $\omega=\omega_J+{\rm i}\omega_K$ is holomorphic for the complex structure $I$, so defines a Poisson bracket on $A$ of degree $-2$, whose symmetry group is ${\rm Sp}(V)$. The symmetry group of $((\,,\,),I,J,K)$ is a maximal compact subgroup ${\rm U}(V)$ of ${\rm Sp}(V)$, the quaternionic unitary group. As explained above, we have $\sigma(f)(v)=f({\rm i}v)$, $\rho(f)(v)=\overline{f(vJ)}$.

Now let $V$ be a complex vector space with a symplectic form $\omega$ and $G\subset {\rm Sp}(V)$ be a finite subgroup. There exists a positive definite quaternionic Hermitian structure on $V$ such that $G\subset {\rm U}(V)$, which is unique up to a positive scalar if $V$ is indecomposable as a symplectic representation of $G$. A choice of such a structure gives us a form $(\,,\,)$ and operators $I$, $J$, $K$ preserved by $G$, so
$X=V/G$ is also a hyperK\"ahler cone, and $A:=\mathcal{O}(X)$ is a quaternionic Poisson algebra.
\end{Example}

Let us now classify quaternionic structures on a hyperK\"ahler cone~$X$. As explained above, there is a canonical one, call it $\rho_0$. Note that $\operatorname{Ad}\rho_0$ is an involution on $\operatorname{Aut}({\mathbf A})$ which corresponds to its compact real form, denote it by $\gamma\mapsto \overline{\gamma}$. Now, any quaternionic structure $\rho$ is of the form \mbox{$\rho=\rho_0\gamma$} for some automorphism $\gamma\in \operatorname{Aut}({\mathbf A})$. The condition on~$\gamma$ is that $s=\rho^2=\rho_0\gamma\rho_0\gamma=s\overline \gamma\gamma$, i.e., $\overline\gamma\gamma=1$. Moreover, replacing~$\rho$ by an equivalent conjugation $h\rho h^{-1}$ corresponds to replacing~$\gamma$ with~$\overline{h}\gamma h^{-1}$. This implies that
equivalence classes of quaternionic structures correspond to the real forms of $\operatorname{Aut}({\mathbf A})$
given by the Galois cohomology classes in $H^1({\mathbb Z}/2,\operatorname{Aut}({\mathbf A}))$, where~${\mathbb Z}/2$ acts by $\gamma\mapsto \overline{\gamma}$; i.e., to real forms which are in the inner class of the compact form.\footnote{Note that the group $\operatorname{Aut}({\mathbf A})$ may be disconnected.}

\begin{Example} If $X$ is the nilpotent cone of a semisimple Lie algebra $\g$ (so $s=1$) then ${\mathbf A}={\mathbf A}_\lambda$ is a maximal quotient of the enveloping algebra ${\rm U}(\g)$ by the central character $\chi=\chi_\lambda$ of $\lambda$, and for conjugations to exist, this central character should be real. Then $\operatorname{Aut}({\mathbf A})$ is the subgroup $\operatorname{Aut}(\g)_\chi$ of $\operatorname{Aut}(\g)$ that preserves $\chi$. So the real forms we get are those which differ from the compact form by an involution $\tau\in \operatorname{Aut}(\g)_\chi$ (i.e., this form is defined by the equation $\overline{g}=\tau(g)$). In particular, if the automorphism group of the Dynkin diagram of $\g$ preserves $\chi$ (e.g., if this group is trivial) then conjugations up to equivalence correspond to real forms of $\g$. So for $\g=\mathfrak{sl}_2$ we have two conjugations up to equivalence~-- the compact one and the noncompact one.
\end{Example}

\subsection{Classification of Hermitian short star-products}

Let $X$ be a conical symplectic singularity equipped with a conjugation $\rho\colon A\to A$ of $A=\mathcal{O}(X)$ (for example, a hyperK\"ahler cone, cf.\ Section~\ref{hypcoh}). In this case, given $\lambda\in \mathfrak{P}$ with $\overline{\lambda}\in W\lambda$, we have the corresponding unique lift $\rho\colon {\mathbf A}_\lambda\to {\mathbf A}_\lambda$. Let $g=\rho^2$.

Recall that by Corollary~\ref{bun} nondegenerate short star-products for ${\mathbf A}_\lambda$ corresponding to $g$ are classified by nondegenerate traces $T\in (HH_0({\mathbf A}_\lambda,{\mathbf A}_\lambda g)^*)^s$ such that $T(1)=1$. Note that $g$ acts trivially on $(HH_0({\mathbf A}_\lambda,{\mathbf A}_\lambda g)^*)^s$, hence $\rho$ defines an antilinear involution $\rho_*$ (i.e., a real structure) on this vector space. Therefore, by Lemma~\ref{lh2} we have the following proposition.

\begin{Proposition} \label{hermi} With the above assumptions, Hermitian nondegenerate short star-products for ${\mathbf A}_\lambda$ correspond to real nondegenerate traces $T\in (HH_0({\mathbf A}_\lambda,{\mathbf A}_\lambda g)^*)^s$ such that $T(1)=1$, i.e., those invariant under $\rho_*$.
\end{Proposition}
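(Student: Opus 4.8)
The plan is to trace through the correspondence already established between short nondegenerate star-products and twisted traces, and simply check that the conjugation-invariance and Hermitian conditions translate into the $\rho_*$-invariance of $T$. This is essentially an assembly of Lemma~\ref{lh2} with the specific features of the conical symplectic singularity setting, so I expect the proof to be short.

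First I would recall the setup: by Corollary~\ref{bun} (and Proposition~\ref{kon}), nondegenerate short star-products for the fixed quantization ${\mathbf A}_\lambda$ that correspond to the automorphism $g=\rho^2$ are in bijection with nondegenerate $s$-invariant $g$-twisted traces $T$ on ${\mathbf A}_\lambda$ normalized by $T(1)=1$; equivalently, with nondegenerate normalized elements of $(HH_0({\mathbf A}_\lambda,{\mathbf A}_\lambda g)^s)^*$. Since by Corollary~\ref{bun1} such star-products are $Z_g^0$-invariant, and in particular $g$ itself acts trivially on $HH_0({\mathbf A}_\lambda,{\mathbf A}_\lambda g)$, the antilinear operator $\rho$ on ${\mathbf A}_\lambda$ (well-defined by Lemma~\ref{lh3}, using that ${\mathbf A}_\lambda$ is identified with $A$ as a filtered vector space and that the lift is unique) descends to a well-defined antilinear operator $\rho_*$ on $(HH_0({\mathbf A}_\lambda,{\mathbf A}_\lambda g)^s)^*$; the condition $\rho_*^2=\operatorname{id}$ follows because $\rho^2=g$ acts trivially. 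This makes $\rho_*$ a genuine real structure, so the assertion ``invariant under $\rho_*$'' is meaningful.

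Next I would invoke Lemma~\ref{lh2} directly. Part~(i) says that the star-product $*$ associated to $T$ is conjugation-invariant precisely when $\rho$ is an antilinear algebra automorphism of ${\mathbf A}_\lambda$ conjugating $T$, i.e., $\overline{T({\mathbf a})}=T(\rho({\mathbf a}))$ for all ${\mathbf a}$; but ``$\rho$ conjugates $T$'' is exactly the statement that $T$ is fixed by $\rho_*$. In our situation $\rho$ is already given as a conjugation on $A$ lifted to ${\mathbf A}_\lambda$ (this lift being an antilinear filtered automorphism by construction, since $\overline\lambda\in W\lambda$ guarantees existence by Lemma~\ref{lh4} and Lemma~\ref{lh3} guarantees uniqueness), so conjugation-invariance of $*$ is automatic once $T$ is $\rho_*$-invariant. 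Then part~(ii) of Lemma~\ref{lh2} says that, given conjugation-invariance, $*$ is Hermitian if and only if moreover $\rho^2=g$ — which holds here by the very choice $g=\rho^2$. Therefore Hermitian nondegenerate short star-products for ${\mathbf A}_\lambda$ correspond exactly to nondegenerate normalized $T\in(HH_0({\mathbf A}_\lambda,{\mathbf A}_\lambda g)^s)^*$ with $\rho_*(T)=T$, which is the claim.

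The only point requiring a little care — and the mildest obstacle — is checking that $\rho$ genuinely acts on the Hochschild homology $HH_0({\mathbf A}_\lambda,{\mathbf A}_\lambda g)$ and that the induced $\rho_*$ on the dual is compatible with the correspondence of Proposition~\ref{kon}; this uses that $\rho$ is antilinear and intertwines the bimodule structure of ${\mathbf A}_\lambda g$ with that of ${\mathbf A}_\lambda g$ twisted appropriately, together with the fact (from Corollary~\ref{bun1}) that $g$ acts trivially so that $\rho^2=g$ does not obstruct $\rho_*^2=\operatorname{id}$. Once this bookkeeping is in place the proposition is immediate, so I would simply state ``this follows from Lemma~\ref{lh2}'' after setting up $\rho_*$, as indeed the paragraph preceding the proposition already does.
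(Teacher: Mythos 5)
Your proof is correct and takes the same route as the paper: the paper's own argument is exactly the paragraph preceding the proposition (the classification from Corollary~\ref{bun}, the observation that $g$ acts trivially on $(HH_0({\mathbf A}_\lambda,{\mathbf A}_\lambda g)^*)^s$ so that $\rho_*$ is a genuine antilinear involution, and then the invocation of Lemma~\ref{lh2}). One minor point: the cleanest justification that $g$ acts trivially on $HH_0({\mathbf A}_\lambda,{\mathbf A}_\lambda g)$ is the elementary observation made at the start of Section~\ref{section3} that every $g$-twisted trace is automatically $g$-invariant (take ${\mathbf b}=1$ in $T({\mathbf a}{\mathbf b})=T({\mathbf b}g({\mathbf a}))$), rather than Corollary~\ref{bun1}, whose conclusion concerns $\operatorname{Lie}Z_g$ and does not by itself cover an element $g$ outside $Z_g^0$.
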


In particular, nondegenerate short star-products corresponding to $g$ exist if and only if there exist Hermitian ones (and in this case a Weil generic short star-product is nondegenerate). In other words, the Hermitian property ``goes along for the ride".

\begin{Example} Consider again the case when $X$ is the quadratic cone in ${\mathbb C}^3$, i.e., the $\mathfrak{sl}_2$ case. Recall that a nondegenerate short star-product exists if and only if the central character $\chi=\chi_\lambda:=\lambda(\lambda+2)/2$ is not equal to $n(n+2)/2$ where $n\ge 0$ is an integer.
Thus, for each such $\chi\in {\mathbb R}$ and each conjugation $\rho$ we have a unique Hermitian
even nondegenerate short star-product (the one invariant under $\mathfrak{sl}_2$).
One can show (see, e.g., \cite{BPR}) that for the compact conjugation, this product is never unitary.
On the other case, a direct computation shows that for the non-compact conjugation, this product is unitary
if and only if $\chi<0$ (see~\cite[Section~5.1]{BPR}). This is related to Bargmann's classification of unitary representations of ${\rm SL}_2({\mathbb R})$ (the principal and complementary series, which exist in this range).

Thus, for the nilpotent cone of higher rank simple Lie algebras (as well as orbit closures and Slodowy slices) we also expect a relation of the question of unitarity of short star-products and the theory of unitary representations of real forms of the Lie group~$G$ corresponding to~$\g$. Hopefully this will be a first step in the general theory of unitary representations of quantizations of symplectic singularities, which would generalize the theory of unitary representations of real reductive groups. We plan to discuss this in more detail in the second part of this work.
\end{Example}

\section{Existence of nondegenerate short star-products}\label{section4}

\subsection[The case $g=s$]{The case $\boldsymbol{g=s}$}

Let us now discuss existence of nondegenerate short star-products. We first focus on the case $g=s$, which includes the case of even star-products. We have shown that such star-products giving a quantization ${\mathbf A}$ are parametrized by a subset of the space $(HH_0({\mathbf A},{\mathbf A}s)^s)^*$ determined by a nondegeneracy condition.

\begin{Conjecture}\label{exi} \quad
\begin{enumerate}\itemsep=0pt
\item[$(i)$] For Weil generic $\lambda$ a Weil generic element $T\in (HH_0({\mathbf A}_\lambda,{\mathbf A}_\lambda s)^s)^*$ is nondegenerate, so it defines a nondegenerate short star-product corresponding to $g=s$.
\item[$(ii)$] For Weil generic $\lambda$ such that $-\lambda\in W\lambda$ a Weil generic element $T\in (HH_0({\mathbf A}_\lambda,{\mathbf A}_\lambda s)^\sigma)^*$ is nondegenerate, so it defines
an even nondegenerate short star-product.
\end{enumerate}
\end{Conjecture}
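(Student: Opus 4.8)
This is stated as a conjecture, so I outline the approach I would take — the one that works in the examples of Section~\ref{section4} — and indicate where it does not yet close in general.

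\textbf{Step 1: reduction to nonvanishing of Gram determinants.} For $\lambda\in\mathfrak P$ let $\mathcal T_\lambda:=\big(HH_0({\mathbf A}_\lambda,{\mathbf A}_\lambda s)^s\big)^*$ (in case~$(ii)$, its $\sigma$-invariant subspace), which is finite dimensional by Proposition~\ref{finde1}, and for $T\in\mathcal T_\lambda$ let $N_i(\lambda,T)$ be the Gram matrix of the bilinear form $({\mathbf a},{\mathbf b})\mapsto T({\mathbf{ab}})$ on $F_i{\mathbf A}_\lambda$; choosing a basis compatibly along the flat family $\{{\mathbf A}_\lambda\}$, the entries of $N_i$ depend linearly on $T$ and polynomially on $\lambda$. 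By definition $T$ is nondegenerate — equivalently, by Proposition~\ref{kon}, defines a nondegenerate short star-product — iff $\det N_i(\lambda,T)\neq 0$ for all~$i$. Hence for fixed~$\lambda$ the degenerate traces form the countable union $\bigcup_i\{T:\det N_i(\lambda,T)=0\}$ of hypersurfaces in $\mathcal T_\lambda$, so a Weil generic $T$ is nondegenerate as soon as each $\det N_i(\lambda,\cdot)$ is not identically zero on $\mathcal T_\lambda$. To obtain the statement for Weil generic~$\lambda$ it then suffices to show that, for each~$i$, $\det N_i$ is not identically zero on the total parameter space $\mathcal T=\bigsqcup_\lambda\mathcal T_\lambda$ (a constructible family over $\mathfrak P$, resp.\ over the linear subspace of even parameters in case~$(ii)$): for each~$i$ the bad~$\lambda$ then form a hypersurface, and one unions over~$i$.

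\textbf{Step 2: exhibiting one nondegenerate point on the dominant component.} By Step~1 it is enough to produce a single pair $(\lambda_0,T_0)$ with $\det N_i(\lambda_0,T_0)\neq 0$ for all~$i$ and lying on the component of $\mathcal T$ dominating $\mathfrak P$. For quotient singularities $X=V/G$ I would take $\lambda_0$ to be the point where ${\mathbf A}_{\lambda_0}={\mathbf W}(V)^G$ (the spherical symplectic reflection algebra at parameter $0$) and $T_0$ the trace on ${\mathbf W}(V)^G$ coming from the symmetric Moyal--Weyl product of Example~\ref{MV} (which is ${\rm Sp}(V)$-invariant, hence descends to ${\mathcal O}(V/G)$): it is short, even and nondegenerate, and $T_0$ is automatically $\sigma$-invariant, which also settles case~$(ii)$ at that point. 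For nilpotent cones, Slodowy slices, and more generally hyperK\"ahler cones with a well-behaved category $\mathcal O$, I would instead realize $T_0$ as a reduced character of a Verma or standard module (or an appropriate $W$-alternating combination of such), descended to ${\mathbf A}_\lambda$; here one uses the rationality of reduced characters with insertions in category~$\mathcal O$ (the result advertised in the introduction) to see that $T_0$ is an honest element of $\mathcal T_\lambda$, the Poisson-homology surjection of Proposition~\ref{finde1} (for $g=s$, from $HP_0(X^s)^s$) to see that such traces span the relevant part of $\mathcal T_\lambda$ at generic~$\lambda$, and an explicit computation of the associated form — as in the $\mathfrak{sl}_2$ case of Example~\ref{repth} — to see that it is nondegenerate for generic~$\lambda$.

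\textbf{Step 3: the main obstacle.} The genuine difficulty, and the reason the statement remains a conjecture, is to control $\dim\mathcal T_\lambda=\dim HH_0({\mathbf A}_\lambda,{\mathbf A}_\lambda s)^s$ uniformly in~$\lambda$. Proposition~\ref{finde1} only supplies the upper bound $\dim HP_0(X^s)^s$; one needs a semicontinuity — ideally local freeness — statement for the Hochschild homology of the flat family $\{{\mathbf A}_\lambda\}$ over $\mathfrak P$, together with an understanding of its behaviour near the special value $\lambda_0$ of Step~2, where ${\mathbf A}_{\lambda_0}$ is typically more symmetric and $\dim HH_0$ may jump up — so that a priori $\lambda_0$ could fail to lie on the dominant component. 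Absent such input, or an algebraic family of modules $M_\lambda$ producing a visibly nondegenerate trace at \emph{generic}~$\lambda$, the argument above proves the conjecture only case by case (as in Section~\ref{section4}). Thus the hard part is to show that the Poisson-homology and category~$\mathcal O$ descriptions of the twisted traces glue into a coherent sheaf over $\mathfrak P$ with generic fibre of the expected dimension, and that the known short star-products live in that generic fibre.
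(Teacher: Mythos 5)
This statement is a conjecture, and the paper does not attempt a general proof; it only verifies special cases in Sections~4.1.1--4.1.2 and~4.2.3. You correctly recognize this and your outline matches the paper's actual strategy for those cases: reduce nondegeneracy to nonvanishing of Gram determinants (your Step~1, which is precisely what the paper's ``Weil generic'' quantifier encodes), then exhibit a single nondegenerate pair. For quotient singularities $V/G$ the paper indeed deforms from $c=0$, where the $s$-twisted trace on ${\mathbf{W}}(V)^G$ comes from the symmetric Moyal--Weyl product, after first computing $HH_0({\mathbf{H}}_c,{\mathbf{H}}_cs)^s\cong\mathbb{C}[\Sigma_-]^G$ uniformly (so the dimension does not jump at $c=0$, which defuses the concern you raise in Step~3 for that case). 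For maximal primitive quotients of ${\rm U}(\mathfrak g)$ the paper's nondegeneracy argument is the positivity computation of Proposition~\ref{nondeLie} using unitarity of finite-dimensional representations, rather than a direct computation of the Gram form; your ``reduced character of a Verma or $W$-alternating combination'' description is a fair paraphrase, since the finite-dimensional character is such a combination and Example~\ref{exa2} makes the category~$\mathcal O$ picture explicit. Your Step~3 correctly identifies what is missing for a general proof -- a flatness or semicontinuity statement for $HH_0({\mathbf A}_\lambda,{\mathbf A}_\lambda s)^s$ along $\mathfrak P$ together with a guarantee that the known nondegenerate special points sit on the dominant component -- and this is indeed why the statement remains a conjecture in the paper.
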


This conjecture generalizes a conjecture from~\cite{BPR}.

Below we check Conjecture~\ref{exi} in a number of important special cases.

\subsubsection{Spherical symplectic reflection algebras}\label{ssra}

Let $V$ be a finite dimensional symplectic vector space with symplectic form $\omega$,
and $G\subset {\rm Sp}(V)$ a finite subgroup such that $\big({\wedge}^2V^*\big)^G={\mathbb C}\omega$.
Let $S$ be the set of symplectic reflections in~$G$ (i.e., elements $g$ such that $(1-g)|_V$ has rank $2$)
and $c\colon S\to {\mathbb C}$ a $G$-invariant function. The {\it symplectic reflection algebra} ${\mathbf{H}}_c={\mathbf{H}}_c(G,V)$ attached to this data is the quotient of ${\mathbb C}G\ltimes TV$ by the relations
\begin{gather*}
[v,w]=\omega(v,w)+\sum_{g\in S}c_g\omega((1-g)v,(1-g)w)g,
\end{gather*}
see \cite{EG}. Let ${\mathbf{e}}=|G|^{-1}\sum\limits_{g\in G}g$. The spherical subalgebra of ${\mathbf{H}}_c$ is the algebra ${\mathbf A}_c:={\mathbf{e}} {\mathbf{H}}_c{\mathbf{e}}$. It is shown in~\cite{EG} that ${\mathbf A}_c$ is a filtered deformation of $A:=(SV)^G$. Moreover, by the result of~\cite{L2}, this is the most general such deformation (as in this case $\mathfrak{P}\cong {\mathbb C}[S]^G$); here $\lambda$ is related to $c$ by a nonhomogeneous invertible linear transformation.

\begin{Theorem} Conjecture~{\rm \ref{exi}} holds for ${\mathbf A}_c$.
\end{Theorem}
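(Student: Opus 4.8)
The plan is to reduce, via the finiteness results of Section~\ref{section3}, to producing for Weil generic $c$ a single nondegenerate $s$-invariant $s$-twisted trace on $\mathbf A_c$ which deforms the symmetric Moyal--Weyl trace at $c=0$, and then to conclude by genericity.

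\textbf{Reduction.} Since $X=\operatorname{Spec}(SV)^G$ has finitely many symplectic leaves, Corollary~\ref{bun} and Proposition~\ref{finde1} show that for each $c$ the space $\mathcal T_c:=(HH_0(\mathbf A_c,\mathbf A_c s)^s)^*$ is finite dimensional and that the nondegenerate short star-products of $\mathbf A_c$ with $g=s$ are exactly the points of $\mathcal T_c$ lying off the countably many hypersurfaces $\det\big(\langle\,,\,\rangle_T|_{A_d}\big)=0$, $d\ge 0$. Hence Conjecture~\ref{exi}(i) for $\mathbf A_c$ reduces to: for Weil generic $c$, $\mathcal T_c$ contains a nondegenerate $T_c$. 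It suffices to build a family $c\mapsto T_c\in\mathcal T_c$ algebraic in $c$ with $T_0$ nondegenerate, for then each $\det\big(\langle\,,\,\rangle_{T_c}|_{A_d}\big)$ is algebraic in $c$ and nonzero at $c=0$, hence nonvanishing off a hypersurface, and the countable intersection of these conditions yields, for Weil generic $c$, nondegeneracy of $T_c$ and therefore of a Weil generic element of $\mathcal T_c$. Part~(ii) is proved the same way inside the subspace $(HH_0(\mathbf A_c,\mathbf A_c s)^\sigma)^*$ of Corollary~\ref{criteven1} (over the subvariety $\{-\lambda\in W\lambda\}$ of the parameter space), once one observes that the $T_0$ below is $\sigma$-invariant.

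\textbf{Construction of the family.} Work over the parameter space $B\cong\mathbb C[S]^G$ with the flat family $\mathbf A_c=\mathbf e\mathbf H_c\mathbf e$, and let $\mathcal H$ be the sheaf on $B$ with fibre $HH_0(\mathbf A_c,\mathbf A_c s)^s$. By Morita invariance of twisted Hochschild homology and the Morita equivalence $\mathbf A_c\sim\mathbf H_c$ (valid for Weil generic $c$), $\mathcal H$ agrees generically with $c\mapsto HH_0(\mathbf H_c,\mathbf H_c s)^s$; using the PBW decomposition $\mathbf H_c=SV\otimes\mathbb C G$ and the standard computation of Hochschild homology of algebras of the form $\mathbf W(V)\rtimes G$ (conjugacy-class decomposition, fixed loci of the elements $\pm h$), one finds that already in a neighbourhood of $c=0$ the fibre dimension equals the generic value $\dim HP_0\big((V/G)^s\big)^s$, so $\mathcal H$ is locally free near $0$. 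At $c=0$ one has $\mathbf A_0=\mathbf W(V)^G=\mathbf e\,\mathbf W(V)\,\mathbf e$, and the symmetric Moyal--Weyl trace on $\mathbf W(V)$ restricts to an element $T_0\in\mathcal H_0$; extend it to an algebraic section $c\mapsto T_c$ of $\mathcal H$. By Example~\ref{MV}, $T_0$ is nondegenerate on every $F_i$ and $\sigma$-invariant (the symmetric Moyal--Weyl product being even), which is precisely what the Reduction requires. Alternatively, when $\mathbf A_c$ carries an Euler grading -- e.g.\ in the rational Cherednik case, where $s$ equals $-1$ in the Euler torus $\mathbb C^*\subset\operatorname{Aut}(\mathbf A_c)$ -- one obtains $T_c$ explicitly as the $t\to-1$ value of a suitable combination of the $t^{\mathbf h}$-twisted (reduced) characters of the standard modules $\Delta_c(\tau)$ in category $\mathcal O$ of \cite{BLPW,L4}, exactly as the $g=1$ trace was assembled from Verma characters in the $\mathfrak{sl}_2$ Example~\ref{repth}; the rationality of these reduced characters with arbitrary insertions is the input flagged in the Introduction.

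\textbf{Main obstacle.} The crux is the local freeness of $\mathcal H$ near $c=0$ -- equivalently, that $\dim HH_0(\mathbf A_0,\mathbf A_0 s)^s$ does not exceed the generic value $\dim HP_0((V/G)^s)^s$, i.e.\ that twisted Hochschild homology does not jump at the classical point. This rests on the explicit computation for $\mathbf W(V)\rtimes G$ together with the non-jumping of $HH_0(\mathbf A_c,\mathbf A_c s)$ for Weil generic $c$; without it one cannot guarantee that the Moyal--Weyl trace survives the deformation. In the category-$\mathcal O$ route the corresponding difficulty is the regularity at $t=-1$ of the reduced twisted character, uniformly in $c$ (ruling out a spurious pole for $c$ lying on a hypersurface through $0$, which already nearly occurs for $G=\mathbb Z/2\subset{\rm SL}_2$), handled by the Laurent-expansion argument of Example~\ref{repth}.
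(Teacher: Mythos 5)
Your proposal takes essentially the same approach as the paper: reduce via the Morita equivalence $\mathbf{A}_c\sim\mathbf{H}_c$ to the Etingof--Ginzburg computation of twisted Hochschild homology of $\mathbf{W}(V)\rtimes G$, anchor the family at $c=0$ with the nondegenerate $\sigma$-invariant Moyal--Weyl trace, and propagate by a deformation/genericity argument. The ``main obstacle'' you correctly isolate --- that $\dim HH_0(\mathbf{H}_c,\mathbf{H}_c s)^s$ does not jump at $c=0$ --- is exactly what the paper settles by citing the identification $HH_0(\mathbf{H}_c,\mathbf{H}_c s)^s\cong\mathbb{C}[\Sigma_-]^G$ (with $\Sigma_-=\{g\in G:(1+g)|_V \text{ invertible}\}$), valid both for Weil generic $c$ and at $c=0$.
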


\begin{proof} Recall that the automorphism $s$ is defined by $s|_V=-1$. Let us compute the space $HH_0({\mathbf A}_c,{\mathbf A}_cs)^s$ for Weil generic $c$. By the results of \cite{EG}, the algebra ${\mathbf A}_c$ is Morita equivalent to~${\mathbf{H}}_c$, so this space is isomorphic to $HH_0({\mathbf{H}}_c,{\mathbf{H}}_cs)^s$ (as this Morita equivalence is $s$-invariant).

Consider first the case when $s\in G$. In this case, $HH_0({\mathbf{H}}_c,{\mathbf{H}}_cs)^s=HH_0({\mathbf{H}}_c,{\mathbf{H}}_c)$. This group was computed in \cite{EG} and is isomorphic to ${\mathbb C}[\Sigma_+]^G$, where $\Sigma_+$ is the set of elements of $G$ such that $(1-g)|_V$ is invertible. Note also that the same result holds for $c=0$.

Now consider the case $s\notin G$. In this case a similar argument shows that
$HH_0({\mathbf{H}}_c,{\mathbf{H}}_cs)^s$ is isomorphic to ${\mathbb C}[\Sigma_-]^G$, where $\Sigma_-$ is the set of elements of $G$ such that $(1+g)|_V$ is invertible, and again this result holds for $c=0$. Note that this holds regardless of whether~$s$ belongs to~$G$, since when $s\in G$ then multiplication by $s$ defines a bijection
$\Sigma_+\to \Sigma_-$. Note also that this space is always nonzero since $1\in \Sigma_-$.

Finally, ${\mathbf A}_0={\rm Weyl}(V)^G$, and this algebra admits a nondegenerate $s$-twisted trace corresponding to the Moyal--Weyl product. By a deformation argument, this implies that for Weil generic $c\ne 0$, a Weil generic
element of ${\mathbb C}[\Sigma_-]^G$ gives rise to a nondegenerate $s$-twisted trace, as desired.

Finally, in the even case we have to show that we have a nondegenerate $\sigma$-invariant trace.
It is easy to see that for $c=0$ the map $\sigma$ acts on ${\mathbb C}[\Sigma_-]^G$ by $\sigma(g)=g^{-1}$.
This shows that even nondegenerate star-products exist for Weil generic $c$ and are parametrized by Weil generic $G$-invariant functions on $\Sigma_-$ stable under $g\mapsto g^{-1}$, modulo scaling.
\end{proof}

\begin{Example} Suppose $\dim V=2$. This corresponds to the case of Kleinian singularities,
and the possible groups $G$ are in bijection with simply-laced finite Dynkin diagrams
via the McKay's correspondence. Namely, $G={\mathbb Z}/m$ for $A_{m-1}$, $G$ is the double cover
of the dihedral group~$I_{m-2}$ for~$D_m$, of the tetrahedral group ${\mathbb A}_4$ for~$E_6$, the cube group~$S_4$ for $E_7$ and the icosahedral group~${\mathbb A}_5$ for~$E_8$. The Namikawa Weyl group coincides with the usual Weyl group of the corresponding root system. Looking at the conjugacy classes of elements not equal to~$-1$ modulo $g\mapsto g^{-1}$, we find the following facts about even nondegenerate short star-products.
\begin{enumerate}\itemsep=0pt
\item The number $m_e$ of parameters for even quantizations equals the number of orbits of the dualization involution (for Lie algebra representations) on the finite Dynkin diagram. So~$m_e$ equals $\big\lfloor\frac{m+1}{2}\big\rfloor$ for~$A_m$, $m-1$ for $D_m$ with $m$ odd, $4$ for $E_6$, and $m$ in all other cases.
\item The number of parameters for even nondegenerate short star-products for each quantization is $m_e-1$, except the case of~$A_m$ with even~$m$, when it equals~$m_e$.
\end{enumerate}
\end{Example}

\subsubsection{Maximal quotients of the enveloping algebra of a simple Lie algebra} \label{maxquot}

Let $\g$ be a finite dimensional simple Lie algebra with Cartan subalegbra $\h$, ${\rm U}(\g)$ its enveloping algebra, $Z(\g)$ the center of ${\rm U}(\g)$, $\lambda\in \h^*$ a complex weight for $\g$, and $\chi\colon Z(\g)\to {\mathbb C}$ the central character of the Verma module with highest weight $\lambda$. Let ${\mathbf A}_\lambda:=U_\chi(\g)={\rm U}(\g)/(z-\chi(z), z\in Z(\g))$ be the corresponding maximal primitive quotient. Then ${\mathbf A}_\lambda$ is naturally a filtered quantization of $A=\mathcal{O}(X)$, where~$X$ is the nilpotent cone of $\g$ (where elements of~$\g$ have degree~$2$). In this case, since all the degrees are even, we have $s=1$. Moreover, if $\chi$ is self-dual, the antipode of~${\rm U}(\g)$ defines the antiautomorphism~$\sigma$ of ${\mathbf A}_\lambda$, so that ${\mathbf A}_\lambda$ is an even quantization of~$A$.

Let us first classify nondegenerate untwisted traces. It is well known that $HH_0({\mathbf A}_\lambda,{\mathbf A}_\lambda)={\mathbb C}$. Namely, if $\chi$ is the central character of a finite dimensional representation~$V$ then the unique trace~$T$ normalized by the condition $T(1)=1$ is given by the normalized character of~$V$:
\begin{gather*}
T(a)=\frac{\operatorname{Tr}(a|_V)}{\dim V}.
\end{gather*}
This trace is defined by a rational function of~$\chi$, so for general $\chi$ it can be obtained by interpolation.

\begin{Proposition}\label{nondeLie} $T$ is nondegenerate for Weil generic~$\lambda$ $($or~$\chi)$.
\end{Proposition}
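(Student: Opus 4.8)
The plan is to reduce the statement to a finite-dimensional computation via interpolation, and then to exhibit one explicit value of $\chi$ at which the form $(\,,\,)_T$ is nondegenerate. First I would recall that, as noted just above, $HH_0({\mathbf A}_\lambda,{\mathbf A}_\lambda)$ is one-dimensional and the normalized trace $T=T_\chi$ is a rational function of $\chi$ (equivalently of $\lambda$): for $\lambda$ dominant integral, $T(a)=\operatorname{Tr}(a|_{V_\lambda})/\dim V_\lambda$, and these values interpolate to a rational function on $\h^*$ (or on $\operatorname{Spec}Z(\g)$). Consequently, fixing any homogeneous basis of $A_d={\mathcal O}(X)_d$ for each $d$, the Gram matrix of the form $\langle a,b\rangle=\operatorname{CT}(a*_\chi b)$ in degree $d$ has entries that are rational functions of $\lambda$, so its determinant $\Delta_d(\lambda)$ is a rational function. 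The form $\langle\,,\,\rangle$ is nondegenerate in degree $d$ exactly when $\Delta_d(\lambda)\ne 0$, and it is nondegenerate in all degrees on the complement of the countable union $\bigcup_d\{\Delta_d=0\}$ of proper algebraic hypersurfaces (proper provided each $\Delta_d\not\equiv 0$) — this is precisely what ``Weil generic'' means. So the whole statement comes down to: for each $d$, the rational function $\Delta_d$ is not identically zero.

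To prove $\Delta_d\not\equiv 0$ it suffices to find a single $\lambda_0$ (or a single central character) at which the form is nondegenerate in degree $d$; in fact it is cleanest to find one $\lambda_0$ working simultaneously for all $d$. The natural candidate is the central character for which ${\mathbf A}_\lambda$ is a \emph{primitive quotient associated to the principal (regular) nilpotent} or, better, a value where the associated representation is large enough to separate all of $A$. Concretely, one approach: take $\lambda=\rho$-shifted so that $\chi$ is the central character of the trivial representation and instead use a different, faithful-enough module — but the trivial representation gives a degenerate form, so that does not work directly. A better route is to use the fact that for generic $\chi$ the algebra ${\mathbf A}_\lambda=U_\chi(\g)$ is simple (it is primitive with trivial center by construction, and for regular generic $\chi$ it is simple, e.g.\ by Duflo's theorem / Soergel), hence by Lemma~\ref{ide} the kernel of $(\,,\,)_T$ is $0$ for \emph{any} nonzero trace $T$. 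This already shows the \emph{global} kernel of the form vanishes generically; to upgrade to nondegeneracy in each fixed degree $d$ one then argues that the left kernel $K$ is a graded ideal (it is $g$-invariant with $g=1$, and for $\g$ one can arrange $T$ and hence the form to be graded), so $K=0$ forces $K\cap A_d=0$, i.e.\ $\Delta_d(\lambda)\ne 0$, for $\lambda$ outside the locus where ${\mathbf A}_\lambda$ fails to be simple. Since that locus is a countable union of hypersurfaces (it is the union over all proper primitive ideals of their associated subvarieties of $\operatorname{Spec}Z(\g)$, each of positive codimension as $X$ has dimension $>0$), we get nondegeneracy in every degree for Weil generic $\lambda$.

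The remaining subtlety is to make sure the trace $T$ really is \emph{graded}, i.e.\ that $\langle a,b\rangle=0$ for $\deg a\ne\deg b$, so that the degree-$d$ Gram determinants make sense and so that the ``ideal $K$ is graded'' step is legitimate. This follows from the standard structure already set up in the excerpt: $s=1$ here (all generators of $\g$ have degree $2$, so everything is even), and the short star-product attached to $T$ by Proposition~\ref{kon} automatically makes $A=\bigoplus_d A_d$ an orthogonal decomposition — that is built into shortness. So once one knows such a short star-product exists (which it does, since $X$ is the minimal... no, the full nilpotent cone — but existence of the short star-product from the unique trace is exactly Proposition~\ref{kon} applied once we know $(\,,\,)_T$ is nondegenerate), the grading compatibility is automatic; conversely to \emph{start} the argument one uses that $T$ itself, being the unique trace up to scale, is $\g$-invariant hence homogeneous of degree $0$ in a suitable sense, forcing $\langle A_i,A_j\rangle=0$ for $i\ne j$ by a weight/degree count on $A_i\otimes A_j\to A_0={\mathbb C}$.

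\textbf{Main obstacle.} The delicate point is the passage from ``$(\,,\,)_T$ has zero kernel for generic $\chi$'' (easy, from simplicity of $U_\chi(\g)$ and Lemma~\ref{ide}) to ``$(\,,\,)_T$ is nondegenerate \emph{in each fixed degree} for generic $\chi$'' — i.e.\ ruling out, for each $d$, that the Gram determinant $\Delta_d$ vanishes identically. The clean way around this is exactly the interpolation argument combined with a \emph{single explicit favorable point}: one exhibits one central character (e.g.\ one where ${\mathbf A}_\lambda$ acts faithfully on a module whose associated graded filtration sees all of ${\mathcal O}(X)$, or one degenerates to a case handled in Example~\ref{repth} or to a reductive-Lie-algebra situation where the answer is classical) at which $\Delta_d\ne 0$ for every $d$ simultaneously; since each $\Delta_d$ is a rational (in fact polynomial, after clearing denominators) function of $\lambda$, a single such point forces $\Delta_d\not\equiv0$ for all $d$, and then the countable union of their zero loci is the desired Weil-generic exceptional set. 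Identifying that favorable point and verifying nondegeneracy there is where the real work lies; I expect it to follow from simplicity of $U_\chi(\g)$ for $\chi$ regular together with gradedness of $T$, giving $K=0$ as a graded ideal and hence $\Delta_d\ne0$ for all $d$ at every such $\chi$, so that the exceptional set is contained in the (countable, codimension $\ge1$) non-simplicity locus.
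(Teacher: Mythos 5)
Your plan shares the paper's overall interpolation strategy (the Gram determinants $\Delta^{(i)}(\lambda)$ of $(\,,\,)_T$ on the finite-dimensional pieces $F_i{\mathbf A}_\lambda$, with respect to a fixed $\g$-equivariant linear identification ${\mathbf A}_\lambda\cong A$, are rational in $\lambda$, so one need only exhibit one $\lambda$ for each $i$ where $\Delta^{(i)}\ne 0$), but the mechanism you propose for producing the favorable $\lambda$ is different from the paper's and, as written, has a genuine gap.

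The gap is precisely the step you flag as the ``main obstacle.'' You want to pass from ``$U_\chi(\g)$ is simple for generic $\chi$, hence by Lemma~\ref{ide} the \emph{global} kernel of $(\,,\,)_T$ on ${\mathbf A}_\lambda$ vanishes'' to ``$(\,,\,)_T$ is nondegenerate on each $F_i{\mathbf A}_\lambda$.'' As the paper notes explicitly right after defining nondegeneracy of $T$, these are \emph{not} equivalent: the global kernel of an infinite-dimensional form can be zero while the restriction to some $F_i$ is degenerate, unless the form is block-diagonal with respect to some grading. Your proposed fix is to argue block-diagonality (``$K$ is a graded ideal,'' ``the form is graded'') from shortness of the $\g$-invariant star-product. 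But shortness of that star-product on $\mathcal{O}(\mathcal{N})$ is not available a priori: it is exactly the sort of conclusion one extracts from Proposition~\ref{kon} \emph{after} establishing nondegeneracy of $T$, so the argument is circular. And $\g$-invariance alone does not force $A_m\perp A_n$ for $m\ne n$; already for $\mathfrak{sl}_3$ one has trivial summands in $\mathcal{H}_m\otimes\mathcal{H}_n$ with $m\ne n$, so the degree decomposition is not forced to be orthogonal by equivariance. (For $\mathfrak{sl}_2$ the degree pieces are irreducible and pairwise non-isomorphic, which is why the block structure looks automatic there — but that is special.)

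The paper avoids this issue entirely by never reducing to a kernel statement. For each fixed $i$, it chooses $\lambda$ dominant integral with coordinates large enough that the finite-dimensional module $V=V_\lambda$ is \emph{faithful on $F_i{\mathbf A}_\lambda$} (using ${\mathbf A}_\lambda=\oplus_\mu V_\mu\otimes V_\mu^*[0]$ versus $\operatorname{End}(V)=\oplus_\mu V_\mu\otimes\operatorname{Hom}(V_\mu\otimes V,V)$), and then uses unitarity of $V$: the compact $*$-structure gives $({\mathbf a},{\mathbf a}^\dagger)_T = \operatorname{Tr}_V\big(\pi_V({\mathbf a})\pi_V({\mathbf a})^\dagger\big)/\dim V > 0$ whenever ${\mathbf a}\in F_i{\mathbf A}_\lambda$ is nonzero. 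This positivity gives nondegeneracy on $F_i{\mathbf A}_\lambda$ directly, with no appeal to simplicity of $U_\chi$ or to an a priori block structure. If you want to complete your proof, you should replace the simplicity-plus-gradedness step by exactly this positivity argument on a large finite-dimensional $V_\lambda$.
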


\begin{proof} It suffices to show that for each $i$, $T$ is nondegenerate on $F_i{\mathbf A}_\lambda$ for Weil generic~$\lambda$. To this end, it suffices to show that this is so for at least one $\lambda$. To do so, take $\lambda$ dominant integral and consider the irreducible finite dimensional representation $V=V_\lambda$ of $\g$. We claim that for each~$i$ one can pick $\lambda$ so that $F_i{\mathbf A}_\lambda$ has trivial intersection with the annihilator $\operatorname{Ann}(V)$ of $V$ in~${\mathbf A}_\lambda$. Indeed, recall that as a $\g$-module, ${\mathbf A}_\lambda=\oplus_\mu V_\mu\otimes V_\mu^*[0]$, where $\mu$ runs over dominant integral weights. On the other hand, $\operatorname{End}(V)=\oplus_\mu V_\mu\otimes \operatorname{Hom}(V_\mu\otimes V,V)$. So if $\operatorname{Hom}(V_\mu\otimes V,V)\cong V_\mu^*[0]$ for all $V_\mu$ occurring in $F_i{\mathbf A}_\lambda$, then $F_i{\mathbf A}_\lambda\cap \operatorname{Ann}(V)=0$. Clearly, this happens if all the coordinates of $\lambda$ are large enough.

Now note that $V$ is a unitary representation, so $\operatorname{End}(V)$ has an antilinear antiautomorphism
$B\mapsto B^\dagger$, such that $\operatorname{Tr}_V\big(B^\dagger B\big)>0$ for $B\ne 0$. Let $\pi_V\colon {\mathbf A}_\lambda\to \operatorname{End}(V)$ be the representation map. We have $\pi_V({\mathbf a})^\dagger=\pi_V\big({\mathbf a}^\dagger\big)$, where ${\mathbf a}\mapsto {\mathbf a}^\dagger$ is the compact $*$-structure on ${\mathbf A}_\lambda$. For ${\mathbf a}\in {\mathbf A}_\lambda$, we have
\begin{gather*}
\big({\mathbf a},{\mathbf a}^\dagger\big)_T=\frac{\operatorname{Tr}_V\big(\pi_V({\mathbf a})\pi_V({\mathbf a})^\dagger\big)}{\dim V}.
\end{gather*}
If ${\mathbf a}\in F_i{\mathbf A}_\lambda$ then $\pi_V({\mathbf a})\ne 0$, so we get that $\big({\mathbf a},{\mathbf a}^\dagger\big)_T>0$. Thus $(\,,\,)_T$ is nondegenerate on $F_i{\mathbf A}_\lambda$, as desired.
\end{proof}

\begin{Remark}\label{rich} \looseness=1 More generally, let $y\in \h\subset \g$ be a semisimple element, $L=Z_y$ its centralizer (a~Levi subgroup of the corresponding group $G$) and $P$ a parabolic in $G$ whose Levi subgroup is~$L$. Let~$U$ be the unipotent radical of~$P$ and $u\in \operatorname{Lie}U\subset \g$ a Richardson element (i.e., one whose $P$-orbit is dense in $\operatorname{Lie}U$), which exists by a classical theorem of Richardson. We view~$u$ as an element of~$\g^*$ and consider the coadjoint orbit $\mathcal{O}_u$ of $u$, called the Richardson orbit corresponding to~$y$ (e.g., for $\g=\mathfrak{sl}_n$ any nilpotent orbit arises in this way). Let~$X$ be the closure of $\mathcal{O}_u$. Let $J\subset I$ be the set of $j$ such that $\alpha_j(y)=0$ (e.g., if $X$ is the nilpotent cone then~$y$ is regular and $J=\varnothing$ and if $X=0$ then $y=0$ and $J=I$). In this case $\mathcal O(X)$ admits a~family of quantizations ${\mathbf A}_\lambda$ parametrized by weights~$\lambda$ such that $\lambda(h_i)=0$ for $i\in J$; namely, ${\mathbf A}_\lambda$ is the image of ${\rm U}(\g)$ in the ${\mathbb C}$-endomorphism algebra of the parabolic Verma module $M_P(\lambda)$ with highest weight $\lambda$ (generated by a vector $v$ with defining relations
$e_iv=0$, $h_iv=\lambda(h_i)v$ for all $i$ and $f_iv=0$ for $i\in J$). In this case, we still have $s=1$ and $HH_0({\mathbf A}_\lambda,{\mathbf A}_\lambda)={\mathbb C}$, so we have a unique trace up to scaling. A similar positivity argument to the above, using finite dimensional quotients of $M_P(\lambda)$ (which occur when $\lambda$ is dominant integral), shows that this trace is nondegenerate for Weil generic $\lambda$, hence gives rise to a nondegenerate even short star-product.
\end{Remark}

\subsection{The general case}

Finally, let us describe the construction of nondegenerate twisted traces for generic $g\in \operatorname{Aut}({\mathbf A})^0$, in the case of hyperK\"ahler cones. We plan to give more details and also treat the case when $g\notin \operatorname{Aut}({\mathbf A})^0$ in a subsequent paper.

\subsubsection{Characters of Verma modules}

We may assume that $g$ is in a maximal torus ${\mathbf T}\subset \operatorname{Aut}({\mathbf A})$. Let us adapt the construction from Sections~\ref{repth1},~\ref{repth} which uses representation theory (category $\mathcal{O}$). Namely, let $\nu\colon {\mathbb C}^\times\to {\mathbf T}$ be a~1-parameter subgroup.
By Lemma~\ref{inn}, the derivation defined by~$\nu$ is inner and given by some element ${\mathbf{h}}\in {\mathbf A}$, such that the operator $[{\mathbf{h}},?]$ on ${\mathbf A}$ is diagonalizable with integer spectrum. For $m\in {\mathbb Z}$ denote by ${\mathbf A}(m)$ the space of ${\mathbf a}\in {\mathbf A}$ such that $[{\mathbf{h}},{\mathbf a}]=m{\mathbf a}$; we have ${\mathbf A}=\oplus_{m\in {\mathbb Z}}{\mathbf A}(m)$.

Following \cite[Section~4]{L4} and \cite{BLPW} (see also~\cite{G}), in this situation one can define the category $\mathcal{O}=\mathcal{O}(\nu,\lambda)$ of representations of ${\mathbf A}={\mathbf A}_\lambda$. Namely, we define
\begin{gather*}
{\mathbf A}^{>0}:=\oplus_{m>0}{\mathbf A}(m), \qquad {\mathbf A}^{<0}:=\oplus_{m<0}{\mathbf A}(m),\\
{\mathbf A}^{\ge 0}:=\oplus_{m\ge 0}{\mathbf A}(m), \qquad {\mathbf A}^{\le 0}:=\oplus_{m\le 0}{\mathbf A}(m).
\end{gather*}
Also we define the algebra
\begin{gather*}
C_\nu({\mathbf A})={\mathbf A}(0)/\sum_{m>0}{\mathbf A}(-m){\mathbf A}(m),
\end{gather*}
called the {\it Cartan subquotient} of ${\mathbf A}$. We define the category $\mathcal O(\nu,\lambda)$ to be the category of finitely generated ${\mathbf A}$-modules $M$ on which ${\mathbf A}^{>0}$ acts locally nilpotently (i.e., for every vector $v\in M$ the space ${\mathbf A}^{>0}v$ is finite dimensional and has a filtration strictly preserved by ${\mathbf A}^{>0}$).

Let us assume that the only fixed point of ${\mathbf T}$ on $X$ is the origin. In this case we can (and will) choose $\nu$ to be so generic
that $X^{\nu({\mathbb C}^\times)}=X^{{\mathbf T}}$ as schemes. Then the algebra $C_\nu({\mathbf A})$ is finite dimensional and carries a trivial action of ${\mathbf T}$. In this case, it is shown in \cite[Lemma~4.1]{L4} that the category $\mathcal{O}$ is artinian with finitely many simple objects and enough projectives and each object has finite dimensional generalized eigenspaces of ${\mathbf{h}}$. Also $\mathcal{O}$ has an important class of objects called Verma modules. Namely, given an (irreducible) $C_\nu({\mathbf A})$-module $\tau$, the Verma module~$\Delta(\tau)$ with lowest weight $\tau$ is, by definition, generated by a copy of $\tau$ (regarded as an ${\mathbf A}(0)$-module) with defining relations $av=0$ for $a\in {\mathbf A}^{<0}$, $v\in \tau$.

Let us equip $\tau$ with the trivial action of ${\mathbf T}$. This defines an action of ${\mathbf T}$ on $\Delta(\tau)$ compatible with the ${\mathbf A}$-action. Define the {\it reduced character} of $\Delta(\tau)$ by the formula
\begin{gather*}
{\rm Ch}_\tau({\mathbf a},g,t)=\operatorname{Tr}_{\Delta(\tau)}({\mathbf a} g\nu(t)):=\sum_{m\ge 0}\operatorname{Tr}_{\Delta(\tau)[m]}({\mathbf a} g)t^m\in {\mathbb C}[[t]],
\end{gather*}
where ${\mathbf a}\in {\mathbf A}(0)$ and $g\in {\mathbf T}$. By looking at the constant terms of these series, it is clear that these characters for different irreducible representations $\tau$ are linearly independent over ${\mathbb C}((t))$ (for each $g\in {\mathbf T}$).

It is clear that ${\rm Ch}_\tau(?,g,t)$ is a $g\nu(t)$-twisted trace on the algebra ${\mathbf A}^t:={\mathbf A}\otimes {\mathbb C}((t))$. Thus we obtain

\begin{Proposition} We have a ${\mathbb C}((t))$-linear injection
\begin{gather*}
{\rm Ch}\colon \ K_0(C_\nu({\mathbf A}))((t))\hookrightarrow HH_0\big({\mathbf A}^t,{\mathbf A}^t g\nu(t)\big)^*.
\end{gather*}
\end{Proposition}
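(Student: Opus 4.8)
The plan is to verify that the reduced characters $\mathrm{Ch}_\tau(?,g,t)$ do in fact land in $HH_0(\mathbf{A}^t,\mathbf{A}^t g\nu(t))^*$ and then to deduce injectivity of the assembled map. First I would check the twisted trace identity: for $\mathbf{a},\mathbf{b}\in\mathbf{A}(0)$ one has $\mathrm{Ch}_\tau(\mathbf{ab},g,t)=\mathrm{Ch}_\tau(\mathbf{b}\,g\nu(t)(\mathbf{a}),g,t)$, and more generally for arbitrary $\mathbf{a},\mathbf{b}\in\mathbf{A}^t$ the graded pieces $\Delta(\tau)[m]$ are finite-dimensional (by \cite[Lemma~4.1]{L4}), so $\mathrm{Tr}_{\Delta(\tau)[m]}$ is a genuine trace and the cyclicity $\mathrm{Tr}(\mathbf{ab}\,h)=\mathrm{Tr}(\mathbf{b}\,h\,h^{-1}\mathbf{a}h)$ on each finite-dimensional piece gives the $h$-twisted trace property with $h=g\nu(t)$, after summing over $m$ with the convergence provided by the $t^m$ weighting. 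This is the content already asserted in the paragraph preceding the proposition, so the map $\mathrm{Ch}\colon K_0(C_\nu(\mathbf{A}))((t))\to HH_0(\mathbf{A}^t,\mathbf{A}^tg\nu(t))^*$ is well-defined and $\mathbb{C}((t))$-linear once we note $K_0(C_\nu(\mathbf{A}))$ is free of finite rank over $\mathbb Z$ with basis the classes $[\tau]$ of irreducible $C_\nu(\mathbf{A})$-modules (finite dimensionality of $C_\nu(\mathbf{A})$ comes from the genericity assumption $X^{\nu(\mathbb{C}^\times)}=X^{\mathbf{T}}=\{0\}$).

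For injectivity, the key observation is the one already flagged in the excerpt: looking at \emph{constant terms}. Concretely, suppose $\sum_\tau c_\tau(t)\,\mathrm{Ch}_\tau(?,g,t)=0$ with $c_\tau(t)\in\mathbb{C}((t))$ not all zero; after clearing denominators and dividing by a common power of $t$ we may assume all $c_\tau\in\mathbb{C}[[t]]$ with at least one $c_\tau(0)\neq0$. Now evaluate on $\mathbf{a}\in\mathbf{A}(0)$ and extract the coefficient of $t^0$ in the resulting power series: the $t^0$-part of $\mathrm{Ch}_\tau(\mathbf{a},g,t)$ is $\mathrm{Tr}_{\Delta(\tau)[0]}(\mathbf{a}g)=\mathrm{Tr}_\tau(\bar{\mathbf{a}}g)$, i.e.\ the character of the $C_\nu(\mathbf{A})$-module $\tau$ (with $\mathbf{T}$ acting trivially, so $g$ acts trivially too, giving just $\mathrm{Tr}_\tau(\bar{\mathbf{a}})$) — here $\bar{\mathbf{a}}$ is the image of $\mathbf{a}$ in $C_\nu(\mathbf{A})$. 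Thus the $t^0$-coefficient of the vanishing sum gives $\sum_\tau c_\tau(0)\,\mathrm{Tr}_\tau(\bar{\mathbf{a}})=0$ for all $\mathbf{a}$, hence $\sum_\tau c_\tau(0)\,\chi_\tau=0$ as a linear functional on $C_\nu(\mathbf{A})$, where $\chi_\tau$ is the character of the irreducible module $\tau$. By linear independence of characters of distinct irreducible modules of a finite-dimensional algebra, $c_\tau(0)=0$ for all $\tau$, contradicting our normalization. Hence $\mathrm{Ch}$ is injective.

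I would carry out the steps in this order: (1) recall from \cite[Lemma~4.1]{L4} that under the standing genericity hypothesis $C_\nu(\mathbf{A})$ is finite-dimensional and the $\mathbf{h}$-graded pieces $\Delta(\tau)[m]$ of Verma modules are finite-dimensional, so $K_0(C_\nu(\mathbf{A}))$ has finite rank and the trace expressions make sense; (2) verify the $g\nu(t)$-twisted trace property of $\mathrm{Ch}_\tau(?,g,t)$ degree by degree using finite-dimensional cyclicity and reassemble over $m$; (3) check $\mathbb{C}((t))$-linearity of the assignment $[\tau]\mapsto\mathrm{Ch}_\tau$, which is immediate; (4) prove injectivity via the constant-term argument above, reducing to linear independence of irreducible characters of the finite-dimensional algebra $C_\nu(\mathbf{A})$. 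The main obstacle — really the only non-formal point — is ensuring all the traces are well-defined, i.e.\ that each weight space $\Delta(\tau)[m]$ is finite-dimensional and that the resulting series lies in $\mathbb{C}[[t]]$; this is exactly where the genericity of $\nu$ (making $C_\nu(\mathbf{A})$ finite-dimensional, via $X^{\nu(\mathbb{C}^\times)}=\{0\}$) and the structure theory of category $\mathcal{O}$ from \cite{BLPW,L4} are used. Everything else is a routine unwinding of definitions, with the punchline being the classical linear independence of characters.
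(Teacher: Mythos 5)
Your proof is correct and follows exactly the route the paper sketches: the two sentences immediately preceding the Proposition assert (without details) that $\mathrm{Ch}_\tau(?,g,t)$ is a $g\nu(t)$-twisted trace on $\mathbf{A}^t$ and that the characters for distinct $\tau$ are linearly independent ``by looking at the constant terms,'' and your write-up supplies precisely the verification of these two points --- the graded cyclicity argument for the twisted-trace identity, and the reduction to linear independence of irreducible characters of the finite-dimensional algebra $C_\nu(\mathbf{A})$ via extraction of the $t^0$-coefficient (using $\Delta(\tau)[0]\cong\tau$ with the $C_\nu(\mathbf{A})$-action and the trivial $\mathbf{T}$-action on $\tau$). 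This is the paper's argument made explicit rather than a different one.
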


\subsubsection{Differential equations for reduced characters}

Let ${\mathbf b}_i$, $i=1,\dots,r$ be a collection of elements of ${\mathbf A}(0)$ such that
any element ${\mathbf a}\in F_k{\mathbf A}(0)$ can be written as
\begin{gather*}
{\mathbf a}=\sum_i \lambda_i{\mathbf b}_i+\sum_j {\mathbf c}_j^-{\mathbf c}_j^+, \qquad \lambda_i\in {\mathbb C},
\end{gather*}
where ${\mathbf c}_j^\pm\in {\mathbf A}(\pm m_j)$ for some $m_j>0$, so that $\deg({\mathbf c}_j^+)+\deg({\mathbf c}_j^-)\le k$. Such a collection exists since the only fixed point of $\nu({\mathbb C}^\times)$ on $X$ is the origin. Moreover, we may assume that ${\mathbf c}_j^\pm$ have weights
$\pm \mu_j$ under ${\mathbf T}$.

Let $T$ be a $g\nu(t)$-twisted trace on ${\mathbf A}^t$. Then we get
\begin{gather*}
T({\mathbf a})=\sum_i \lambda_iT({\mathbf b}_i)+\sum_j T({\mathbf c}_j^-{\mathbf c}_j^+).
\end{gather*}
On the other hand,
\begin{gather*}
T({\mathbf c}_j^-{\mathbf c}_j^+)=T([{\mathbf c}_j^-,{\mathbf c}_j^+])+T({\mathbf c}_j^+{\mathbf c}_j^-)=
T([{\mathbf c}_j^-,{\mathbf c}_j^+])+\mu_j(g\nu(t))T({\mathbf c}_j^-{\mathbf c}_j^+),
\end{gather*}
which implies that
\begin{gather*}
T({\mathbf c}_j^-{\mathbf c}_j^+)=\frac{T([{\mathbf c}_j^-,{\mathbf c}_j^+])}{1-\mu_j(g\nu(t))}.
\end{gather*}
Thus we get
\begin{gather*}
T({\mathbf a})=\sum_i \lambda_iT({\mathbf b}_i)+\sum_j \frac{T([{\mathbf c}_j^-,{\mathbf c}_j^+])}{1-\mu_j(g\nu(t))}.
\end{gather*}
But the element $[{\mathbf c}_j^-,{\mathbf c}_j^+]$ has filtration degree $\le k-2$. Thus, continuing this process,
we obtain the following lemma.

Let $R\subset {\mathbb C}({\mathbf T})$ be the subalgebra generated by elements of the form $\frac{1}{1-\mu}$, where $\mu$ is a~character of ${\mathbf T}$ occurring in ${\mathbf A}(m)$ for some $m>0$.

\begin{Lemma}\label{normfor}
For any ${\mathbf a}\in {\mathbf A}$ there exist $p_i\in R$, $i=1,\dots,r$ $($independent of~$T)$ such that
\begin{gather*}
T({\mathbf a})=\sum_i p_i(g\nu(t))T({\mathbf b}_i).
\end{gather*}
\end{Lemma}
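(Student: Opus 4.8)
The plan is to make the informal computation preceding the lemma into a clean induction on the filtration degree of $\mathbf a$. The key point is that the reduction step performed in the paragraph above the lemma does two things simultaneously: it expresses $T(\mathbf a)$ as a combination of the $T(\mathbf b_i)$ plus a combination of $T$-values of elements of strictly smaller filtration degree, and the coefficients appearing in this combination lie in $R$, the subalgebra of $\mathbb C(\mathbf T)$ generated by the $\frac{1}{1-\mu}$ with $\mu$ a $\mathbf T$-character occurring in $\mathbf A(m)$ for some $m>0$. Since $R$ is a ring (in particular closed under addition and multiplication) and contains the scalars $\lambda_i\in\mathbb C$, composing finitely many such reduction steps keeps all coefficients in $R$.

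Concretely, I would argue as follows. First reduce to $\mathbf a\in\mathbf A(0)$: since $T$ is a $g\nu(t)$-twisted trace and $g\nu(t)\in\mathbf T$ acts on $\mathbf A(m)$ by a scalar (namely by the corresponding character evaluated at $g\nu(t)$), for $\mathbf a\in\mathbf A(m)$ with $m\ne 0$ the $\mathbf T$-invariance of $T$ (which follows from $g\nu(t)$-twisted trace property by taking $\mathbf b=1$) forces $T(\mathbf a)$ to be a scalar multiple of itself by $\mu(g\nu(t))\ne 1$, hence $T(\mathbf a)=0$, and one takes all $p_i=0$; more carefully, one uses that $\mathbf A(m)$ for $m\ne 0$ is spanned by commutators or is handled by the same weight argument, so its contribution vanishes. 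Then for $\mathbf a\in F_k\mathbf A(0)$ we induct on $k$. The base case $k\le 0$ is clear since $F_0\mathbf A=\mathbb C$, so $\mathbf a$ is scalar and $T(\mathbf a)=\mathbf a\cdot T(1)$; one may assume $1$ is among the $\mathbf b_i$ (or simply absorb the scalar). For the inductive step, write $\mathbf a=\sum_i\lambda_i\mathbf b_i+\sum_j\mathbf c_j^-\mathbf c_j^+$ as in the hypothesis, apply the identity
\begin{gather*}
T(\mathbf c_j^-\mathbf c_j^+)=\frac{T([\mathbf c_j^-,\mathbf c_j^+])}{1-\mu_j(g\nu(t))},
\end{gather*}
note that $\frac{1}{1-\mu_j}\in R$ by definition of $R$ (since $\mathbf c_j^+\in\mathbf A(m_j)$ with $m_j>0$ has $\mathbf T$-weight $\mu_j$, so $\mu_j$ occurs in $\mathbf A(m_j)$), and observe that $[\mathbf c_j^-,\mathbf c_j^+]\in F_{k-2}\mathbf A(0)$. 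By the induction hypothesis there are $q_{ji}\in R$ (independent of $T$) with $T([\mathbf c_j^-,\mathbf c_j^+])=\sum_i q_{ji}(g\nu(t))T(\mathbf b_i)$, and then
\begin{gather*}
T(\mathbf a)=\sum_i\Bigl(\lambda_i+\sum_j\frac{q_{ji}}{1-\mu_j(g\nu(t))}\Bigr)T(\mathbf b_i),
\end{gather*}
so $p_i:=\lambda_i+\sum_j\frac{q_{ji}}{1-\mu_j}\in R$ works, and these $p_i$ depend only on $\mathbf a$ and the chosen structure constants, not on $T$.

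The main obstacle is purely bookkeeping: one must check that the decomposition $\mathbf a=\sum_i\lambda_i\mathbf b_i+\sum_j\mathbf c_j^-\mathbf c_j^+$ can be chosen once and for all (i.e. the structure constants $\lambda_i$, the elements $\mathbf c_j^\pm$, and hence the characters $\mu_j$, can be fixed independently of $T$), which is exactly what is asserted by the displayed hypothesis preceding the lemma, the existence of which is guaranteed because the only fixed point of $\nu(\mathbb C^\times)$ on $X$ is the origin (so the augmentation ideal of $A(0)=\mathcal O(X^{\nu(\mathbb C^\times)})=\mathbb C$ is generated appropriately). Once this uniformity is granted, the induction is immediate; the only subtlety worth a line is making sure the recursion terminates, which it does since $[\mathbf c_j^-,\mathbf c_j^+]$ has filtration degree at most $k-2<k$ by $[F_p\mathbf A,F_q\mathbf A]\subset F_{p+q-2}\mathbf A$.
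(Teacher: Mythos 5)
Your proposal is correct and follows essentially the same route as the paper: it formalizes the reduction step displayed just before the lemma (using the twisted-trace identity $T(\mathbf c_j^-\mathbf c_j^+)=T([\mathbf c_j^-,\mathbf c_j^+])/(1-\mu_j(g\nu(t)))$ together with $[F_p\mathbf A,F_q\mathbf A]\subset F_{p+q-2}\mathbf A$) into an induction on filtration degree, which is exactly what the paper's ``continuing this process'' is shorthand for. The preliminary reduction to $\mathbf a\in\mathbf A(0)$ is a useful explicit addition (one should decompose $\mathbf A(m)$ into $\mathbf T$-weight spaces to get the scalar $\mu(g\nu(t))\ne 1$, but that is a cosmetic point); otherwise the argument matches the paper's.
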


\begin{Corollary} \label{normfor1} Let $h_\tau$ be the eigenvalue of ${\mathbf{h}}$ on $\tau\subset \Delta(\tau)$. There exist $p_{ij}\in R$ independent of~$\tau$ such that the power series ${\rm Ch}_\tau({\mathbf b}_i,g,t)$ satisfy the system of linear differential equations
\begin{gather*}
t\frac{{\rm d}}{{\rm d}t}{\rm Ch}_\tau({\mathbf b}_i,g,t)=\sum_{j=1}^r \big(p_{ij}(g\nu(t))-h_\tau \delta_{ij}\big){\rm Ch}_\tau({\mathbf b}_j,g,t).
\end{gather*}
In particular, these power series converge in some neighborhood of $0$ and extend to $($a~priori, possibly multivalued$)$ analytic functions outside finitely many points $t={\rm e}^{2\pi ij/m}\mu(g)^{1/m}$ where $\mu$ is a ${\mathbf T}$-weight occurring in ${\mathbf A}(m)$, $m>0$.
\end{Corollary}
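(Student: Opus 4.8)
The plan is to differentiate the reduced character as a power series in $t$ and to recognize the result, via the $\mathbf h$-grading on $\Delta(\tau)$, as another reduced character to which the already-established Lemma~\ref{normfor} applies; the differential system then drops out by pure bookkeeping, and convergence and continuation follow from the classical theory of Fuchsian systems.

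First I would fix an irreducible $C_\nu(\mathbf A)$-module $\tau$ and record the differentiation identity. Since $\mathbf h\in\mathbf A(0)$ is central in $\mathbf A(0)$, its image in $C_\nu(\mathbf A)$ acts on $\tau$ by the scalar $h_\tau$, and since $[\mathbf h,\mathbf a]=m\mathbf a$ for $\mathbf a\in\mathbf A(m)$, the operator $\mathbf h$ acts on $\Delta(\tau)[m]$ by the scalar $h_\tau+m$. Hence for any $\mathbf a\in\mathbf A(0)$ (which preserves each $\Delta(\tau)[m]$) one has
\[
t\frac{{\rm d}}{{\rm d}t}{\rm Ch}_\tau(\mathbf a,g,t)=\sum_{m\ge 0}m\operatorname{Tr}_{\Delta(\tau)[m]}(\mathbf a g)\,t^m={\rm Ch}_\tau(\mathbf h\mathbf a,g,t)-h_\tau\,{\rm Ch}_\tau(\mathbf a,g,t),
\]
using that $(\mathbf h-h_\tau)$ is the scalar $m$ on $\Delta(\tau)[m]$. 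Note $\mathbf h\mathbf a\in\mathbf A(0)$ since $[\mathbf h,\mathbf a]=0$, so the right-hand side is legitimate; I would apply this with $\mathbf a=\mathbf b_i$.

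Next I would feed $\mathbf h\mathbf b_i\in\mathbf A(0)$ into Lemma~\ref{normfor}. Since ${\rm Ch}_\tau(?,g,t)$ is a $g\nu(t)$-twisted trace on $\mathbf A^t=\mathbf A\otimes\mathbb C((t))$, that lemma supplies elements $p_{ij}\in R$, crucially \emph{independent of the trace}, hence independent of $\tau$, with ${\rm Ch}_\tau(\mathbf h\mathbf b_i,g,t)=\sum_j p_{ij}(g\nu(t)){\rm Ch}_\tau(\mathbf b_j,g,t)$; substituting into the displayed identity gives exactly the asserted system. To locate the singularities, recall that $R$ is generated by the functions $\frac1{1-\mu}$ for $\mathbf T$-weights $\mu$ occurring in $\mathbf A(m)$ with $m>0$, and that on such $\mathbf A(m)$ one has $\mu(g\nu(t))=\mu(g)t^{m}$; hence each $p_{ij}(g\nu(t))$ is a rational function of $t$, holomorphic at $t=0$ (the value of $\frac1{1-\mu(g)t^m}$ at $0$ is $1$), whose poles lie among the finitely many points $t={\rm e}^{2\pi ij/m}\mu(g)^{1/m}$ named in the statement.

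Finally I would invoke the standard theory of a linear system $t\frac{{\rm d}}{{\rm d}t}\Phi=M(t)\Phi$ with $M$ a matrix of functions holomorphic at $t=0$: each ${\rm Ch}_\tau(\mathbf b_i,g,t)$ lies in $\mathbb C[[t]]$ (the generalized $\mathbf h$-eigenspaces $\Delta(\tau)[m]$ are finite-dimensional by \cite[Lemma~4.1]{L4}) and satisfies the system formally, so it converges in a neighborhood of $0$ by the usual majorant estimate (using that $nI-M(0)$ is invertible for all large $n$), and then continues along any path in $\mathbb C^\times$ avoiding the poles of $M$ to a possibly multivalued analytic function. I expect the main obstacle to be not conceptual but a matter of pinning down the grading conventions so that the differentiation identity holds exactly, together with confirming that ${\rm Ch}_\tau(?,g,t)$ is genuinely a $g\nu(t)$-twisted trace over $\mathbb C((t))$; the substantive input -- the trace-independence of the coefficients in Lemma~\ref{normfor}, which forces the $p_{ij}$ to be independent of $\tau$ -- is already available, so this corollary is comparatively soft.
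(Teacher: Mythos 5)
Your proposal is correct and follows the paper's own argument exactly: the paper likewise notes $t\frac{\rm d}{{\rm d}t}{\rm Ch}_\tau({\mathbf b}_i,g,t)={\rm Ch}_\tau(({\mathbf h}-h_\tau){\mathbf b}_i,g,t)$ and then invokes Lemma~\ref{normfor} applied to ${\mathbf h}{\mathbf b}_i$, leaving the convergence and continuation as implicit consequences of the resulting Fuchsian system. Your write-up merely spells out the details the paper leaves to the reader (that ${\mathbf h}$ is central in ${\mathbf A}(0)$ and acts by $h_\tau+m$ on $\Delta(\tau)[m]$, the location of the poles of $R$ evaluated at $g\nu(t)$, and the standard majorant estimate at the regular singular point $t=0$), all of which are accurate.
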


\begin{proof} We have
\begin{gather*}
t\frac{{\rm d}}{{\rm d}t}{\rm Ch}_\tau({\mathbf b}_i,g,t)={\rm Ch}_\tau(({\mathbf{h}}-h_\tau){\mathbf b}_i,g,t).
\end{gather*}
By Lemma~\ref{normfor}, there exist $p_{ij}\in R$ such that
\begin{gather*}
{\rm Ch}_\tau({\mathbf{h}}{\mathbf b}_i,g,t)=\sum_j p_{ij}(g\nu(t)) {\rm Ch}_\tau({\mathbf b}_j,g,t).
\end{gather*}
This implies the statement.
\end{proof}

In particular, for Weil generic $g$ the reduced character ${\rm Ch}_\tau({\mathbf b}_i,g,t)$ can be evaluated at $t=1$, giving a~$g$-twisted trace on ${\mathbf A}$. Thus we obtain an injective map
\begin{gather*}
{\rm Ch}|_{t=1}\colon \ K_0(C_\nu({\mathbf A}))\hookrightarrow HH_0({\mathbf A},{\mathbf A} g)^*.
\end{gather*}
for Weil generic $g\in {\mathbf T}$. We will see that this map is often an isomorphism, in which case we get a construction of all $g$-twisted traces via representation theory.

\subsubsection{Rationality of reduced characters}

\begin{Theorem}\label{ratfun} The series ${\rm Ch}_\tau({\mathbf a},g,t)$ converges to a rational function of $g\nu(t)$ with numerator in ${\mathbb C}[{\mathbf T}]$ and denominator a finite product of binomials of the form $1-\mu$, where $\mu$ is a character of ${\mathbf T}$ occurring in ${\mathbf A}(m)$ for some $m>0$.
\end{Theorem}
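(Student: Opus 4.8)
The plan is to reduce the statement to the finitely many ``base'' elements $\mathbf b_1,\dots,\mathbf b_r$ of Lemma~\ref{normfor}, dispose of the unit among them by a Hilbert--Serre argument on the attracting cone of $X$, and treat the remaining ones via the Fuchsian system of Corollary~\ref{normfor1} together with a polynomial growth estimate.

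First I would reduce. The reduced character ${\rm Ch}_\tau(\mathbf a,g,t)$ is defined for $\mathbf a\in{\mathbf A}(0)$, and since only the ${\mathbf T}$-weight-zero part of such an element can contribute to a trace on a ${\mathbf T}$-module, we may assume $\mathbf a\in{\mathbf A}^{\mathbf T}$. Applying Lemma~\ref{normfor} to the $g\nu(t)$-twisted trace $T={\rm Ch}_\tau(\,\cdot\,,g,t)$ on ${\mathbf A}^t$ gives ${\rm Ch}_\tau(\mathbf a,g,t)=\sum_i p_i(g\nu(t)){\rm Ch}_\tau(\mathbf b_i,g,t)$ with $p_i\in R$; by construction the denominators occurring in $R$ are products of binomials $1-\mu$ with $\mu$ a character of ${\mathbf T}$ occurring in ${\mathbf A}(m)$ for some $m>0$, precisely the shape allowed in the theorem. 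Hence it suffices to prove the theorem for the finitely many $\mathbf b_i$, which we take to be lifts of a basis of $C_\nu({\mathbf A})$ containing the class of $\mathbf 1$.

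For $\mathbf b_i=\mathbf 1$ the reduced character is the graded dimension of $\Delta(\tau)$ twisted by $g$, i.e.\ the value at $g\nu(t)$ of the ${\mathbf T}$-equivariant Hilbert series $\sum_\chi(\dim\Delta(\tau)_\chi)[\chi]$. Filtering $\Delta(\tau)$ by the order filtration of ${\mathbf A}$, its associated graded is a finitely generated ${\mathbf T}$-equivariant graded module over $\mathcal O(X^{\ge0})$, where $X^{\ge0}$ is the attracting cone of $X$ for $\nu$; the ring $\mathcal O(X^{\ge0})$ is non-negatively $\nu$-graded with finite-dimensional weight-zero part $\mathcal O(X^\nu)$ (this is the standing hypothesis) and is module-finite over the polynomial ring on its finitely many positive-$\nu$-weight generators, whose ${\mathbf T}$-weights occur in the ${\mathbf A}(m)$, $m>0$. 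So $\operatorname{gr}\Delta(\tau)$ is module-finite over that polynomial ring, and the Hilbert--Serre theorem expresses its ${\mathbf T}$-equivariant Hilbert series --- which equals that of $\Delta(\tau)$ itself --- as a rational function with numerator in $\mathbb C[{\mathbf T}]$ and denominator a product of the admissible binomials. Specializing at $g\nu(t)$ gives the theorem for $\mathbf b_i=\mathbf 1$.

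The main obstacle is the remaining $\mathbf b_i$, which the reduction of Lemma~\ref{normfor} does not dispose of since it only lowers the order filtration, whereas a basis of $C_\nu({\mathbf A})$ may have representatives of arbitrarily high order. For these I would use the Fuchsian system of Corollary~\ref{normfor1}: clearing the common denominator $q$ (the product of the admissible binomials $1-\mu$) turns it into a recursion expressing $m\,Y_m$, where $Y_m$ is the vector of $t^m$-coefficients of $\big({\rm Ch}_\tau(\mathbf b_i,g,t)\big)_i$, as a $\mathbb C[{\mathbf T}]$-linear combination --- with coefficients affine-linear in $m$ --- of $Y_{m-1},\dots,Y_{m-D}$, $D=\deg q$. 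To upgrade ``Fuchsian'' to ``rational'' it then suffices to show that $\operatorname{Tr}_{\Delta(\tau)[m]}(\mathbf b_i g)$ grows at most polynomially in $m$: $\dim\Delta(\tau)[m]$ is polynomially bounded by finite generation of $\Delta(\tau)$, and the operator norm of the fixed element $\mathbf b_i$ on the finite-dimensional space $\Delta(\tau)[m]$ is polynomially bounded in $m$ by an estimate on the order filtration. Polynomial growth yields a positive radius of convergence, and together with the facts that the only possible singularities of the analytic continuation lie at the admissible binomial loci and are regular singular, and that the local exponents there are independent of $g$ (being determined by $h_\tau$ and the filtration of $\Delta(\tau)$, so that no logarithms or fractional powers arise), this forces $q(g\nu(t)){\rm Ch}_\tau(\mathbf b_i,g,t)$ to be, for each fixed generic $g$, a polynomial in $t$ of bounded degree; its coefficients depend regularly on $g$ by the recursion, so it is a regular function on ${\mathbf T}$, which is the assertion. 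An alternative to this analytic step would be to prove directly that the reduced characters of all the Vermas $\Delta(\tau')$ span $HH_0\big({\mathbf A}^t,{\mathbf A}^t g\nu(t)\big)^*$ over $\mathbb C((t))$ (a dimension count in the spirit of Proposition~\ref{finde2}), and then to match leading $t$-coefficients with the classes $[\tau']\in K_0(C_\nu({\mathbf A}))$ to conclude that this basis is defined over $\mathbb C(t)$.
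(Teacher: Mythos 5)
Your overall scaffolding (reduce to the $\mathbf b_i$ via Lemma~\ref{normfor}, handle $\mathbf b_i=\mathbf 1$ by Hilbert--Serre on $\operatorname{gr}\Delta(\tau)$, then argue with the Fuchsian system of Corollary~\ref{normfor1} for the remaining $\mathbf b_i$) matches the paper up to the last step, and the Hilbert--Serre observation for $\mathbf a=\mathbf 1$ is exactly the remark the authors make after the proof. But the last step is where the paper does something entirely different, and I think your version of it has a genuine gap.

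The crux of your argument is the sentence asserting that ``the only possible singularities\dots are regular singular, and \dots the local exponents there are independent of $g$\dots, so that no logarithms or fractional powers arise,'' and that this together with polynomial growth forces $q(g\nu(t)){\rm Ch}_\tau(\mathbf b_i,g,t)$ to be a polynomial in $t$. None of this is established, and the conclusion does not follow from a Fuchsian system plus convergence alone: generic Fuchsian systems with rational exponents have transcendental solutions (e.g.\ hypergeometric functions), and in the regular singular theory logarithms do appear whenever the residue matrix has resonant eigenvalues, which can certainly happen here since the residues of $p_{ij}(g\nu(t))$ at the zeros of $q$ depend on $g$ and are not controlled by $h_\tau$ and the filtration. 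To pass from ``convergent solution of a Fuchsian system'' to ``rational function,'' you need an extra input that rules out the transcendental branch; the paper supplies exactly this, via the Carlson--P\'olya theorem. Their route is: reduce to $g$ of finite order, descend the ground field to $\mathbb Q$ by a Galois trace argument, and prove (Proposition~\ref{pr1}) that $\Delta(\tau)$ admits a homogeneous basis in which each $\mathbf b_i$ acts by matrices with entries in $\frac1N\mathbb Z$, so that after rescaling the coefficient sequence of the reduced character is a vector of integers. Carlson--P\'olya (Corollary~\ref{c1}) then converts the Fuchsian ODE plus integrality into rationality; the general $g$ is recovered by a density argument bounding pole orders uniformly on a Zariski dense set of finite-order $g$. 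Your proposal has no analogue of the integrality step, and without it the analytic argument does not close. Your suggested alternative --- showing that the Verma characters span $HH_0\big({\mathbf A}^t,{\mathbf A}^tg\nu(t)\big)^*$ over $\mathbb C((t))$ and then matching leading $t$-coefficients --- would also need a separate proof; the paper only establishes such surjectivity in specific examples (Examples~\ref{exa1},~\ref{exa2} and the symplectic resolution case), not in the generality of the theorem.

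Two smaller points. First, a basis $\{\mathbf b_i\}$ of $C_\nu({\mathbf A})$ need not be liftable to ${\mathbf T}$-invariant elements (your reduction to $\mathbf a\in{\mathbf A}^{\mathbf T}$ is fine for the \emph{argument} of the trace, but the $\mathbf b_i$ produced by Lemma~\ref{normfor} are only in ${\mathbf A}(0)$). Second, the polynomial-growth estimate you invoke controls the radius of convergence but, when $g$ lies on the compact torus, the poles sit on $|t|=1$, so convergence for $|t|<1$ is the best you get --- this is precisely the borderline situation Carlson--P\'olya is designed to handle, and another hint that the arithmetic input is not dispensable.
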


\begin{proof} Recall first the classical theorem of Carlson and Polya \cite{C,P}.

\begin{Theorem}\label{t1} Let $f(z)=\sum\limits_{n=0}^\infty c_nz^n\in {\mathbb Z}[[z]]$. If this series converges for $|z|<1$ and analytically continues to any larger open set then $f$ is rational.
\end{Theorem}

Theorem~\ref{t1} immediately implies

\begin{Corollary}\label{c1} Let $f(z)=\sum\limits_{n=0}^\infty c_nz^n\in {\mathbb Z}[[z]]^m$. Suppose that
$f$ satisfies a linear ODE of the form
\begin{gather*}
f'(z)=A(z)f(z),
\end{gather*}
where $A(z)\in \operatorname{Mat}_m({\mathbb Q}(z))$ is regular for $|z|<1$. Then $f$ is rational.
\end{Corollary}

Let us now use Corollary~\ref{c1} to prove the rationality of ${\rm Ch}_\tau({\mathbf a},g,t)$ for $g$ of finite order~$r$. Clearly, it is enough to check the rationality of the series
\begin{gather*}
f_{j,\tau}({\mathbf a},g,t):=\sum_{l=0}^{r-1}{\rm e}^{2\pi ijl}{\rm Ch}_\tau\big({\mathbf a},g^l,t\big)
\end{gather*}
for all $j\in [0,r-1]$.

By standard abstract nonsense, it suffices to consider the case when the ground field is $\overline{{\mathbb Q}}$. Moreover, since ${\mathbf A}$ is finitely generated, we may assume that the ground field is some (Galois) number field $K$.

Furthermore, restricting scalars, we may (and will) assume that $K={\mathbb Q}$. Indeed, let ${\mathbf A}_{{\mathbb Q}}$
be the algebra ${\mathbf A}$ regarded as a ${\mathbb Q}$-algebra. Let $\Gamma={\rm Gal}(K/{\mathbb Q})$.
It is clear that
\begin{gather}\label{tenspr}
{\mathbf A}_{{\mathbb Q}}\otimes K=\oplus_{\gamma\in \Gamma}{\mathbf A}^\gamma,
\end{gather}
where ${\mathbf A}^\gamma$
is ${\mathbf A}$ twisted by $\gamma$. Let $f_{j,\tau}^{{\mathbb Q}}$ be the series $f_{j,\tau}$ computed for the algebra ${\mathbf A}_{{\mathbb Q}}$, i.e., over~${\mathbb Q}$. By~\eqref{tenspr}, we have
\begin{gather*}
f_{j,\tau}^{{\mathbb Q}}({\mathbf a},g,t)=\sum_{\gamma\in \Gamma}\gamma(f_{j,\tau}({\mathbf a},g,t))=\operatorname{Tr}_{K/{\mathbb Q}}(f_{j,\tau}({\mathbf a},g,t)).
\end{gather*}
Now let $c_i$ be a basis of $K$ over ${\mathbb Q}$ and $c_i^*$ the dual basis with respect to the form
$\operatorname{Tr}_{K/{\mathbb Q}}(xy)$. Then
\begin{gather*}
f_{j,\tau}({\mathbf a},g,t)=\sum_i c_i^*\operatorname{Tr}_{K/{\mathbb Q}}\big(c_if_{j,\tau}({\mathbf a},g,t)\big)=\sum_i c_i^*f_{j,\tau}^{{\mathbb Q}}(c_i{\mathbf a},g,t).
\end{gather*}
Thus, it suffices to prove the statement for ${\mathbf A}_{{\mathbb Q}}$, as claimed.

Using Lemma \ref{normfor}, Corollary~\ref{normfor1}, and Corollary~\ref{c1}, we see that rationality of $f_{j,\tau}({\mathbf a},g,t)$ follows from the following proposition, which shows that an integer multiple of $f_{j,\tau}({\mathbf a},g,t)$ has integer coefficients.

\begin{Proposition}\label{pr1} There exists a basis~$B$ of $\Delta(\tau)$ compatible with the grading such that for each ${\mathbf a}\in {\mathbf A}(0)$ there exists an integer $N_{{\mathbf a}}\ge 1$ for which ${\mathbf a} B\subset \frac{1}{N_{{\mathbf a}}}{\mathbb Z} B$.
\end{Proposition}

\begin{proof} We have $\Delta(\tau)={\mathbf A}^{\le 0}\tau$. Recall that the algebra ${\mathbf A}^{\le 0}$ is finitely generated. Let $x_1,\dots,x_r$ be homogeneous generators of ${\mathbf A}^{\le 0}$, where $x_i$ have negative degree for~\mbox{$i\le s$} and degree~$0$ for~\mbox{$i>s$}. Then $x_{s+1},\dots,x_r$ generate ${\mathbf A}(0)$. So it suffices to show that $N_{{\mathbf a}}$ exists for ${\mathbf a}=x_i$, $i>s$, i.e., that $x_{s+1},\dots,x_r$ can be renormalized so that there is a basis $B$ such that $x_iB\subset {\mathbb Z} B$ for~\mbox{$i>s$}.

Let $d_i$ be the filtration degrees of $x_i$. We have commutation relations
\begin{gather*}
[x_i,x_j]=P_{ij}(x_1,\dots,x_r),
\end{gather*}
where $P_{ij}$ is a noncommutative polynomial of degree $\le d_i+d_j-2$. Multiplying
$x_i$ by $N^{d_i}$ for a suitable integer $N\ge 1$, we can make sure that the coefficients of $P_{ij}$
are in ${\mathbb Z}$. Then any monomial in $x_i$ can be written as a ${\mathbb Z}$-linear combination of ordered monomials in which the index is nondecreasing from left to right.

Let $v_1,\dots,v_n$ be a basis of $\tau$. The elements $x_i$, $i>s$ act by some $n$ by $n$ matrices in this basis. By adjusting the integer $N$ introduced above, we may make sure that these matrices have entries in ${\mathbb Z}$.

Now let $\tau_{\rm int}=\oplus_i {\mathbb Z}v_i\subset \tau$, and $\Delta(\tau)_{\rm int}={\mathbb Z}\langle x_1,\dots,x_s\rangle \tau_{\rm int}$. Since homogeneous subspaces of~$\Delta(\tau)$ are finite dimensional in each degree, this group is finitely generated in each degree, hence free, and $\Delta(\tau)_{\rm int}\otimes_{{\mathbb Z}}{\mathbb Q}=\Delta(\tau)$.

We claim that $\Delta(\tau)_{\rm int}$ is invariant under $x_i$ for any $i>s$. Indeed, it suffices to show that
$x_ix_1^{m_1}\cdots x_s^{m_s}v_j\in \Delta(\tau)_{\rm int}$. As explained above, we can write
$x_ix_1^{m_1}\cdots x_s^{m_s}$ as a ${\mathbb Z}$-linear combination of ordered monomials,
so it suffices to show that $x_1^{n_1}\cdots x_r^{n_r}v_j\in \Delta(\tau)_{\rm int}$. But this follows
from the definition, since $x_{s+1}^{n_{s+1}}\cdots x_r^{n_r}v_j$ is an integer linear combination of~$v_k$.

Thus, any homogeneous basis $B$ of $\Delta(\tau)_{\rm int}$ has the required property $x_iB\subset {\mathbb Z} B$ for $i>s$.
\end{proof}

Thus, Theorem~\ref{ratfun} for $g$ of finite order is proved. Let us now prove Theorem~\ref{ratfun} in ge\-neral. Lemma~\ref{normfor} and Corollary~\ref{normfor1} imply that the orders of the poles of the rational function ${\rm Ch}_\tau({\mathbf a},g,t)$ for~$g$ of finite order are uniformly bounded on a Zariski dense set $Z\subset {\mathbf T}$ of such~$g$. Thus there exists a finite product $\Pi$ of binomials of the form $1-\mu$ such that $\Pi(g\nu(t)){\rm Ch}_\tau({\mathbf a},g,t)$ is a polynomial in $t$ of a fixed degree for any $g\in Z$. But this product belongs to ${\mathbb C}[{\mathbf T}][[t]]$. This implies that $\Pi(g\nu(t)){\rm Ch}_\tau({\mathbf a},g,t)\in {\mathbb C}[{\mathbf T}][t]$. Since this is a function
of $g\nu(t)$, it has the form $P(g\nu(t))$ for some $P\in {\mathbb C}[T]$, as claimed.
\end{proof}

\begin{Remark} \looseness=-1 For ${\mathbf a}=1$ Theorem \ref{ratfun} follows from the Hilbert syzygies theorem. Indeed, the commutative algebra $\operatorname{gr}({\mathbf A}^{\ge 0})$ is finitely generated by the argument in \cite[proof of Lemma~3.1.2]{GL}.
\end{Remark}

Theorem \ref{ratfun} implies that it makes sense to set the auxiliary variable $t$ to $1$ and define the reduced character
\begin{gather*}
{\rm Ch}_\tau({\mathbf a},g):=\operatorname{Tr}_{\Delta(\tau)}({\mathbf a}g)=\frac{P(g)}{\prod\limits_{j=1}^r(1-\mu_j(g))},
\end{gather*}
where $P\in {\mathbb C}[{\mathbf T}]$, which is a rational function on ${\mathbf T}$.

Moreover, let ${\mathbf{h}}_1,\dots,{\mathbf{h}}_k$ be a basis of $\operatorname{Lie}{\mathbf T}\subset {\mathbf A}$, such that a general element of ${\mathbf T}$ can be written as $g=\prod\limits_{i=1}^k t_i^{{\mathbf{h}}_i}$. Let $h_{i,\tau}$ be the eigenvalue of ${\mathbf{h}}_i$ on $\tau\subset \Delta(\tau)$. Then
\begin{gather*}
{\rm Ch}_\tau({\mathbf a},g)=\operatorname{Tr}_{\Delta(\tau)}\left({\mathbf a} \prod_{i=1}^k t_i^{{\mathbf{h}}_i-h_{i,\tau}}\right).
\end{gather*}
This shows that a more natural definition of character is
\begin{gather*}
\widetilde{\rm Ch}_\tau({\mathbf a},g):=\prod_{i=1}^k t_i^{h_{i,\tau}}\cdot {\rm Ch}_\tau({\mathbf a},g)=\operatorname{Tr}_{\Delta(\tau)}\left({\mathbf a} \prod_{i=1}^k t_i^{{\mathbf{h}}_i}\right),
\end{gather*}
even though this is now in general a multivalued, no longer rational, function on~${\mathbf T}$. We call this function the {\it non-reduced, or ordinary character} of $\Delta(\tau)$ (as it agrees with the usual notions of character for representations of semisimple Lie algebras and rational Cherednik algebras). Similarly to Corollary~\ref{normfor1}, one can show that the functions $\widetilde{\rm Ch}_\tau({\mathbf b}_i,g)$ satisfy a holonomic system of linear differential equations:{\samepage
\begin{gather*}
\partial_l\widetilde{\rm Ch}_\tau({\mathbf b}_i,g)=\sum_{j=1}^r p_{ijl}(g)\widetilde{\rm Ch}_\tau({\mathbf b}_j,g),\qquad l=1,\dots,k,
\end{gather*}
where $\partial_l$ is the derivative along ${\mathbf{h}}_l$ and $p_{ijl}\in R$ are independent of $\tau$.}

Furthermore, such a character can be defined for any object $M\in \mathcal O$:
\begin{gather*}
\widetilde{\rm Ch}(M)({\mathbf a},g)=\operatorname{Tr}_M\left(a\prod_{i=1}^k t_i^{{\mathbf{h}}_i}\right).
\end{gather*}
If $M=\sum_\tau m_\tau \Delta(\tau)$ in the Grothendieck group of~$\mathcal O$ (a decomposition which always exists and is unique) then $\widetilde{\rm Ch}(M)=\sum_\tau m_\tau \widetilde{\rm Ch}_\tau$.

\subsubsection{Examples}

\begin{Example}\label{exa1} Let $G$ be a finite group, $\h$ an irreducible finite dimensional representation of~$G$, $V=\h\oplus \h^*$ and $X=V/G$ (this is a special case of the setting of Section~\ref{ssra}, when $V=\h\oplus \h^*$). In this case the symplectic reflection algebra ${\mathbf A}_c$ quantizing $\mathcal O(X)$ is called the {\it spherical rational Cherednik algebra}. Let $\nu(t)$ act by $t^{-1}$ on $\h$ and by $t$ on $\h^*$. Then the only fixed point of $\nu(t)$ is $0\in X$, so the above analysis applies. Consider a spherical parameter~$c$, i.e., such that ${\mathbf{H}}_c{\mathbf{e}}{\mathbf{H}}_c={\mathbf{H}}_c$ (such parameters form a nonempty Zariski open set). Then the algebra $C_\nu({\mathbf A})$ is Morita equivalent to ${\mathbb C}G$, so its irreducible representations $\tau$ correspond to irreducible representations $\overline\tau$ of ${\mathbb C}G$, with $\Delta(\tau)={\mathbf{e}} M(\overline\tau)$, where $M(\overline\tau)$ is the Verma module over ${\mathbf{H}}_c$ with highest weight~$\tau$. This gives a direct proof of Theorem~\ref{ratfun} (rationality of reduced characters) for the algebra ${\mathbf A}_c$ (since a similar statement is easily proved for ${\mathbf{H}}_c$ using its triangular decomposition). Also, in this case an argument similar to the proof of \cite[Theorem~1.8]{EG} shows that for generic~$g$ the dimension of $HH_0({\mathbf A},{\mathbf A} g)$ equals the number of conjugacy classes in $G$, which implies that the map ${\rm Ch}|_{t=1}$ is an isomorphism. Thus, all $g$-twisted traces in this case are obtained from category $\mathcal{O}$. Also, by deforming from $c=0$ we see that these traces are Weil generically nondegenerate, i.e., give rise to nondegenerate short star-products. Thus such star-products for fixed~$c$ and~$g$ depend on $r-1$ parameters, where~$r$ is the number of conjugacy classes in~$G$.
\end{Example}

\begin{Example}\label{exa2} Let $X$ be the nilpotent cone of a simple Lie algebra~$\g$, and keep the notation of Section~\ref{maxquot}. Let $\chi$ be a regular central character.
Let $\nu(t)$ be defined by the condition that $\alpha(\nu(t))=t$ for all positive simple roots $\alpha$.
In this case $C_\nu({\mathbf A})={\rm Fun}(W)$,
the algebra of complex-valued functions on the Weyl group~$W$,  and the category $\mathcal{O}$ is the usual
BGG category $\mathcal{O}_\chi$ for $\g$.

Moreover, in this case we can take ${\mathbf b}_i$ to be polynomials on
$\h$, so ${\rm Ch}_\tau({\mathbf b}_i,g,t)$ are obtained by differentiating
the Hilbert series ${\rm Ch}_\tau(1,g,t)$ with respect to $g$, hence are rational functions of $g\nu(t)$. Thus, by
Lemma \ref{normfor}, ${\rm Ch}_\tau({\mathbf a},g,t)$ is a rational function for all ${\mathbf a}\in {\mathbf A}$ (giving a direct proof of Theorem \ref{ratfun} in this case). Finally, for generic $g\in {\mathbf T}$ the dimension of $HH_0({\mathbf A},{\mathbf A} g)$ equals $|W|$, as follows, for example, from~\cite{So}. Thus the map ${\rm Ch}|_{t=1}$ in this case is also an isomorphism, and we again obtain all twisted traces from category~$\mathcal{O}$. Finally, it follows from Section~\ref{maxquot} that Weil generically these traces are nondegenerate, hence define nondegenerate short star-products. So we see that nondegenerate short star-products (for fixed $\lambda$ and $g$) depend on $|W|-1$ parameters. Thus, the total number of parameters for such star-products (up to conjugation by the group $G$) is $2\operatorname{rank}(\g)+|W|-1$.

A similar analysis applies to the case when $X$ is a Richardson orbit in~$\g^*$, as in Remark~\ref{rich}. In this case, instead of Verma modules we should use parabolic Verma modules.
\end{Example}

\begin{Remark} Note that in Examples~\ref{exa1},~\ref{exa2} we have $s\in{\mathbf T}$, hence $s$ acts trivially on $HH_0({\mathbf A},{\mathbf A} g)$ and the $s$-invariance condition for twisted traces is vacuous.
\end{Remark}

\begin{Example} Let $X$ admit a symplectic resolution $\widetilde{X}$, and ${\mathbf T}$ act on $\widetilde{X}$ with $N<\infty$ fixed points, hence $X^{{\mathbf T}}=0$ (the situation of~\cite{BLPW}). For simplicity assume that $s\in {\mathbf T}$.
This includes Example~\ref{exa2} ($\widetilde X=T^*G/B$, the Springer resolution),
and also Example~\ref{exa1} for $G=S_n\ltimes \Gamma^n$ where $\Gamma\subset {\rm SL}_2({\mathbb C})$ is a finite subgroup. In this case, it follows from~\cite{BLPW} that for generic $\lambda$ the algebra $C_\nu({\mathbf A})$ is a commutative semisimple algebra of dimension $N$, and for generic $g$ we have $\dim HH_0({\mathbf A},{\mathbf A} g)=N$, so the map ${\rm Ch}|_{t=1}$ is an isomorphism. Thus, traces in representations from category $\mathcal{O}$ give rise to all the $g$-twisted traces.

Also by localizing to the fixed points it can be shown in this case that the Weil generic $g$-twisted trace is nondegenerate. Thus, in this case for generic~$\lambda$, $g$ we obtain a family of nondegenerate short star-products with $N-1$ parameters. This will be discussed in more detail in a subsequent paper.
\end{Example}

\subsection*{Acknowledgements} The first author is grateful to C.~Beem and A.~Kapustin for introducing him to the problem, M.~Kontsevich for contributing a key idea, and C.~Beem, M.~Dedushenko, D.~Gaiotto, D.~Kaledin, D.~Kazhdan, I.~Losev, G.~Lusztig, H.~Nakajima, E.~Rains, L.~Rastelli and T.~Schedler for useful discussions. The work of the first author was partially supported by the NSF grant DMS-1502244.
The work of the second author was supported by the MIT UROP (Undergraduate Research Opportunities Program).

\pdfbookmark[1]{References}{ref}
\LastPageEnding


\begin{thebibliography}{99}
\footnotesize\itemsep=0pt

\bibitem{BPR}
Beem C., Peelaers W., Rastelli L., Deformation quantization and superconformal
 symmetry in three dimensions, \href{https://doi.org/10.1007/s00220-017-2845-6}{\textit{Comm. Math. Phys.}} \textbf{354} (2017),
 345--392, \href{https://arxiv.org/abs/1601.05378}{arXiv:1601.05378}.

\bibitem{BS}
Bellamy G.,  Schedler T., Symplectic resolutions of quiver varieties and
 character varieties, \href{https://arxiv.org/abs/1602.00164}{arXiv:1602.00164}.

\bibitem{BLPW}
Braden T., Licata A., Proudfoot N., Webster B., Quantizations of conical
 symplectic resolutions {II}: category~{$\mathcal O$} and symplectic duality,
 \textit{Ast\'erisque} \textbf{384} (2016), 75--179, \href{https://arxiv.org/abs/1407.0964}{arXiv:1407.0964}.

\bibitem{Br}
Brylinski R., Instantons and {K}\"{a}hler geometry of nilpotent orbits, in
 Representation Theories and Algebraic Geometry ({M}ontreal, {PQ}, 1997),
 \textit{NATO Adv. Sci. Inst. Ser.~C Math. Phys. Sci.}, Vol.~514, \href{https://doi.org/10.1007/978-94-015-9131-7_3}{Kluwer Acad.
 Publ.}, Dordrecht, 1998, 85--125, \href{https://arxiv.org/abs/math.SG/9811032}{arXiv:math.SG/9811032}.

\bibitem{C}
Carlson F., \"{U}ber ganzwertige {F}unktionen, \href{https://doi.org/10.1007/BF01203188}{\textit{Math.~Z.}} \textbf{11} (1921), 1--23.

\bibitem{Sie}
de~Siebenthal J., Sur les groupes de {L}ie compacts non connexes,
 \href{https://doi.org/10.1007/BF02564352}{\textit{Comment. Math. Helv.}} \textbf{31} (1956), 41--89.

\bibitem{DFPY1}
Dedushenko M., Fan Y., Pufu S.S., Yacoby R., Coulomb branch operators and
 mirror symmetry in three dimensions, \href{https://doi.org/10.1007/jhep04(2018)037}{\textit{J.~High Energy Phys.}} \textbf{2018} (2018), no.~4, 037, 111~pages, \href{https://arxiv.org/abs/1712.09384}{arXiv:1712.09384}.

\bibitem{DFPY2}
Dedushenko M., Fan Y., Pufu S.S., Yacoby R., Coulomb branch quantization and
 abelianized monopole bubbling, \href{https://doi.org/10.1007/JHEP10(2019)179}{\textit{J.~High Energy Phys.}} \textbf{2019} (2019), no.~10, 179, 96~pages, \href{https://arxiv.org/abs/1812.08788}{arXiv:1812.08788}.

\bibitem{DPY}
Dedushenko M., Pufu S.S., Yacoby R., A one-dimensional theory for {H}iggs
 branch operators, \href{https://doi.org/10.1007/jhep03(2018)138}{\textit{J.~High Energy Phys.}} \textbf{2018} (2018), no.~3, 138, 83~pages, \href{https://arxiv.org/abs/1610.00740}{arXiv:1610.00740}.

\bibitem{EG}
Etingof P., Ginzburg V., Symplectic reflection algebras, {C}alogero--{M}oser
 space, and deformed {H}arish--{C}handra homomorphism, \href{https://doi.org/10.1007/s002220100171}{\textit{Invent. Math.}}
 \textbf{147} (2002), 243--348, \href{https://arxiv.org/abs/math.AG/0011114}{arXiv:math.AG/0011114}.

\bibitem{ES}
Etingof P., Schedler T., Poisson traces and {$D$}-modules on {P}oisson
 varieties, \href{https://doi.org/10.1007/s00039-010-0085-4}{\textit{Geom. Funct. Anal.}} \textbf{20} (2010), 958--987, \href{https://arxiv.org/abs/0908.3868}{arXiv:0908.3868}.

\bibitem{GNT}
Gaiotto D., Neitzke A., Tachikawa Y., Argyres--{S}eiberg duality and the
 {H}iggs branch, \href{https://doi.org/10.1007/s00220-009-0938-6}{\textit{Comm. Math. Phys.}} \textbf{294} (2010), 389--410, \href{https://arxiv.org/abs/0810.4541}{arXiv:0810.4541}.

\bibitem{G}
Ginzburg V., On primitive ideals, \href{https://doi.org/10.1007/s00029-003-0338-2}{\textit{Selecta Math. (N.S.)}} \textbf{9} (2003), 379--407, \href{https://arxiv.org/abs/math.RT/0202079}{arXiv:math.RT/0202079}.

\bibitem{GL}
Gordon I.G., Losev I., On category {$\mathcal{O}$} for cyclotomic rational
 {C}herednik algebras, \href{https://doi.org/10.4171/JEMS/454}{\textit{J.~Eur. Math. Soc. (JEMS)}} \textbf{16} (2014), 1017--1079, \href{https://arxiv.org/abs/1109.2315}{arXiv:1109.2315}.

\bibitem{HSS}
Herbig H.-C., Schwarz G.W., Seaton C., Symplectic quotients have symplectic
 singularities, \href{https://doi.org/10.1112/s0010437x19007784}{\textit{Compos. Math.}} \textbf{156} (2020), 613--646, \href{https://arxiv.org/abs/1706.02089}{arXiv:1706.02089}.

\bibitem{HKLR}
Hitchin N.J., Karlhede A., Lindstr\"om U., Ro\v{c}ek M., Hyper-{K}\"{a}hler
 metrics and supersymmetry, \href{https://doi.org/10.1007/BF01214418}{\textit{Comm. Math. Phys.}} \textbf{108} (1987), 535--589.

\bibitem{Ka}
Kaledin D., Symplectic singularities from the {P}oisson point of view,
 \href{https://doi.org/10.1515/CRELLE.2006.089}{\textit{J.~Reine Angew. Math.}} \textbf{600} (2006), 135--156,
 \href{https://arxiv.org/abs/math.AG/0310186}{arXiv:math.AG/0310186}.

\bibitem{KP}
Kraft H., Procesi C., On the geometry of conjugacy classes in classical groups,
 \href{https://doi.org/10.1007/BF02565876}{\textit{Comment. Math. Helv.}} \textbf{57} (1982), 539--602.

\bibitem{L2}
Losev I., Deformations of symplectic singularities and orbit method for
 semisimple {L}ie algebras, \href{https://arxiv.org/abs/1605.00592}{arXiv:1605.00592}.

\bibitem{L3}
Losev I., Bernstein inequality and holonomic modules, \href{https://doi.org/10.1016/j.aim.2016.12.033}{\textit{Adv. Math.}}
 \textbf{308} (2017), 941--963, \href{https://arxiv.org/abs/1501.01260}{arXiv:1501.01260}.

\bibitem{L4}
Losev I., On categories {$\mathcal O$} for quantized symplectic resolutions,
 \href{https://doi.org/10.1112/S0010437X17007382}{\textit{Compos. Math.}} \textbf{153} (2017), 2445--2481, \href{https://arxiv.org/abs/1502.00595}{arXiv:1502.00595}.

\bibitem{L1}
Losev I., Quantizations of regular functions on nilpotent orbits, \href{https://doi.org/10.21915/BIMAS.2018202}{\textit{Bull.
 Inst. Math. Acad. Sin.~(N.S.)}} \textbf{13} (2018), 199--225,
 \href{https://arxiv.org/abs/1505.08048}{arXiv:1505.08048}.

\bibitem{Lu}
Lusztig G., Character sheaves on disconnected groups.~{I}, \href{https://doi.org/10.1090/S1088-4165-03-00204-8}{\textit{Represent.
 Theory}} \textbf{7} (2003), 374--403, {E}rrata,
 \href{https://doi.org/10.1090/S1088-4165-04-00247-X}{\textit{Represent.
 Theory}} \textbf{8} (2004), 179, \href{https://arxiv.org/abs/math.RT/0305206}{arXiv:math.RT/0305206}.

\bibitem{Nak}
Nakajima H., Towards a mathematical definition of {C}oulomb branches of
 3-dimensional {${\mathcal N}=4$} gauge theories,~{I}, \href{https://doi.org/10.4310/ATMP.2016.v20.n3.a4}{\textit{Adv. Theor.
 Math. Phys.}} \textbf{20} (2016), 595--669, \href{https://arxiv.org/abs/1503.03676}{arXiv:1503.03676}.

\bibitem{Na3}
Namikawa Y., Poisson deformations of affine symplectic varieties, {II},
 \href{https://doi.org/10.1215/0023608X-2010-012}{\textit{Kyoto~J. Math.}} \textbf{50} (2010), 727--752, \href{https://arxiv.org/abs/0902.2832}{arXiv:0902.2832}.

\bibitem{Na1}
Namikawa Y., Fundamental groups of symplectic singularities, in Higher
 Dimensional Algebraic Geometry~-- in Honour of {P}rofessor {Y}ujiro
 {K}awamata's Sixtieth Birthday, \textit{Adv. Stud. Pure Math.}, Vol.~74,
 Editors K.~Oguiso, C.~Birkar, S.~Ishii, S.~Takayama, \href{https://doi.org/10.2969/aspm/07410321}{Math. Soc. Japan}, Tokyo,
 2017, 321--334, \href{https://arxiv.org/abs/1301.1008}{arXiv:1301.1008}.

\bibitem{Na2}
Namikawa Y., A characterization of nilpotent orbit closures among symplectic
 singularities, \href{https://doi.org/10.1007/s00208-017-1572-9}{\textit{Math. Ann.}} \textbf{370} (2018), 811--818, \href{https://arxiv.org/abs/1707.02825}{arXiv:1707.02825}.

\bibitem{P}
P\'olya G., \"Uber gewisse notwendige {D}eterminantenkriterien f\"ur die
 {F}ortsetzbarkeit einer {P}otenzreihe, \href{https://doi.org/10.1007/BF01459120}{\textit{Math. Ann.}} \textbf{99} (1928), 687--706.

\bibitem{So}
Soergel W., The {H}ochschild cohomology ring of regular maximal primitive
 quotients of enveloping algebras of semisimple {L}ie algebras, \href{https://doi.org/10.24033/asens.1747}{\textit{Ann.
 Sci. \'Ecole Norm. Sup.~(4)}} \textbf{29} (1996), 535--538.

\bibitem{Ste}
Steinberg R., Conjugacy classes in algebraic groups, \textit{Lecture Notes in
 Math.}, Vol.~366, \href{https://doi.org/10.1007/BFb0067854}{Springer-Verlag}, Berlin~-- New York, 1974.

\bibitem{Sw}
Swann A., Hyper-{K}\"ahler and quaternionic {K}\"ahler geometry, \href{https://doi.org/10.1007/BF01446581}{\textit{Math.
 Ann.}} \textbf{289} (1991), 421--450.

\end{thebibliography}
\end{document}